\title[Lectures on the Strominger system]{Lectures on the Strominger system}
\author[M. Garcia-Fernandez]{Mario Garcia-Fernandez}
\address{Instituto de Ciencias Matem\'aticas (CSIC-UAM-UC3M-UCM)\\
  Nicol\'as Cabrera 13--15, Cantoblanco\\ 28049 Madrid, Spain}
\email{mario.garcia@icmat.es}
\thanks{This project has received funding from the European Union's Horizon 2020 research and innovation programme under the Marie Sklodowska-Curie grant agreement No 655162.}
\theoremstyle{plain}
\newtheorem{theorem}{Theorem}[section]
\newtheorem{proposition}[theorem]{Proposition}
\newtheorem{conjecture}[theorem]{Conjecture}
\theoremstyle{definition}
\newtheorem{definition}[theorem]{Definition}
\newtheorem{definition-theorem}[theorem]{Definition-Theorem}
\newtheorem{example}[theorem]{Example}
\theoremstyle{remark}
\newtheorem{remark}[theorem]{Remark}
\newtheorem{question}[theorem]{Question}
\numberwithin{equation}{section} \setcounter{tocdepth}{1}
\newcommand{\tr}{\operatorname{tr}}
\newcommand{\Id}{\operatorname{Id}}
\newcommand{\End}{\operatorname{End}}
\newcommand{\Ker}{\operatorname{Ker}}
\newcommand{\ad}{\operatorname{ad}}
\newcommand{\Aut}{\operatorname{Aut}}
\newcommand{\dbar}{\bar{\partial}}
\newcommand{\CC}{{\mathbb C}}
\newcommand{\PP}{{\mathbb P}}
\newcommand{\RR}{{\mathbb R}}
\newcommand{\ZZ}{{\mathbb Z}}
\renewcommand{\(}{\left(}
\renewcommand{\)}{\right)}
\newcommand{\Vol}{\operatorname{Vol}}
\newcommand{\surj}{\to\kern-1.8ex\to}
\newcommand{\lto}{\longrightarrow}
\newcommand{\cA}{\mathcal{A}}
\newcommand{\cC}{\mathcal{C}}
\newcommand{\cE}{\mathcal{E}}
\newcommand{\cP}{\mathcal{P}}
\newcommand{\cF}{\mathcal{F}}
\newcommand{\cG}{\mathcal{G}}
\newcommand{\cL}{\mathcal{L}}
\newcommand{\cT}{{\mathcal{T}}}
\newcommand{\Lie}{\operatorname{Lie}}
\newcommand{\LieG}{\operatorname{Lie} \cG}
\def\om{\omega}
\def\Om{\Omega}
\def\Vol{\mathrm{Vol}}
\def\Lie{\mathrm{Lie}}
\def\Diff{\mathrm{Diff}}
\def\Id{\mathrm{Id}}
\def\cA{\mathcal{A}}
\def\cG{\mathcal{G}}
\def\cP{\mathcal{P}}
\def\cT{\mathcal{T}}
\def\cE{\mathcal{E}}
\newcommand{\SU}{\mathrm{SU}}
\newcommand{\SL}{\mathrm{SL}}
\begin{document}

\begin{abstract}
These notes give an introduction to the Strominger system of partial differential equations, and are based on lectures given in September 2015 at the GEOQUANT School, held at the Institute of Mathematical Sciences (ICMAT) in Madrid. We describe the links with the theory of balanced metrics in hermitian geometry, the Hermite-Yang-Mills equations, and its origins in physics, that we illustrate with many examples. We also cover some recent developments in the moduli problem and the interrelation of the Strominger system with generalized geometry, via the cohomological notion of string class.
\end{abstract}

\maketitle

\tableofcontents

\section{Introduction}

The Strominger system of partial differential equations has its origins in supergravity in physics \cite{HullTurin,Strom}, and it was first considered in the mathematics literature in a seminal paper by Li and Yau \cite{LiYau}. The mathematical study of this PDE has been proposed by Yau as a natural generalization of the Calabi problem for non-k\"ahlerian complex manifolds \cite{Yau2005}, and also in relation to 
\emph{Reid's fantasy} on the moduli space of projective Calabi-Yau threefolds \cite{Reid}. There is a conjectural relation between the Strominger system and conformal field theory, which arises in a certain physical limit in compactifications of the heterotic string theory.

In complex dimensions one and two, solutions of the Strominger system are given (after conformal re-scaling) by polystable holomorphic vector bundles and K\"ahler Ricci flat metrics. In dimension three, the existence and uniqueness problem for the Strominger system is still open, and it is the object of much current investigation (see Section \ref{sec:existence}). The existence of solutions has been conjectured by Yau under natural assumptions \cite{Yau2010} (see Conjecture \ref{conj:Yau}).

The main obstacle to prove the existence of solutions in complex dimension three and higher is an intricate equation for 4-forms 
\begin{equation}\label{eq:Bianchiintro}
dd^c\omega = \operatorname{tr} R_\nabla \wedge R_\nabla - \operatorname{tr} F_A \wedge F_A,
\end{equation}
coupling the K\"ahler form $\omega$ of a (conformally) balanced hermitian metric on a complex manifold $X$ with a pair of Hermite-Yang-Mills connections $\nabla$ and $A$.
This subtle condition -- which arises in the quantization of the sigma model for the heterotic string -- was studied by Freed \cite{Freed} and Witten \cite{WittenCMP} in the context of index theory for Dirac operators, and more recently it has appeared in the topological theory of \emph{string structures} \cite{Bunke,Redden,SSS} and in generalized geometry \cite{BarHek,GF,GRT}. Despite these important topological and geometric insights, to the present day we have a very poor understanding of equation \eqref{eq:Bianchiintro} from an analytical point of view.

In close relation to the existence problem, an important object in the theory of the Strominger system is the moduli space of solutions. The moduli problem for the Strominger system is largely unexplored, and only in recent years there has been progress in the understanding of its geometry \cite{AGS,CGT,OssaSvanes,GRT,GRT2}. From a physical perspective, it can be regarded as a first approximation to the moduli space of 2-dimensional (0,2)-superconformal field theories, and is expected to host a generalization of mirror symmetry \cite{Yau2005}.

\subsection*{Organization:}

These lecture notes intend to give an introduction to the theory of the Strominger system, going from classical hermitian geometry to the physical origins of the equations, and its many legs in geometric analysis, algebraic geometry, topology, and generalized geometry. Hopefully, this manuscript also serves as a guide to the vast literature in the topic.

In Section \ref{sec:metrics} we give an introduction to the theory of balanced metrics in hermitian geometry (in the sense of Michelson \cite{Michel}). In Section \ref{sec:balancedilatino} we study balanced metrics in non-k\"ahlerian Calabi-Yau manifolds and introduce the dilatino equation, one of the building blocks of the Strominger system. In Section \ref{sec:HermEin} we go through the theory of Hermite-Einstein metrics on balanced manifolds, and its relation with slope stability. Section \ref{sec:Strominger} is devoted to the definition of the Strominger system and the existence of solutions. In Section \ref{sec:existence} we discuss several methods to find solutions of the equations, that we illustrate with examples, and comment on Yau's Conjecture for the Strominger system (Conjecture \ref{conj:Yau}).

In Section \ref{sec:physics} we review the physical origins of the Strominger system in string theory. This provides an important motivation for its study, and reveals the links of the Strominger system with conformal field theory, and the theory of string structures. For the physical jargon, we refer to the Glossary in \cite{QFSI}. Finally, in Section \ref{sec:moduli}  we consider recent developments in the geometry of the Strominger system, based on joint work of the author with Rubio and Tipler \cite{GRT}. As we will see, the interplay of the Strominger system with the notion of string class \cite{Redden} leads naturally to an interesting relation with Hitchin's theory of generalized geometry \cite{Hit1}, that we discuss in the context of the moduli problem in Section \ref{sec:modulibis}.
\\
\\
\textbf{Acknowledgements:} I would like to thank Bjorn Andreas -- who introduced me to this topic --, Luis \'Alvarez-C\'onsul, Xenia de la Ossa, Antonio Otal, Roberto Rubio, Eirik Svanes, Carl Tipler, and Luis Ugarte for useful discussions and comments about the manuscript.  I thank the organizers of GEOQUANT 2015 for the invitation to give this lecture course, and for their patience with the final version of this manuscript.

\section{Special metrics in hermitian geometry}\label{sec:metrics}

\subsection{K\"ahler, balanced, and Gauduchon metrics}
Let $X$ be a compact complex manifold of dimension $n$, with underlying smooth manifold $M$. A hermitian metric on $X$ is a riemannian metric $g$ on $M$ such that $g(J \cdot ,J\cdot) = g$, where $J \colon TM \to TM$ denotes the almost complex structure determined by $X$. Denote by $\Omega^k$ (resp. $\Omega^k_\CC$) the space of real (resp. complex) smooth $k$-forms  on $M$. Denote by $\Omega^{p,q} \subset \Omega^{p+q}_\CC$ the space of smooth complex $(p+q)$-forms on $X$ of type $(p,q)$. Note that $\Omega^{p,q}$ belongs to the eigenspace of $\Omega^{p+q}_\CC$ with eigenvalue $i^{q-p}$ with respect to the endomorphism
$$
\alpha \to (-1)^{p+q}\alpha(J \cdot,\ldots,J\cdot).
$$
Associated to $g$ there is a canonical non-degenerate $(1,1)$-form $\omega \in \Omega^{1,1}$, defined by
$$
\omega(V,W) = g(JV,W)
$$
for any pair of vector fields $V,W$ on $M$. The $2$-form $\omega$ is called the \emph{K\"ahler form} of the hermitian manifold $(X,g)$.

By integrability of the almost complex structure, we have the decomposition of the exterior differential $d = \partial + \dbar$ acting on $\Omega^{p,q}$, where $\partial$ and $\dbar$ are given by projection
$$
\partial \colon \Omega^{p,q} \to \Omega^{p+1,q}, \qquad \dbar \colon \Omega^{p,q} \to \Omega^{p,q+1}.
$$
Consider the operator $d^c = i(\dbar - \partial)$ acting on forms $\Omega^k_\CC$. We have the following special types of hermitian structures. 

\begin{definition}\label{def:KBG}
A hermitian metric $g$ on $X$ is
\begin{enumerate}[i)]
\item K\"ahler if $d \omega = 0$,
\item balanced if $d \omega^{n-1} = 0$,
\item Gauduchon if $dd^c(\omega^{n-1}) = 0$.
\end{enumerate}
\end{definition}

\begin{remark}
There are other important notions of special hermitian metrics (see e.g. \cite{Fino,Popovici}), such as pluriclosed metrics or astheno-K\"ahler metrics (given by the conditions $dd^c \omega = 0$ and $dd^c(\omega^{n-2}) = 0$, respectively), but their study goes beyond the scope of the present notes.
\end{remark}

The K\"ahler condition for $g$ is equivalent to $\nabla^g J = 0$, where $\nabla^g$ denotes the Levi-Civita connection of the riemannian metric $g$ (see \cite[Section 1.1]{Gauduchon}). Using that $dd^c = - d^cd$, we have a simple chain of implications:

\medskip
\begin{center}
K\"ahler $\Rightarrow$ balanced $\Rightarrow$ Gauduchon.
\end{center}
\medskip

We say that a complex manifold is \emph{k\"ahlerian} (respectively \emph{balanced}) if it admits a K\"ahler (respectively \emph{balanced}) metric. The existence of K\"ahler and balanced metrics in a compact complex manifold is a delicate question \cite{HarLaw,Michel}. In complex dimension two, the conditions of being balanced and k\"ahlerian are both equivalent to the first Betti number of $X$ being even (see e.g. \cite[Th. 1.2.3]{lt}). Thus, there are complex surfaces --- such as the Hopf surfaces --- which carry no K\"ahler metric (see Example \ref{example:CalEck}). However, in all dimensions $n \geqslant 3$ there exist compact balanced manifolds which are not k\"ahlerian. This is true, for example, for certain complex nilmanifolds (see Example \ref{example:balanced}). In contrast, due to a theorem of Gauduchon \cite{Gau0} every compact complex manifold admits a Gauduchon metric.

\begin{theorem}[\cite{Gau0}]
Every hermitian metric on $X$ is conformal to a Gauduchon metric, uniquely up to scaling when $n > 1$.
\end{theorem}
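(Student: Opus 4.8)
The plan is to reduce the Gauduchon condition to a linear elliptic equation for the conformal factor and then solve it by the theory of principal eigenvalues. Writing a conformal change as $g \mapsto e^{2f}g$, so that $\omega \mapsto e^{2f}\omega$ and hence $\omega^{n-1}\mapsto \phi\,\omega^{n-1}$ with $\phi = e^{2(n-1)f} > 0$, the Gauduchon equation for the rescaled metric becomes the linear condition $dd^c(\phi\,\omega^{n-1}) = 0$. First I would define a second-order differential operator $P$ on real functions by $dd^c(\phi\,\omega^{n-1}) = (P\phi)\,\vol$, with $\vol = \omega^n/n!$. Expanding by the Leibniz rule, the top-order term is $(dd^c\phi)\wedge\omega^{n-1} = 2i\,\partial\dbar\phi\wedge\omega^{n-1}$, which is a nonzero multiple of $(\Lambda\, i\partial\dbar\phi)\,\vol$; since $\Lambda\, i\partial\dbar$ is, up to a constant, the complex Laplacian, the principal symbol of $P$ agrees with that of the Laplacian and $P$ is elliptic. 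The expansion also produces first-order terms and a zeroth-order term $\phi\, dd^c\omega^{n-1}$, the latter of uncontrolled sign, so $P$ is neither self-adjoint nor directly amenable to a maximum principle.

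The key structural observation is that $dd^c(\phi\,\omega^{n-1})$ is exact, so Stokes' theorem gives $\int_X (P\phi)\,\vol = 0$ for every $\phi$; equivalently the formal $L^2$-adjoint satisfies $P^*1 = 0$. Because the zeroth-order coefficient of any second-order operator is recovered by applying it to the constant function $1$, the relation $P^*1=0$ says precisely that $P^*$ has no zeroth-order term. Hence by the strong (Hopf) maximum principle any solution of $P^*\psi = 0$ on the compact connected manifold $X$ must be constant, so $\ker P^* = \RR\cdot 1$ is one-dimensional. Since $P$ has the same principal symbol as the Laplacian its index vanishes, whence $\dim\ker P = \dim\ker P^* = 1$.

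It remains to produce a \emph{positive} solution of $P\phi = 0$, and this is the heart of the argument. I would invoke the Perron--Frobenius / Krein--Rutman theory for elliptic operators: for $\mu$ large the operator $P + \mu$ is invertible with compact, positivity-improving inverse $G_\mu = (P+\mu)^{-1}$ (by the maximum principle, now applicable because the shifted zeroth-order coefficient is positive), so $G_\mu$ has a simple positive spectral radius $r$ whose eigenfunction $\phi_0$ is positive. The adjoint $G_\mu^*=(P^*+\mu)^{-1}$ has the same spectral radius, and since $P^*1=0$ gives $G_\mu^*1 = \mu^{-1}1$ with $1>0$, the Krein--Rutman characterization of the spectral radius as the unique eigenvalue admitting a positive eigenfunction forces $r = \mu^{-1}$; thus $P\phi_0 = (r^{-1}-\mu)\phi_0 = 0$ with $\phi_0 > 0$. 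The main obstacle is precisely this positivity step, since the unsigned zeroth-order term of $P$ prevents applying the maximum principle to $P$ itself, and it is bypassed by transferring positivity from the adjoint. Finally, $\phi_0 > 0$ lets me set $f = \tfrac{1}{2(n-1)}\log\phi_0$ (here $n>1$ is used), producing a Gauduchon metric, while the one-dimensionality of $\ker P$ shows $\phi$ is unique up to a positive constant, i.e. the metric is unique up to scaling. For $n=1$ the condition $dd^c(\omega^{n-1}) = dd^c 1 = 0$ holds automatically and the conformal factor is unrestricted, which is why uniqueness requires $n>1$.
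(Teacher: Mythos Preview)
The paper does not supply its own proof of this theorem: it is quoted with a reference to Gauduchon's original article \cite{Gau0} and no argument is given. So there is nothing in the paper to compare your proposal against beyond the bare statement.

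Your argument is correct and is in fact the standard proof (essentially Gauduchon's). The reduction to the linear elliptic equation $P\phi=0$, the observation that $\int_X P\phi\,\vol=0$ forces $P^*1=0$ and hence $P^*$ has no zeroth-order term, the use of the strong maximum principle to get $\ker P^*=\RR$, and the index argument giving $\dim\ker P=1$ are all exactly as in the classical treatment. The Krein--Rutman step to upgrade a generator of $\ker P$ to a strictly positive function, by transferring the positive eigenfunction $1$ of the adjoint resolvent back via the identification of spectral radii, is also the standard device here.

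One small sign issue worth flagging: with your convention $dd^c(\phi\,\omega^{n-1})=(P\phi)\,\vol$ and $dd^c=2i\partial\bar\partial$, the principal part of $P$ is a positive multiple of $\Lambda_\omega i\partial\bar\partial$, which at a local maximum of $\phi$ is \emph{nonpositive} (equivalently, $P$ has the sign of $\sum\partial_i^2$ rather than $-\sum\partial_i^2$). Consequently it is $-P+\mu$, not $P+\mu$, whose inverse is positivity-improving for large $\mu$; the Krein--Rutman argument should be run with $-P$. This is harmless, since $\ker(-P)=\ker P$ and $(-P)^*1=0$, so the conclusion is unchanged. With that adjustment your proof is complete.
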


A large class of k\"ahlerian complex manifolds is given by the projective algebraic manifolds. This follows from the basic fact that the k\"ahlerian property is inherited by holomorphic immersions, that is, if $X$ is k\"ahlerian and there exists a
holomorphic immersion $f \colon Y \to X$, then $Y$ is k\"ahlerian (by pull-back of a K\"ahler metric on $X$).

\begin{example}\label{example:Kahler}
Any closed complex submanifold of $\mathbb{CP}^N$ is k\"ahlerian. Recall that any closed analytic submanifold $X \subset \mathbb{CP}^N$ is algebraic, by Chow's Theorem.
\end{example}

Basic examples of balanced manifolds which are not k\"ahlerian can be found among complex parallelizable manifolds. Compact complex parallelizable manifolds were characterized by Wang \cite{Wang}, who proved that all arise as a quotient of a complex unimodular Lie group $G$ by a discrete subgroup $\Gamma$. They are, in general, non-k\"ahlerian: such a manifold is k\"ahlerian if and only if it is a torus \cite[p.776]{Wang}. Using Wang's characterization, Abbena and Grassi~\cite{AbGr} showed that all parallelizable complex manifolds are balanced. In fact, any right invariant hermitian metric on $G$ is balanced and this induces a balanced metric on the manifold $G/\Gamma$ \cite[Theorem 3.5]{AbGr}. We discuss a concrete example on the Iwasawa manifold, due to Gray (see e.g. \cite[p. 120]{Gau0bis}).

\begin{example}\label{example:balanced}
Let $G \subset \SL(3,\CC)$ be the non-abelian group given by elements of the form
\begin{equation}
\( \begin{array}{ccc}
1 & z_1 & z_2 \\
0 & 1 & z_3 \\
0 & 0 & 1
\end{array} \) \in \SL(3,\CC),
\end{equation}
for $z_1,z_2,z_3$ arbitrary complex numbers. Let $\Gamma  \subset G$ be the subgroup whose entries are given by Gaussian numbers
$$
\Gamma = G \cap \SL(3,\ZZ[i]).
$$
The quotient $X = G/\Gamma$ is a compact complex parallelizable manifold of dimension $3$. The holomorphic cotangent bundle $T^*X$ can be trivialized explicitly in terms of three holomorphic $1$-forms $\theta_1$, $\theta_2$, $\theta_3$, locally given by
$$
\theta_1 = dz_1, \qquad \theta_2 = dz_2 - z_3 dz_1, \qquad \theta_3 = dz_3,
$$
satisfying the relations
$$
d \theta_1 = d\theta_3 = 0, \qquad d \theta_2 = \theta_1 \wedge \theta_3.
$$
Since $d \theta_2 = \partial \theta_2$ is a non-vanishing exact holomorphic $(2,0)$-form, $X$ is not k\"ahlerian (by the $\partial\dbar$-lemma, any such form vanishes in a K\"ahler manifold). To show that $X$ is balanced, we can take the positive $(1,1)$-form 
\begin{equation}\label{eq:omegaparallel}
\omega = i (\theta_1 \wedge \overline{\theta}_1 + \theta_2 \wedge \overline{\theta}_2 + \theta_3 \wedge \overline{\theta}_3),
\end{equation}
which defines a hermitian metric $g = \omega(\cdot , J \cdot)$ on $X$. It can be readily checked that $d \omega \wedge \omega = 0$.
\end{example}

We introduce next an important $1$-form canonically associated to any hermitian structure, that we use to give a characterization of the balanced and Gauduchon conditions..

\begin{definition}\label{def:Lee}
The \emph{Lee form} of a hermitian metric $g$ on $X$ is the $1$-form $\theta_\omega \in \Omega^1$ defined by
\begin{equation}\label{eq:Lee}
\theta_\omega = J d^*\omega.
\end{equation}
\end{definition}

Here, $d^* = - *d*$ is the adjoint of the exterior differential $d$ for the hermitian metric $g$, where $*$ denotes the (riemannian) \emph{Hodge star operator} of $g$. Alternatively, using the operator
\begin{equation}
\label{eq:Lambda}
 \Lambda_\omega :\Omega^k\lto \Omega^{k-2}\colon \psi \longmapsto \iota_{\omega^{\sharp}} (\psi),
\end{equation}
where $\sharp$ is the operator acting on $k$-forms induced by the
symplectic duality $\sharp\colon T^*X \to TX$ and $\iota$ denotes
the contraction operator, we have
$$
\theta_\omega = \Lambda_\omega d\omega.
$$
A different way of defining the Lee form is via the equation
$$
d \omega^{n-1} = \theta_\omega \wedge \omega^{n-1},
$$
which in particular implies $\Lambda_\omega d\theta_\omega = 0$. From the previous formula we can deduce the change of the Lee form under conformal transformations: if $\tilde \omega = e^\phi \omega$ for a smooth function $\phi \in C^\infty(X)$ then
\begin{equation}\label{eq:Leeconf}
\theta_{\tilde \om} = \theta_\om + (n-1)d\phi.
\end{equation} 
Note that $d\theta_\omega$ is a conformal invariant (when $\theta_\om$ is closed, so is $[\theta_\omega] \in H^1(X,\RR)$). 

\begin{proposition}\label{prop:Lee}
A hermitian metric $g$ on $X$ is
\begin{enumerate}[i)]
\item balanced if and only if $\theta_\omega = 0$,
\item Gauduchon if and only if $d^*\theta_\om = 0$.
\end{enumerate}
\end{proposition}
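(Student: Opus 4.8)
The plan is to deduce both equivalences from the structural identity $d\omega^{n-1} = \theta_\omega \wedge \omega^{n-1}$ recorded above, together with the behaviour of the Hodge star on powers of $\omega$. The one computation I would isolate and use twice is the contraction identity: from Weil's relation $*\omega^{n-1} = (n-1)!\,\omega$ and the standard formula $*(\alpha \wedge \omega^{n-1}) = (n-1)!\,\iota_{\alpha^\sharp}\omega$ for a $1$-form $\alpha$, together with $\iota_{\alpha^\sharp}\omega = J\alpha$ (up to a sign fixed by the convention for $J$ on forms), one gets $*(\alpha \wedge \omega^{n-1}) = (n-1)!\,J\alpha$. Since $*$ and $J$ are isomorphisms on forms, this already shows that the Lefschetz-type map $\alpha \mapsto \alpha \wedge \omega^{n-1}$ from $\Omega^1$ to $\Omega^{2n-1}$ is injective.

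Part i) is then immediate. If $\theta_\omega = 0$ the identity $d\omega^{n-1} = \theta_\omega \wedge \omega^{n-1}$ gives $d\omega^{n-1} = 0$, so $g$ is balanced. Conversely, if $g$ is balanced then $\theta_\omega \wedge \omega^{n-1} = 0$, and injectivity of $\alpha \mapsto \alpha \wedge \omega^{n-1}$ forces $\theta_\omega = 0$.

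For part ii) the key step is to rewrite $d^c\omega^{n-1}$ in terms of the Lee form. Decomposing $\theta_\omega = \theta^{1,0} + \theta^{0,1}$ into its $(1,0)$ and $(0,1)$ parts and matching bidegrees in $d\omega^{n-1} = \theta_\omega \wedge \omega^{n-1}$ gives $\partial \omega^{n-1} = \theta^{1,0} \wedge \omega^{n-1}$ and $\dbar \omega^{n-1} = \theta^{0,1} \wedge \omega^{n-1}$. Substituting into $d^c = i(\dbar - \partial)$ and using that $J$ acts on $(1,0)$- and $(0,1)$-forms by $\mp i$ yields $d^c\omega^{n-1} = (J\theta_\omega) \wedge \omega^{n-1}$. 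Applying $*$ and the contraction identity above, now with $\alpha = J\theta_\omega$, gives $* \, d^c\omega^{n-1} = (n-1)!\,J^2\theta_\omega = -(n-1)!\,\theta_\omega$, hence $d^c\omega^{n-1} = (n-1)!\,*\theta_\omega$ up to sign.

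Finally I would differentiate this last identity: $dd^c\omega^{n-1} = (n-1)!\,d * \theta_\omega$, so that $*(dd^c\omega^{n-1}) = (n-1)!\, * d * \theta_\omega = -(n-1)!\,d^*\theta_\omega$. Since $*$ is an isomorphism from top-degree forms to functions, $dd^c\omega^{n-1} = 0$ if and only if $d^*\theta_\omega = 0$, which is the Gauduchon characterization. The main obstacle I anticipate is purely bookkeeping: fixing consistent sign and normalization conventions for $J$ on forms, for the Hodge star on powers of $\omega$, and for the contraction formula, so that the chain of identities closes on $d^*\theta_\omega$ exactly. None of these affect the vanishing statements, since every constant appearing is a nonzero multiple of $(n-1)!$.
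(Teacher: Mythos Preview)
Your argument is correct, and the sign caveats you flag are indeed harmless for the equivalences. The paper, however, takes a shorter operator-theoretic route that starts from the definition $\theta_\omega = Jd^*\omega$ rather than from the wedge identity $d\omega^{n-1} = \theta_\omega \wedge \omega^{n-1}$. For part~i) it simply notes $*\omega = \omega^{n-1}/(n-1)!$, so that $\theta_\omega = 0 \Leftrightarrow d^*\omega = 0 \Leftrightarrow d*\omega = 0 \Leftrightarrow d\omega^{n-1} = 0$, with no need for the Lefschetz injectivity step. For part~ii) it computes $d^*\theta_\omega = -*dJd*\omega$ directly and then uses $d^c = JdJ^{-1}$, $*J = J*$, and $*^2 = (-1)^k$ on $k$-forms to rewrite this as $-*Jd^cd*\omega$, which unwinds (via $*\omega \propto \omega^{n-1}$ and $dd^c = -d^cd$) to a nonzero multiple of $*\,dd^c\omega^{n-1}$. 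Your route via the bidegree decomposition and the intermediate identity $d^c\omega^{n-1} \propto *\theta_\omega$ reaches the same endpoint from the other side; it is a bit longer but has the advantage that this intermediate formula is useful in its own right. The paper's approach avoids the type-decomposition bookkeeping at the cost of being slightly more opaque about where $d^c$ enters.
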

\begin{proof}
The first part follows from the equality $* \omega = \frac{\omega^{n-1}}{(n-1)!}$, since by definition $d^* = - *d*$. As for the second part, the statement follows from
$$
d^*\theta_\om = - * dJd * \omega  = - * J d^cd * \omega, 
$$
where we have used that $d^c = JdJ^{-1}$, and the algebraic identities $* J = J*$, $*^2 \alpha^k = (-1)^k \alpha^k$ for any $\alpha^k \in \Omega^k$.
\end{proof}

\subsection{Balanced manifolds}\label{sec:balanced}

In this section we study general properties of balanced manifolds, which enable to construct a large class of examples, and also to identify complex manifolds which are not balanced. 
The guiding principle is that the balanced property for a complex manifold is, in a sense, dual to the k\"ahlerian property. This can be readily observed from Proposition \ref{prop:Lee}, which implies that the balanced condition for a hermitian metric is equivalent to the K\"ahler form being co-closed
$$
d^*\omega = 0.
$$

As mentioned in Example \ref{example:Kahler}, the k\"ahlerian property of complex manifolds is inherited by holomorphic immersions. Balanced manifolds satisfy a dual `functorial property', in terms of proper holomorphic submersions. Thus, the K\"ahler property is induced on sub-objects and the balanced property projects to quotient objects.

\begin{proposition}[\cite{Michel}]\label{prop:submer}
Let $X$ and $Y$ be compact complex manifolds. Then
\begin{enumerate}[i)]
\item If $X$ and $Y$ are balanced, then $X \times Y$ is balanced.
\item Let $f \colon X \to Y$ be a proper holomorphic submersion. If $X$ is balanced then $Y$ is balanced.
\end{enumerate}
\end{proposition}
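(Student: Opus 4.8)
The plan is to prove the two statements separately, working in both cases with the characterization of the balanced condition in terms of a closed, strictly positive $(k-1,k-1)$-form rather than directly with the Lee form. Recall that a hermitian metric $\omega_Y$ on a $k$-dimensional manifold $Y$ is balanced precisely when $d\omega_Y^{k-1}=0$, and that, by Michelsohn's pointwise algebraic lemma \cite{Michel}, the assignment $\omega \mapsto \omega^{k-1}$ is a bijection from the cone of positive $(1,1)$-forms onto the cone of strictly positive $(k-1,k-1)$-forms. Thus producing a closed strictly positive $(k-1,k-1)$-form on $Y$ is equivalent to producing a balanced metric.

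For (i), write $m=\dim X$, $k=\dim Y$, so $n:=\dim(X\times Y)=m+k$, and let $\omega_X$, $\omega_Y$ be balanced metrics on $X$ and $Y$. I would equip $X\times Y$ with the product metric, whose K\"ahler form is the positive $(1,1)$-form $\omega=p_X^*\omega_X+p_Y^*\omega_Y$, where $p_X,p_Y$ are the projections; from here on I suppress the pullbacks. Expanding $\omega^{n-1}$ by the binomial theorem and discarding the terms that vanish for bidegree reasons ($\omega_X^{\,j}=0$ for $j>m$ and $\omega_Y^{\,n-1-j}=0$ for $n-1-j>k$), only $j=m-1$ and $j=m$ survive, giving
$$\omega^{n-1}=\binom{n-1}{m-1}\,\omega_X^{\,m-1}\wedge\omega_Y^{\,k}+\binom{n-1}{m}\,\omega_X^{\,m}\wedge\omega_Y^{\,k-1}.$$
Now $d\omega_X^{\,m-1}=0$ since $X$ is balanced and $d\omega_Y^{\,k}=0$ since $\omega_Y^{\,k}$ has top degree on $Y$; symmetrically $d\omega_X^{\,m}=0$ and $d\omega_Y^{\,k-1}=0$. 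Hence each summand is closed, so $d\omega^{n-1}=0$ and $\omega$ is balanced.

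For (ii), set $k=\dim Y$ and let $r=\dim X-k$ be the complex fibre dimension. The key tool is integration along the fibre $f_*\colon\Omega^{p,q}(X)\to\Omega^{p-r,q-r}(Y)$, which is well defined because $f$ is a proper submersion with compact fibres, commutes with $d$ (no boundary term, the fibres being closed), and drops the bidegree by $(r,r)$ since the fibres are complex submanifolds. Starting from a balanced metric $\omega_X$, whose power $\omega_X^{\dim X-1}$ is a closed strictly positive $(\dim X-1,\dim X-1)$-form, I would set $\Psi:=f_*\big(\omega_X^{\dim X-1}\big)\in\Omega^{k-1,k-1}(Y)$. Then $d\Psi=f_*\big(d\,\omega_X^{\dim X-1}\big)=0$. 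To see that $\Psi$ is strictly positive, fix $y\in Y$ and a $(1,0)$-form $\beta$ with $\beta_y\neq 0$; by the projection formula,
$$\Psi\wedge i\beta\wedge\overline\beta=f_*\big(\omega_X^{\dim X-1}\wedge i\,f^*\beta\wedge\overline{f^*\beta}\big).$$
Since $f$ is a submersion, $(f^*\beta)_x=(df_x)^*\beta_y\neq0$ at every $x\in f^{-1}(y)$, so the integrand is a nonnegative top-degree form on $X$ that is strictly positive along the fibre; integrating over the compact fibre $f^{-1}(y)$ yields a strictly positive multiple of the volume form of $Y$ at $y$. Thus $\Psi$ is closed and strictly positive, and by Michelsohn's lemma $\Psi=\omega_Y^{\,k-1}$ for a unique hermitian metric $\omega_Y$, which then satisfies $d\omega_Y^{\,k-1}=d\Psi=0$ and is therefore balanced.

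The routine parts are the binomial bookkeeping in (i) and the tracking of bidegrees. The main obstacle is (ii): one must know that fibre integration is a well-defined smooth operation commuting with $d$ and respecting bidegree (which uses properness and holomorphicity of $f$), and one must invoke Michelsohn's nontrivial algebraic fact that a strictly positive $(k-1,k-1)$-form is the $(k-1)$-st power of a genuine hermitian metric. The heart of the argument is the strict positivity of $\Psi$ via the projection formula, and this is exactly where the submersion hypothesis enters, through the injectivity of $f^*$ on covectors.
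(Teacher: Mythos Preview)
Your proof is correct and follows essentially the same approach as the paper: for (i) you expand $(\omega_X+\omega_Y)^{n-1}$ via the binomial theorem and observe that only the two surviving terms are closed, and for (ii) you push forward $\omega_X^{\dim X-1}$ along the fibres, check closedness and strict positivity, and invoke Michelsohn's $(k-1)$-th root lemma. The only difference is that you spell out the positivity check for $f_*\omega_X^{\dim X-1}$ via the projection formula, whereas the paper simply asserts positivity and refers to \cite{Michel}; your added detail is correct and welcome.
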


\begin{proof}
Let $n$ and $m$ denote the complex dimensions of $X$ and $Y$, respectively. To prove $i)$ we simply note that if $\omega_X$ and $\omega_Y$ are (the K\"ahler forms of) balanced metrics on $X$ and $Y$, respectively, then
$$
(\omega_X + \omega_Y)^{n + m - 1} =   \binom{n-1}{m+n-1}\omega_X^{n-1} \wedge \omega_Y^m  + \binom{m-1}{m+n-1} \omega_X^n \wedge \omega_Y^{m-1},
$$
for the product hermitian structure on $X \times Y$. 

As for $ii)$, let $\omega_X$ be a balanced metric on $X$ and consider the closed $(n-1,n-1)$-form $\tau_X = \omega_X^{n-1}$. Since $f$ is proper, there exists a closed $2m-2$-form $\tau_Y = f_*\tau_X$ on $Y$, given by integration along the fibres. The fibres are complex, and hence $\tau_Y$ is actually a form of type $(m-1,m-1)$. Furthermore, it can be checked that it is positive, in the sense that it induces a positive form on every complex hyperplane in $TY$. A linear algebra argument (see \cite[p. 280]{Michel}) shows now that $\tau_Y$ admits an $(m-1)$-th positive root, that is, a positive $(1,1)$-form $\omega_Y$ on $Y$, such that $\omega_Y^{m-1} = \tau_Y$.  
\end{proof}

The converse of $ii)$ in Proposition \ref{prop:submer} is not true. To see a counterexample, we need a basic cohomological property of balanced manifolds. This shall be compared with the fact that, on a k\"alerian manifold, no compact complex curve can be trivial in homology.

\begin{proposition}[\cite{Michel}]\label{prop:cohomol}
Let $X$ be a compact complex balanced manifold of dimension $n$. Then every compact complex subvariety of codimension $1$ represents a non-zero class in $H_{2n-2}(X,\RR)$.
\end{proposition}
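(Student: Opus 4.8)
The plan is to detect the homology class of a codimension-one subvariety by pairing it against the closed form $\omega^{n-1}$ of a balanced metric. First I would fix a balanced metric on $X$ with K\"ahler form $\omega$, so that $d\omega^{n-1} = 0$ and hence $\omega^{n-1}$ represents a de Rham class in $H^{2n-2}_{\mathrm{dR}}(X,\RR)$. Recall that de Rham's theorem furnishes a pairing $H_{2n-2}(X,\RR) \times H^{2n-2}_{\mathrm{dR}}(X,\RR) \to \RR$ sending $([Z],[\alpha])$ to $\int_Z \alpha$, which descends to (co)homology and therefore depends only on the respective classes. Consequently, to prove that a codimension-one subvariety $Z$ carries a nonzero class it is enough to exhibit a single closed $(2n-2)$-form against which $Z$ integrates nontrivially: the natural candidate is $\omega^{n-1}$, and a strictly positive value of $\int_Z \omega^{n-1}$ immediately forces $[Z] \neq 0$, since the pairing of a vanishing homology class with any closed form is zero.

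The positivity input I would use is Wirtinger's identity. On the regular locus $Z_{\mathrm{reg}}$, a complex $(n-1)$-dimensional submanifold of $X$, the form $\omega$ restricts to the K\"ahler form of the induced hermitian metric, so $\tfrac{1}{(n-1)!}\omega^{n-1}$ restricts to the Riemannian volume form of $Z_{\mathrm{reg}}$ (this is the same computation $\ast\omega = \omega^{n-1}/(n-1)!$ already used in Proposition \ref{prop:Lee}). In particular the integrand is pointwise strictly positive, and, granting convergence, $\int_{Z} \omega^{n-1} = (n-1)!\,\Vol(Z_{\mathrm{reg}}) > 0$. Combined with the pairing above, this yields the conclusion.

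The step requiring genuine care --- and the one I expect to be the main obstacle --- is the treatment of the singularities of $Z$, which enters both in defining $[Z] \in H_{2n-2}(X,\RR)$ and in justifying that $\int_{Z_{\mathrm{reg}}}\omega^{n-1}$ is finite and genuinely computes the de Rham pairing. The singular locus $Z_{\mathrm{sing}}$ is a complex subvariety of complex dimension at most $n-2$, hence of real codimension at least two in $Z$, and complex analytic sets have locally finite volume by Lelong's theorem; this is what controls convergence and guarantees that the singular stratum contributes no boundary. I would make this precise by one of two standard routes: either realize $Z$ as a closed positive current of integration of bidimension $(n-1,n-1)$, whose class under the isomorphism with singular homology is the fundamental class $[Z]$, or invoke Hironaka's resolution of singularities to obtain a proper holomorphic map $\pi \colon \wt{Z} \to X$ from a compact complex manifold $\wt{Z}$ of dimension $n-1$, generically injective onto $Z$, and then reduce to the smooth case via $\int_Z \omega^{n-1} = \int_{\wt{Z}} \pi^*\omega^{n-1}$, where Wirtinger and Stokes apply on the manifold $\wt{Z}$ without difficulty. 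Either path makes the positivity and the pairing rigorous; the remaining verifications are formal.
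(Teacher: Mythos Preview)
Your proposal is correct and follows exactly the approach the paper indicates: the paper simply states that the proof ``follows easily by integration of the closed form $\omega^{n-1}$ along the subvariety, for any balanced metric on $X$,'' without spelling out the Wirtinger positivity or the handling of singularities. Your write-up supplies precisely the details (currents of integration, or resolution) that the paper leaves implicit.
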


The proof of Proposition \ref{prop:cohomol} follows easily by integration of the closed form $\omega^{n-1}$ along the subvariety, for any balanced metric on $X$. This property of codimension $1$ subvarieties of a balanced manifold was strengthened by Michelson in terms of currents, whereby he provided a cohomological characterization of those compact complex manifolds which admit balanced metrics \cite[Th. A]{Michel}. We will not comment further on this more sophisticated notion, since Proposition \ref{prop:cohomol} is enough to give first examples of complex manifolds which are not balanced.

\begin{example}[\cite{CalabiEck}]\label{example:CalEck}
Consider $\hat X = \CC^{p+1} \backslash \{0\} \times \CC^{q+1} \backslash \{0\}$ and the abelian group $\CC$ acting on $\hat X$ by
$$
t\cdot (x,y) = (e^t x,e^{\alpha t} y),
$$
for $\alpha \in \CC \backslash \RR$. The \emph{Calabi-Eckmann manifold} $X = \hat X /\CC$, is diffeomorphic to $S^{2p + 1} \times S^{2q + 1}$, and admits a natural holomorphic fibration structure
$$
f \colon X \to \CC\PP^p \times \CC\PP^q
$$
induced by the product of the Hopf mappings $S^{2k + 1} \to \CC\PP^k$, for $k = p,q$. Hence, $X$ admits plenty of compact complex submanifolds of codimension-one. Since the homology is zero in dimension $2p + 2q$, we see that these manifolds support no balanced metrics if $p + q > 0$. Note that $p$ is proper and $\CC\PP^p \times \CC\PP^q$ is k\"ahlerian, and hence the converse of $ii)$ in Proposition \ref{prop:submer} does not hold.
\end{example}

The next result, also due to Michelson, provides a weak converse of $ii)$ in Proposition \ref{prop:submer}. 

\begin{theorem}[\cite{Michel}]\label{th:fibr}
Let $X$ be a compact complex connected manifold. Suppose that $X$ admits a holomorphic map $f \colon X \to C$ onto a complex curve $C$, with a cross section. If the non-singular fibres of $f$ are balanced, then $X$ is balanced.
\end{theorem}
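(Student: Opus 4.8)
The plan is to build on $X$ a closed, strictly positive real $(n-1,n-1)$-form $\Omega$. By the linear-algebra fact used in the proof of Proposition \ref{prop:submer}, such an $\Omega$ admits a positive $(1,1)$-form $(n-1)$-st root $\omega$ with $\omega^{n-1}=\Omega$, and then $d\omega^{n-1}=d\Omega=0$ exhibits $\omega$ as balanced; so producing $\Omega$ suffices. (Conceptually this is dual to the obstruction recorded in Proposition \ref{prop:cohomol}: by Michelson's current refinement of that statement, \cite[Th. A]{Michel}, one could instead rule out a nonzero positive bounding current of the relevant bidegree. I prefer to construct $\Omega$ directly.) The form will be assembled from the balanced structures on the fibres together with the pullback of a Kähler form on $C$.

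First I would discard the singular fibres: let $D\subset C$ be the finite critical set, $C^\circ=C\setminus D$ and $X^\circ=f^{-1}(C^\circ)$, so that $f\colon X^\circ\to C^\circ$ is a proper holomorphic submersion whose fibres $X_c$ are the balanced $(n-1)$-folds of the hypothesis. Since $C$ is a curve it is Kähler; fix $\omega_C>0$ with $d\omega_C=0$ and set $\beta=f^*\omega_C$, a closed semi-positive $(1,1)$-form on $X$. On each fibre the set of closed positive $(n-1,n-1)$-forms is nonempty (by hypothesis) and convex, since closedness is linear and positivity is open and convex; these sets form a bundle of nonempty convex sets over $C^\circ$, which therefore admits a smooth global section $\Gamma$, an $(n-1,n-1)$-form on $X^\circ$ restricting on each $X_c$ to $\omega_c^{\,n-1}$ for a balanced metric $\omega_c$. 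I then set $\Omega=\Gamma+\beta\wedge\eta$ with $\eta$ fibrewise positive of type $(n-2,n-2)$. Since $\beta$ vanishes on fibre tangent directions, $\Omega|_{X_c}=\omega_c^{\,n-1}>0$, so $\Gamma$ is positive along the fibres and degenerate only transversally; as $C$ is one-dimensional the only missing direction is the base direction, and $\beta\wedge\eta$ (with $\eta$ scaled up) restores positivity there, so for a suitable choice $\Omega$ is a strictly positive $(n-1,n-1)$-form on $X^\circ$.

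The crux is closedness. Because each $\Gamma|_{X_c}$ is fibrewise closed and $C$ is one-dimensional, $\partial\Gamma$ is forced to be divisible by the base leg, say $\partial\Gamma=f^*dw\wedge P$, and $P$ measures the variation of the fibre class $[\omega_c^{\,n-1}]\in H^{2n-2}(X_c,\RR)$ as $c$ moves; likewise for $\dbar\Gamma$. Thus $d\Omega=0$ amounts to trivializing this variation, and this is exactly where both hypotheses enter: the cross section $s$ splits $f^*\colon H^{\bullet}(C,\RR)\hookrightarrow H^{\bullet}(X,\RR)$ and lets one normalize the fibre classes coherently, while the one-dimensionality of $C$ reduces the obstruction to a $\dbar$-problem along the fibres that can be solved, after which $\Gamma$ may be corrected by terms carrying a single base leg so as to make $\Omega$ closed. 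The main obstacle I expect is precisely this step, controlling the variation of the fibrewise balanced classes and solving the resulting fibrewise equation, together with the delicate task of extending the closed positive form across the singular fibres over $D$ to a global form on all of $X$. It is here that the existence of a cross section, as opposed to an arbitrary submersion (compare the failure of the converse of Proposition \ref{prop:submer}$(ii)$ exhibited in Example \ref{example:CalEck}), is used in an essential way.
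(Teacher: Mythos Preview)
The paper does not give its own proof of this theorem; it merely quotes the result from Michelsohn \cite{Michel} and records the key topological consequence of the cross section, namely that no regular fibre can be homologous to zero in $H_{2n-2}(X,\RR)$. Michelsohn's original argument proceeds via the current characterization you yourself cite (\cite[Th.~A]{Michel}): one supposes there is a nonzero positive current of the obstructing type and derives a contradiction, and it is in that dual picture that the nontriviality of the fibre class is used. So there is no proof in the paper to compare against, and your constructive route is genuinely different from the one the paper points to.

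As a proof, however, your proposal has a real gap at exactly the place you flag. The fibres $X_c$ have complex dimension $n-1$, so $H^{2n-4}(X_c,\RR)$ is generically the relevant group for correcting $\Gamma$ by fibrewise $(n-2,n-2)$-forms, and there is no reason it is controlled by the mere existence of a section. Your stated use of the cross section, that $s^*$ splits $f^*$ on cohomology, does not obviously bear on the fibrewise $\dbar$-problem you need to solve; what the section actually buys (and what the paper emphasizes) is the homological nontriviality of the fibre, which is a statement about pairings with currents rather than about normalizing the forms $\omega_c^{n-1}$. In particular, normalizing the top class $[\omega_c^{n-1}]\in H^{2n-2}(X_c,\RR)\cong\RR$ to have constant volume is automatic and does not require a section, so the section must be doing something else, and your sketch does not identify what. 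Finally, the extension of a closed positive $(n-1,n-1)$-form across the singular fibres is a genuinely hard step that you leave entirely open; without it the construction only produces a balanced metric on $X^\circ$, not on $X$. If you want to pursue the constructive route, you will need to make both of these steps precise; otherwise the current-theoretic argument in \cite{Michel} is the cleaner path.
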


Note that the existence of the cross section implies that the pull-back $f^*[C]$ of the fundamental class $[C]$ of $C$ is non-trivial in $H^2(X,\RR)$, and therefore no regular fibre can be homologous to zero (since its class in homology is the Poincare dual of $[C]$). To illustrate this result, consider the Hopf surface $S^1 \times S^3$ -- regarded as a Calabi-Eckmann manifold in Example \ref{example:CalEck} --. This manifold admits a holomorphic submersion $S^1\times S^3 \to \CC\PP^1$ with K\"ahler fibres (given by elliptic curves) but it is not balanced, because the fibres are homologically trivial.

The following example, which fulfils the hypothesis of Theorem \ref{th:fibr}, is due to Calabi.

\begin{example}[\cite{Calabi}]\label{example:Calabi}
Let $C$ be a compact Riemann surface and $\iota \colon \tilde C \to \RR^3$ a conformal minimal immersion, where $\tilde C$ denotes the universal cover of $C$. Considering the product immersion
$$
\iota \times \Id \colon \tilde C \times \RR^4 \to \RR^3 \oplus \RR^4 = \RR^7,
$$
the $6$-dimensional submanifold $\iota(\tilde C \times \RR^4)$ inherits an integrable complex structure induced by Cayley multiplication (where $\RR^7$ is regarded as the imaginary octonions). This complex structure is invariant under covering transformations on $\tilde C$ and translations on $\RR^4$, and satisfies that the natural inclusions $\tilde C \times \{x\} \to \tilde C \times \RR^4$ are holomorphic for any $x \in \RR^4$. Hence for any lattice $\Lambda \subset \RR^4$ we can produce a compact quotient manifold $X_\Lambda$ which admits a holomorphic projection $f \colon X_\Lambda \to C$. The fibres of this map are complex tori and the map $f$ has holomorphic cross sections (induced by the inclusions $\tilde C \times \{x\} \to \tilde C \times \RR^4$). By Theorem \ref{th:fibr}, $X_\Lambda$ is a balanced manifold. Calabi proved in \cite{Calabi} that the manifolds $X_\Lambda$ are not k\"ahlerian.
\end{example}

Further examples of balanced holomorphic fibrations (which do not satisfy the hypothesis of Theorem \ref{th:fibr}) are given by twistor spaces of four-dimensional riemannian manifolds. Given an oriented riemannian $4$-manifold $N$, there is associated twistor space $T$, given by an $S^2$-bundle over $N$. The fibre of $T$ over a point $x$ in $N$ is the sphere of all orthogonal almost complex structure on $T_xN$ compatible with the orientation. The twistor space $T$ has a canonical almost complex structure, which is integrable if
and only if $N$ is self-dual \cite{AHS}. Furthermore, there is a natural balanced metric on $T$. As shown by Hitchin \cite{HitchinTwistor}, the only compact twistor spaces which are K\"ahler are those associated $S^4$ and $\CC\PP^2$.

The next result is due to Alessandrini and Bassanelli, and proves that the property of being balanced is a birational invariant. Due to a counterexample of Hironaka \cite{Hiro}, this is not true for K\"ahler manifolds, and thus the existence of balanced metrics is a more robust property than the k\"ahlerian condition for a compact complex manifold.

\begin{theorem}[\cite{AlBas1,AlBas2}]\label{th:birational}
Suppose $X$ and $Y$ are compact complex manifolds. Let $f \colon X \to Y$ be a modification. Then $X$ is balanced if and only if $Y$ is balanced.
\end{theorem}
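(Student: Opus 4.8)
The strategy is to replace the differential-geometric balanced condition by a current-theoretic obstruction and then carry that obstruction across $f$. The input I would use is Michelson's current characterization of balancedness \cite{Michel}, which refines Proposition \ref{prop:cohomol} and is a Harvey--Lawson type duality: a compact complex manifold $W$ of dimension $n$ is balanced if and only if it carries no nonzero positive $(1,1)$-current $T$ that is $d$-exact. The geometric content is that balanced metrics are (via the $(n-1)$-st root) closed strictly positive $(n-1,n-1)$-forms, and the cone they span is the interior of the dual of the cone of positive $(1,1)$-boundaries; Proposition \ref{prop:cohomol} is the special case in which $T$ is the integration current of a divisor. With this criterion in hand, the theorem reduces to showing that $f$ transports $d$-exact positive $(1,1)$-currents in both directions. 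Throughout, $n = \dim X = \dim Y$, and I write $A \subset Y$ for the (proper analytic) locus over which $f$ fails to be biholomorphic and $E = f^{-1}(A) \subset X$ for the exceptional set.

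For the implication $X$ balanced $\Rightarrow Y$ balanced I would argue by contraposition using pullback. If $Y$ is not balanced, pick a nonzero positive $(1,1)$-current $T = dS$ on $Y$. Its pullback $f^*T$, defined as a positive current by the Alessandrini--Bassanelli theory of pullback under modifications \cite{AlBas1,AlBas2}, is again a positive $(1,1)$-current; it is $d$-exact because $f^*T = d(f^*S)$, and it is nonzero because $f$ restricts to a biholomorphism over the dense open set $Y \setminus A$. By the criterion, $X$ is not balanced. This direction is the clean one, once the positive-current pullback is available.

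The reverse implication $Y$ balanced $\Rightarrow X$ balanced is where the difficulty sits. Here I would push forward: an obstruction $\widehat T = d\widehat S$ on $X$ yields the positive $(1,1)$-current $f_*\widehat T = d(f_*\widehat S)$ on $Y$, which is again $d$-exact. If $f_*\widehat T \neq 0$ we contradict balancedness of $Y$ and are done; the trouble is that $f_*\widehat T$ can vanish precisely when $\widehat T$ is supported on the exceptional set $E$. The heart of the proof is therefore to rule out nonzero $d$-exact positive $(1,1)$-currents supported on $E$. I expect this to be the main obstacle, and I would attack it using the geometry of $E$: since $E$ fibers over the lower-dimensional set $A$ with positive-dimensional fibres, a positive $(1,1)$-current carried by $E$ must interact with the fibre directions, and one shows that such a current cannot simultaneously be positive and $d$-exact on $X$.

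An equivalent and perhaps more transparent route for this reverse direction is constructive: from a balanced metric $\omega_Y$ on $Y$, build a closed strictly positive $(n-1,n-1)$-form on $X$ as $f^*\omega_Y^{n-1}$ plus a closed positive correction term supported near $E$, the correction being modelled on a relatively positive form along the exceptional fibres and patched in with a cut-off. In the model case where $f$ is a single blow-up along a smooth centre, so that $E = \PP(N)$ is a projective bundle, this correction can be written down explicitly, and the general case can be approached by dominating $f$ by such blow-ups. Either way the crux is the same: controlling positivity and closedness across the exceptional locus, which is exactly the point where the naive pushforward argument breaks down.
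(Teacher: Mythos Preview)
The paper does not prove this theorem; it is stated with attribution to \cite{AlBas1,AlBas2} and followed only by the definition of a modification and a remark on class~$\mathcal{C}$ manifolds. There is no in-text argument to compare against.

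Your sketch is nonetheless the strategy of the cited references: recast balancedness via Michelsohn's current criterion \cite{Michel} and transport obstruction currents across $f$, with the Alessandrini--Bassanelli pullback of positive currents under modifications as the essential technical input. You have correctly located the hard direction ($Y$ balanced $\Rightarrow X$ balanced) and its crux---obstruction currents supported on the exceptional set $E$---which is exactly the content of \cite{AlBas2}, and both of your suggested attacks (structure of positive exact currents carried by $E$, or a constructive correction of $f^*\omega_Y^{\,n-1}$ near $E$) are in the right spirit. One point to tighten: Michelsohn's obstruction is a positive bidegree-$(1,1)$ current that is the $(1,1)$-\emph{component} of a boundary, not literally $d$-exact, so the step $f^*T = d(f^*S)$ needs care, since the primitive $S$ is itself a current and the pullback theory of \cite{AlBas1} is developed for positive currents rather than arbitrary ones. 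The outline is sound, but that step requires the actual machinery rather than formal manipulation.
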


A modification $f \colon X \to Y$ is a holomorphic map such that there exists a complex submanifold $N \subset Y$ of codimension at least two and a biholomorphism $f \colon X \backslash f^{-1}(N) \to Y \backslash N$ given by restriction. As a corollary of the previous result, any compact complex manifold of Fujiki class $\mathcal{C}$ is balanced, since, by definition, it is bimeromorphic to a K\"ahler manifold.

We finish this section with some comments on the behaviour of the k\"ahlerian and balanced properties under deformations of complex structure. Kodaira and Spencer proved that any small deformation of a compact K\"ahler manifold is again a K\"ahler manifold \cite[Th. 15]{KodSp}. Unlike for k\"ahlerian manifolds, the existence of balanced metrics on a compact complex manifold is not an open condition under small deformations of the complex structure. This was shown explicitly in  \cite[Proposition 4.1]{AlBas0}, for the Iwasawa manifold endowed with the holomorphically parallelizable complex structure (see Example \ref{example:balanced}). As shown in \cite{AngUg,FuYaubalanced,Wu}, a balanced analogue of the stability result of Kodaira and Spencer 
requires a further assumption on the variation of Bott-Chern cohomology of the complex manifold. In particular, Wu proved in \cite[Th. 5.13]{Wu} that small deformations of compact balanced manifolds satisfying the $\partial \dbar$-lemma still admit balanced metrics.

Hironaka's Example \cite{Hiro} mentioned above shows in particular that the K\"ahler property is not closed under deformations of complex structure. 
Recently, Ceballos, Otal, Ugarte and Villacampa \cite{COUV} proved the analogue result for balanced manifolds, that is, that the balanced property is not closed under holomorphic deformations. This result has been strengthened in \cite{FiOU}, via the construction of a family over a disk whose generic element is a balanced manifold satisfying the $\partial \dbar$-lemma and whose central fibre is not balanced.

\section{Balanced metrics on Calabi-Yau manifolds}\label{sec:balancedilatino}

\subsection{The dilatino equation}\label{sec:dilatino}

We introduce next an equation -- for a hermitian metric on a complex manifold with trivial canonical bundle -- which is closely related to the balanced condition, and constitutes one of the building blocks of the Strominger system. We will use the following notation throughout.

\begin{definition}\label{def:CY}
A Calabi-Yau $n$-fold is a pair $(X,\Omega)$, given by a complex manifold $X$ of dimension $n$ and a non-vanishing holomorphic global section $\Omega$ of the canonical bundle $K_X = \Lambda^n T^*X$.
\end{definition}

We should stress that in the previous definition we do not require $X$ to be k\"ahlerian. To introduce the dilatino equation, given a hermitian metric $g$ on $X$ we will denote by $\|\Omega\|_\omega$ the norm of $\Omega$, given explicitly by
\begin{equation}\label{eq:normnform}
\|\Omega\|_\omega^2 \frac{\omega^n}{n!} = (-1)^{\frac{n(n-1)}{2}}i^n \Omega \wedge \overline{\Omega}.
\end{equation}

\begin{definition}\label{def:dilatino}
The \emph{dilatino equation}, for a hermitian metric $g$ on $(X,\Omega)$, is
\begin{equation}\label{eq:dilatinosec2}
d^*\omega = d^c \log \|\Omega\|_\omega.
\end{equation} 
\end{definition}

When $n = 1$, $g$ is necessarily K\"ahler and the Calabi-Yau assumption implies that $X$ is an elliptic curve. Then, \eqref{eq:dilatinosec2} reduces to $d \log \|\Omega\|_\omega = 0$, which is equivalent to $\omega$ being Ricci-flat. We assume $n>1$ in the sequel. 

The next result shows that the existence of solutions of \eqref{eq:dilatinosec2} is equivalent to the existence of a K\"ahler Ricci-flat metric, if $n=2$, and to the existence of a balanced metric when $n \geqslant 3$. 
The proof relies on the observation 
in \cite{GMW,LiYau} that the dilatino equation is equivalent to the \emph{conformally balanced equation} 
\begin{equation}\label{eq:LiYau}
d (\|\Omega\|_\omega \omega^{n-1}) = 0.
\end{equation}

\begin{proposition}\label{prop:conformalbalanced}
Let $\sigma$ be a hermitian conformal class on $(X,\Omega)$. Then
\begin{enumerate}[i)]
\item if $n = 2$, then $\sigma$ admits a solution of \eqref{eq:dilatinosec2} if and only if all $g \in \sigma$ is a solution, if and only there exists a K\"ahler Ricci-flat metric on $\sigma$.
\item if $n \geqslant 3$ then $\sigma$ admits a solution of \eqref{eq:dilatinosec2} if and only if $\sigma$ admits a balanced metric.
\end{enumerate}
If $X$ is compact, then there exists at most one balanced metric on $\sigma$ up to homothety.
\end{proposition}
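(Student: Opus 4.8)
The plan is to deduce the uniqueness directly from the Lee-form characterization of the balanced condition in Proposition~\ref{prop:Lee} together with the conformal transformation law \eqref{eq:Leeconf}, so that the whole assertion reduces to essentially a one-line computation. The guiding observation is that, for forms in a fixed conformal class, the balanced condition is governed entirely by the Lee form, and the latter transforms affinely under conformal rescaling.

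First I would take two balanced metrics lying in the conformal class $\sigma$, with K\"ahler forms $\omega$ and $\tilde\omega$. Belonging to the same conformal class means precisely that $\tilde\omega = e^\phi \omega$ for some $\phi \in C^\infty(X)$, and proving uniqueness up to homothety amounts to showing that $\phi$ must be constant.

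Next I would invoke Proposition~\ref{prop:Lee}~i), by which each of the two balanced conditions is equivalent to the vanishing of its associated Lee form, that is $\theta_\omega = 0$ and $\theta_{\tilde\omega} = 0$. Feeding this into the conformal change formula $\theta_{\tilde\omega} = \theta_\omega + (n-1)d\phi$ from \eqref{eq:Leeconf} immediately gives $(n-1)d\phi = 0$. Since $n>1$ throughout this section, I conclude $d\phi = 0$, so $\phi$ is locally constant; using that the compact manifold $X$ is connected, $\phi$ is then a global constant and $\tilde\omega = e^\phi \omega$ is a genuine homothety of $\omega$, which is exactly the claim.

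I do not expect a real obstacle here: once Proposition~\ref{prop:Lee} and \eqref{eq:Leeconf} are available the argument is purely algebraic, and, in contrast with the existence halves of the proposition, it relies neither on Gauduchon's theorem nor on any analysis. The only step deserving a word of care is the passage from $d\phi = 0$ to $\phi$ constant, which is where connectedness of $X$ (rather than compactness per se) enters; I would simply record that $X$ is taken connected, so that the locally constant $\phi$ is globally constant, and that the hypothesis $n>1$ is genuinely needed to divide by $n-1$.
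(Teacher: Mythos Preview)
Your argument is correct, but it addresses only the final sentence of the proposition (uniqueness of the balanced metric in a given conformal class up to homothety). You do not touch parts i) and ii), which form the bulk of the statement: the equivalence of the dilatino equation with the conformally balanced equation \eqref{eq:LiYau}, the passage $\omega \mapsto \|\Omega\|_\omega^{1/(n-1)}\omega$ to a balanced metric, and its inverse $\tilde\omega \mapsto \|\Omega\|_{\tilde\omega}^{-2/(n-2)}\tilde\omega$ for $n\geqslant 3$, as well as the $n=2$ claim that the condition is conformally invariant and forces a K\"ahler Ricci-flat representative. The paper's proof, though only sketched, does run through all of these steps.

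Restricting to the uniqueness claim, your route genuinely differs from the paper's. The paper simply invokes Gauduchon's theorem \cite[Th.~I.14]{Gau1984}, which gives uniqueness (up to homothety) of the \emph{Gauduchon} representative in any hermitian conformal class on a compact complex manifold; since balanced implies Gauduchon, two balanced metrics in $\sigma$ are in particular two Gauduchon metrics in $\sigma$, hence homothetic. Your argument bypasses this entirely: from $\theta_\omega=\theta_{\tilde\omega}=0$ and \eqref{eq:Leeconf} you read off $(n-1)\,d\phi=0$ directly. This is more elementary and self-contained---it uses nothing beyond Proposition~\ref{prop:Lee} and the conformal law for the Lee form already recorded in the text---whereas the paper's appeal to Gauduchon's theorem imports an analytic result (uniqueness of the solution to a linear elliptic equation) that is overkill for the balanced case. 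The trade-off is that Gauduchon's theorem also yields uniqueness for the broader Gauduchon condition, which your argument does not. Your remark that compactness enters only through connectedness is accurate for this step; the paper presumably states compactness because that is the standing hypothesis for Gauduchon's theorem.
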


\begin{proof}
We sketch the proof and leave the details for the reader. First we use that \eqref{eq:dilatinosec2} is equivalent to
$$
\theta_\omega = - d \log \|\Omega\|_\omega,
$$
(in particular $\theta_\omega$ needs to be exact if there exists a solution). Thus, using \eqref{eq:Leeconf} the dilatino equation is equivalent to \eqref{eq:LiYau}, which holds if and only if $\tilde \omega = \|\Omega\|_\omega^{\frac{1}{n-1}} \omega$ is balanced. Conversely, for $n \geqslant 3$, $\tilde \omega$ is balanced if and only if $\omega =  \|\Omega\|_{\tilde \omega}^{\frac{-2}{n-2}} \tilde \omega$ solves the dilatino equation. The existence part of the statement follows from the change of the $(n-1,n-1)$-form $\|\Omega\|_\omega \omega^{n-1}$ under a conformal transformation of the metric. The uniqueness part follows by direct application of Gauduchon's Theorem \cite[Th. I.14]{Gau1984}. 
\end{proof}

The equivalence between \eqref{eq:dilatinosec2} and \eqref{eq:LiYau} implies that any solution of the dilatino equation has an associated class in cohomology. To be more precise, consider the Bott-Chern cohomology of $X$, defined by
$$
H^{p,q}_{BC}(X) = \frac{\Ker (d \colon \Omega^{p,q} \to \Omega^{p+1,q} \oplus \Omega^{p,q+1})}{\operatorname{Im} (dd^c \colon \Omega^{p-1,q-1} \to \Omega^{p,q})}.
$$
Since $d$ and $dd^c$ are real operators, the cohomology groups of bi-degree $(p,p)$ have a natural real structure
$$
H^{p,p}_{BC}(X,\RR) \subset H^{p,p}_{BC}(X).
$$

\begin{definition}\label{def:balancedclass}
Given a solution $\omega$ of the dilatino equation \eqref{eq:dilatinosec2}, its \emph{balanced class} is defined by
\begin{equation}\label{eq:balancedclass}
[\|\Omega\|_\omega \omega^{n-1}] \in H^{n-1,n-1}_{BC}(X,\RR).
\end{equation}
\end{definition}

\begin{remark}\label{rem:balanceddef}
Given a balanced metric $\tilde \omega$, there are infinitely many such metrics with the same balanced class $[\tilde \omega^{n-1}]$, as we can always take a real form $\varphi \in \Omega^{n-2,n-2}$ and deform $\tilde \omega^{n-1}$ by
$$
\Psi = \tilde \omega^{n-1} + dd^c \varphi.
$$
For $\varphi$ small we have that $\Psi$ is positive, and thus its $(n-1)$-th root is a balanced metric \cite{Michel}.
\end{remark}

A solution of the dilatino equation has an alternative interpretation, in terms of a connection with skew-torsion and restricted holonomy. Let $X$ be a complex manifold, with underlying smooth manifold $M$. 
A hermitian connection on $(X,g)$ is a linear connection on $TM$ such that $\nabla J = 0$ and $\nabla g = 0$, where $J$ is the almost complex on $M$ determined by $X$. Gauduchon observed in \cite{Gau1} that the \emph{Bismut connection} \cite{Bismut}
\begin{equation}\label{eq:Bismut}
\nabla^B = \nabla^g - \frac{1}{2}g^{-1}d^c\omega.
\end{equation}
is the unique hermitian connection on $X$ with skew-symmetric torsion. Here we regard the metric as an isomorphism $g \colon TM \to T^*M$ and $g^{-1}d^c\omega$ as $1$-form with values on the endomorphisms of $TM$. Note that the torsion of $\nabla^B$ is given by
$$
g T_{\nabla^B} = - d^c\omega \in \Omega^3.
$$

\begin{proposition}\label{prop:dilatino}
Let $(X,\Omega)$ be a Calabi-Yau manifold endowed with a hermitian metric $g$. If $g$ is a solution of the dilatino equation \eqref{eq:dilatinosec2}, then the Bismut connection $\nabla^B$ has restricted holonomy in the special unitary group
\begin{equation}\label{eq:hol}
hol(\nabla^B) \subset SU(n).
\end{equation}
The converse is true if $X$ is compact.
\end{proposition}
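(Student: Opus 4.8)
The plan is to reduce the holonomy reduction to the vanishing of the curvature induced by $\nabla^B$ on the canonical bundle $K_X$, and then to read off the dilatino equation from that curvature. Since $\nabla^B$ is hermitian it preserves $g$ and $J$, hence induces a unitary connection on the hermitian line bundle $K_X$; in particular $hol(\nabla^B) \subset U(n)$. As $SU(n) \subset U(n)$ is the stabiliser of a complex volume element, the restricted holonomy lies in $SU(n)$ if and only if this induced connection is flat. The Calabi-Yau datum provides a global non-vanishing section $\Omega$, so I would write $\nabla^B\Omega = a \otimes \Omega$ for a complex $1$-form $a \in \Omega^1_\CC$; then the curvature on $K_X$ is $da$, and the proposition becomes the statement that $da = 0$ is equivalent to the dilatino equation.

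To determine $a$, note first that unitarity yields $d\|\Omega\|_\omega^2 = (a + \bar a)\|\Omega\|_\omega^2$, whence $\Re a = d\log\|\Omega\|_\omega$. For the imaginary part I would use that $\Omega$ is $d$-closed --- it is $\bar\partial$-closed by holomorphicity and $\partial$-closed for type reasons, as $\Omega^{n+1,0} = 0$ --- together with the standard identity relating $d$ to $\nabla^B$ and its torsion, recalling that $g T_{\nabla^B} = -d^c\omega$. Applying this identity to $\Omega$ and projecting onto type $(n,1)$, the contraction of the $(1,2)$-form $\bar\partial\omega$ produces the $(0,1)$-part of the Lee form, and the computation should give $a^{0,1} = -\theta_\omega^{0,1}$. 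Combined with $\Re a = d\log\|\Omega\|_\omega$ this yields the closed expression
\begin{equation*}
\nabla^B\Omega = \big(d\log\|\Omega\|_\omega - i\,J\xi\big)\otimes\Omega, \qquad \xi := \theta_\omega + d\log\|\Omega\|_\omega ,
\end{equation*}
so that the curvature of $\nabla^B$ on $K_X$ is $da = -i\,d(J\xi)$. This torsion computation, identifying $a^{0,1}$ with the Lee form, is the technical heart and the point I expect to be most delicate.

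For the forward implication, suppose $g$ solves the dilatino equation. As in the proof of Proposition~\ref{prop:conformalbalanced} this is equivalent to $\theta_\omega = -d\log\|\Omega\|_\omega$, i.e. $\xi = 0$; hence $a = d\log\|\Omega\|_\omega$ is exact, the induced connection on $K_X$ is flat --- in fact $\Omega/\|\Omega\|_\omega$ is $\nabla^B$-parallel --- and $hol(\nabla^B) \subset SU(n)$. No compactness is used here.

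For the converse, assume $X$ compact and $da = 0$, so that $d(J\xi) = 0$. By the conformal law \eqref{eq:Leeconf}, $\xi = \theta_{\tilde\omega}$ is the Lee form of the rescaled metric $\tilde\omega = \|\Omega\|_\omega^{1/(n-1)}\omega$, so $d\tilde\omega^{n-1} = \xi \wedge \tilde\omega^{n-1}$. Integrating $d\big(J\xi \wedge \tilde\omega^{n-1}\big) = d(J\xi)\wedge\tilde\omega^{n-1} - J\xi\wedge d\tilde\omega^{n-1}$ over $X$ and using $d(J\xi)=0$ gives
\begin{equation*}
\int_X \xi \wedge J\xi \wedge \tilde\omega^{n-1} \;=\; -\int_X J\xi \wedge d\tilde\omega^{n-1} \;=\; 0 .
\end{equation*}
Since $\xi \wedge J\xi \wedge \tilde\omega^{n-1}$ is a sign-definite multiple of $|\xi|_{\tilde\omega}^2\,\mathrm{dvol}_{\tilde\omega}$, this forces $\xi = 0$, i.e. $g$ solves the dilatino equation. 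Compactness enters precisely through this application of Stokes' theorem, which upgrades the differential identity $d(J\xi) = 0$ to the pointwise vanishing $\xi = 0$; without it the converse fails.
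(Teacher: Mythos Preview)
Your strategy is correct and the overall architecture matches the paper's, but you leave the key computation unfinished and miss a shortcut. You acknowledge that extracting $a^{0,1}=-\theta_\omega^{0,1}$ from the torsion formula for $d\Omega$ is ``the technical heart'' and ``most delicate'', and indeed you do not carry it out. The paper bypasses this entirely by quoting Gauduchon's identity \eqref{eq:nablaC},
\[
\nabla^B\big|_{K_X}=\nabla^C - i\,d^*\omega\otimes\Id,
\]
together with the explicit Chern connection $\nabla^C=d+2\partial\log\|\Omega\|_\omega$ in the trivialisation by $\Omega$. This gives $a=2\partial\log\|\Omega\|_\omega - i\,d^*\omega$ in one line, from which $\Re a=d\log\|\Omega\|_\omega$ and $\Im a=\pm J\xi$ follow immediately---no torsion gymnastics needed. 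Your route via $d\Omega=0$ and the skew torsion $-d^c\omega$ would eventually reproduce this, but it is substantially longer and you have not actually done it.

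For the converse the two arguments genuinely differ. The paper assumes a global $\nabla^B$-parallel section $\psi=e^f\Omega$, extracts a function $\phi=f-\bar f$ with $d\phi=d^{\tilde *}\tilde\omega$ for the rescaled metric $\tilde\omega=\|\Omega\|_\omega^{1/(n-1)}\omega$, and concludes $\Delta_{\tilde\omega}\phi=0$, hence $\phi$ constant. Your argument works instead with the curvature condition $da=0$---which is \emph{exactly} the restricted holonomy hypothesis $hol(\nabla^B)\subset SU(n)$---and uses Stokes on $J\xi\wedge\tilde\omega^{n-1}$ to force $\xi=0$. This is a legitimate alternative; it is arguably better aligned with the statement, since restricted holonomy gives flatness of the induced connection on $K_X$ but not a priori a global parallel section. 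Both routes exploit compactness in the same spirit (integration by parts versus harmonicity on a closed manifold).
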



For the proof, we need a formula for the unitary connection induced by $\nabla^B$ on $K_X$ due to Gauduchon \cite[Eq. (2.7.6)]{Gau1}. Let $\nabla^C$ be the Chern connection of the hermitian metric on $K_X$ induced by $g$. Then,
\begin{equation}\label{eq:nablaC}
  \nabla^B = \nabla^C - i d^*\omega \otimes \Id.
\end{equation}
Recall that $\nabla^C$ is uniquely determined by the property $(\nabla^C)^{0,1} = \dbar$, where $\dbar$ is the canonical Dolbeault operator on the holomorphic line bundle $K_X$. Since $X$ is Calabi-Yau, we have an explicit formula for $\nabla^C$ in \eqref{eq:nablaC} in the holomorphic trivialization of $K_X$ given by $\Omega$, namely,
$$
\nabla^C = d + 2 \partial \log \|\Omega\|_\omega.
$$

\begin{proof}[Proof of Proposition \ref{prop:dilatino}]
Let $\psi$ be a smooth section of $K_X$. Then, there exists $f \in C^\infty(X,\CC)$ a smooth complex-valued function on $X$ such that $\psi = e^f \Omega$. Applying \eqref{eq:nablaC} we have
\begin{equation}\label{eq:nablaBpsi}
\nabla^B(\psi) = (df + 2 \partial \log \|\Omega\|_\omega - i d^*\omega) \otimes \psi.
\end{equation}
Assume that $\omega$ is a solution of \eqref{eq:dilatinosec2}. Then, by the previous equation
$$
\nabla^B(\psi) = (df + d \log \|\Omega\|_\omega) \otimes \psi
$$
and we can set $f = - \log \|\Omega\|_\omega$ to obtain a parallel section $\psi =  \|\Omega\|_\omega^{-1} \Omega$.

For the converse, if $\psi$ is parallel with respect to $\nabla^B$, applying \eqref{eq:nablaBpsi} we obtain
$$
d f = i d^*\omega - 2 \partial \log \|\Omega\|_\omega.
$$
Further, $\psi$ must have constant norm $\|\psi\|_\omega = t \in \RR_{>0}$ and it follows from \eqref{eq:normnform} that
$$
f + \overline{f} = 2\log t - 2\log \|\Omega\|_\omega.
$$
Hence, setting $\phi = f - \overline{f}$ we obtain
$$
d\phi = d^*\omega - d^c\log \|\Omega\|_\omega.
$$
It suffices to prove that $\phi$ is constant. For this, we define $\tilde \omega = \|\Omega\|_\om^{\frac{1}{n-1}}\omega$ and note that
$$
d\phi = - J \theta_{\tilde \omega} = d^{\tilde *} \tilde \omega,
$$
by the behaviour of the Lee under conformal rescaling \eqref{eq:Leeconf}. Applying now the operator $d^{\tilde *}$ on both sides of the equation $\Delta_{\tilde \om} \phi = d^{\tilde *}d\phi = 0$, and therefore $d \phi = 0$ since by assumption $X$ is compact.
\end{proof}

As we have just seen, the restriction of the holonomy of the Bismut connection to the special unitary group on a Calabi-Yau manifold is essentially equivalent to the existence of solutions of the dilatino equation \eqref{eq:dilatinosec2}. This equation for the K\"ahler form of the hermitian structure is strongly reminiscent of the complex Monge-Amp\`ere equation in K\"ahler geometry. To see this, assume for a moment that $X$ is a domain in $\CC^n$, $\Omega = dz_1 \wedge \ldots \wedge dz_n$ and that $g$ is K\"ahler, with
$$
\omega = dd^c \varphi
$$
for a smooth function $\varphi$. Then $d^*\omega = 0$ and the dilatino equation reduces to the complex Monge-Amp\`ere equation
$$
\log \det \partial_i\partial_{\overline{j}}\varphi = t,
$$
for a choice of constant $t$.
Relying on Yau's solution of the Calabi Conjecture \cite{Yau1977}, the previous observation gives a hint that \emph{Calabi-Yau metrics}, that is, K\"ahler metrics with vanishing Ricci tensor, provide examples of solutions for the dilatino equation.

\begin{example}\label{ex:CY}
Assume that $X$ is compact and k\"ahlerian. Then, by Yau's Theorem \cite{Yau1977}, $X$ admits a unique K\"ahler Ricci-flat metric $g$ on each K\"ahler class. Since $g$ is K\"ahler, 
the left hand side of \eqref{eq:dilatinosec2} vanishes. Further, Ricci-flatness implies that the holomorphic volume form $\Omega$ has to be parallel \cite[Prop. 1.22.6]{Gauduchon}, and therefore $g$ solves \eqref{eq:dilatinosec2}. Note that in this case $d^c \omega = 0$ and hence $\nabla^B = \nabla^C = \nabla^g$. 
\end{example}

To find non-K\"ahler examples of solutions of the dilatino equation we revisit the complex parallelizable manifold in Example \ref{example:balanced}.

\begin{example}\label{example:balanced2}
The compact complex manifold $X = G/\Gamma$ in Example \ref{example:balanced} is parallelizable, and hence admits a trivialization of the canonical bundle. Using the frame $\theta_1, \theta_2, \theta_3$ of $T^*X$ we consider
$$
\Omega = \theta_1\wedge \theta_2 \wedge \theta_3.
$$
Then, it can be readily checked that $\|\Omega\|_\omega$ is constant, and hence the balanced metric \eqref{eq:omegaparallel} is a solution of the dilatino equation \eqref{eq:dilatinosec2}. We note that the same exact argument works in an arbitrary complex parallelizable manifold.
\end{example}

\subsection{Balanced metrics and conifold transitions}\label{sec:conifoldtr}

We describe next a more sophisticated construction of non-k\"ahlerian Calabi-Yau threefolds due to Clemens \cite{Clemens} and Friedman \cite{Friedman} (for a review see \cite{Rossi}), which provides a strong motivation for the study of the Strominger system. These threefolds are typically not birational to k\"ahlerian manifolds (class $\mathcal{C}$) as they may have vanishing second Betti number. 
Thus, Alessandrini-Bassanelli Theorem \ref{th:birational}  cannot be applied to ensure the existence of balanced metrics. An interesting result due to Fu, Li and Yau \cite{FuLiYau} shows that the Calabi-Yau manifolds obtained via the Clemens-Friedman construction are balanced. 

Let $X$ be a smooth k\"ahlerian Calabi-Yau threefold with a collection of mutually disjoint smooth rational curves $C_1, \ldots, C_k$, with normal bundles isomorphic to
$$
\mathcal{O}_{\CC\PP^1}(-1) \oplus \mathcal{O}_{\CC\PP^1}(-1).
$$
Contracting the $k$ rational curves, we obtain a singular Calabi-Yau threefold $X_0$ with $k$ ordinary double-point singularities $p_1, \ldots, p_k$. Away from the singularities, we have a biholomorphism $X \backslash \bigcup_k C_k \cong X_0 \backslash \{p_1, \ldots, p_k\}$, while a neighbourhood of $p_j$ in $X_0$ is isomorphic to a neighbourhood of $0$ in
$$
\{ z_1^2 + z_2^2 + z_3^2 + z_4^2 = 0\} \subset \CC^4.
$$
By results of Friedman, Tian and Kawamata, if the fundamental classes $[C_j] \in H^{2,2}(X,\mathbb{Q})$ satisfy a relation
$$
\sum_j n_j[C_j] = 0,
$$
with $n_j \neq 0$ for every $j$, then there exists a family of complex manifolds $X_t$ over a disk $\Delta \subset \CC$, such that $X_t$ is a smooth Calabi-Yau threefold for $t\neq 0$, and the central fibre is isomorphic to $X_0$. For small values of $t$, the local model for this \emph{smoothing} 
is isomorphic to a neighbourhood of $0$ in
$$
\{ z_1^2 + z_2^2 + z_3^2 + z_4^2 = t \} \subset \CC^4.
$$

The construction of $X_t$ from $X$, typically denoted by a diagram of the form
$$
X \to X_0 \dashrightarrow X_t,
$$
is known in physics as \emph{Clemens-Friedman conifold transition} (see e.g. \cite{Rossi}). The smooth manifolds $X_t$ satisfy the $\partial \dbar$-lemma, but they are in general non-k\"ahlerian \cite{Friedman2}. Explicitly, Friedman observed that $\sharp_k(S^3 \times S^3)$  for any $k \geqslant 2$ can be given a complex structure in this way. The idea is to contract enough rational curves on $X$ so that $H^2(X_t,\RR) = 0$ for $t \neq 0$. 

\begin{example}[\cite{CGH}]\label{ex:Candelas}
Consider the complete intersection $X \subset \CC\PP^4 \times \CC\PP^1$ given by
\begin{align*}
(x_2^4 + x_4^4 - x_5^4)y_1 + (x_1^4 + x_3^4 + x_5^4)y_2 & = 0,\\
x_1 y_1 + x_2 y_2 & = 0,
\end{align*}
where $x_1, \ldots, x_5$ and $y_1,y_2$ are coordinates in $\CC\PP^4$ and $\CC\PP^1$, respectively. This is a Calabi-Yau threefold with $b_2(X) = 2$ (see e.g. \cite{Rossi}). Consider the blow up
$$
\psi \colon \hat \CC\PP^4 \to \CC\PP^4
$$ 
of $\CC\PP^4$ along the plane $\{x_1 = x_2 = 0 \}$, whose exceptional divisor is a $\CC\PP^1$-bundle over $\CC\PP^2$. Then, $X$ can be regarded as the proper transform of the singular hypersurface $X_0 \subset \CC\PP^4$ containing the given plane, defined by
$$
x_2 (x_2^4 + x_4^4 - x_5^4) - x_1 (x_1^4 + x_3^4 + x_5^4) = 0.
$$
Note that $X_0$ has $16$ ordinary double points $p_1, \ldots, p_{16}$, described by
$$
x_1 = x_2 = 0, \qquad  x_3^4 + x_5^4 = 0, \qquad  x_4^4 - x_5^4 = 0,
$$
and $\psi^{-1}(p_j) \cong \CC\PP^1$ for all $j =1, \ldots, 16$. Choosing now a small $t \in \CC$, we can consider the smooth quintic hypersurface $X_t \subset \CC\PP^4$ with equation
$$
x_2 (x_2^4 + x_4^4 - x_5^4) - x_1 (x_1^4 + x_3^4 + x_5^4) = t \sum_{i=1}^5 x_i^5,
$$
which defines a smoothing of $X_0$ (the so called \emph{generic quintic}). Note that $X_t$ is also a Calabi-Yau threefold, and we have decreased the Betti number $b_2(X_t) = 1$.
\end{example}

The main result in \cite{FuLiYau} states that the smoothing $X_t$ in a conifold transition admits a balanced metric, providing first examples of balanced metrics on the complex manifolds $\sharp_k(S^3 \times S^3)$.

\begin{theorem}[\cite{FuLiYau}]
For sufficiently small $t \neq 0$, $X_t$ admits a smooth balanced metric.
\end{theorem}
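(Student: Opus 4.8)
The plan is to prove balancedness of $X_t$ through Michelson's current-theoretic criterion, rather than by deforming a balanced metric across the transition (which is impossible here, since the conifold transition changes the topology and the central fibre $X_0$ is singular, so that neither Theorem \ref{th:birational} nor Wu's deformation result applies). Recall that the strengthening of Proposition \ref{prop:cohomol} due to Michelson \cite{Michel} characterises the balanced condition by currents: a compact complex threefold $Z$ carries a balanced metric if and only if it admits no non-zero positive current $T$ of bidimension $(1,1)$ --- equivalently of bidegree $(2,2)$ --- which is a boundary, that is, of the form $T = \partial S + \dbar \bar S$ for a current $S$ of type $(1,2)$. I would argue by contradiction: assuming $X_t$ fails to be balanced for a sequence $t_j \to 0$, I would manufacture from the obstructing currents a non-zero positive exact $(2,2)$-current on the K\"ahler threefold $X$, which is absurd.

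First I would fix the geometry of the family. Let $\pi \colon X \to X_0$ be the contraction of the curves $C_1, \ldots, C_k$, and let $\mathcal{X} \to \Delta$ be the total space of the smoothing, equipped with a fixed smooth hermitian metric restricting to metrics $h_t$ on each fibre $X_t$ and to $h_0$ on $X_0$. The target of the contradiction is the elementary fact that a K\"ahler threefold supports no non-zero positive $(2,2)$-boundary current: if $T = d\gamma \geqslant 0$, then $\int_X T \wedge \omega_X = \int_X d(\gamma \wedge \omega_X) = 0$ for the K\"ahler form $\omega_X$, while $T \wedge \omega_X$ is a non-negative measure, forcing $T = 0$. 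Since $X$ is projective Calabi-Yau it is K\"ahler, hence balanced, and this is the conclusion I aim to violate.

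Now suppose $X_{t_j}$ is not balanced for a sequence $t_j \to 0$. Michelson's criterion yields non-zero positive exact $(2,2)$-currents $T_j$ on $X_{t_j}$, which I normalise to unit $h_{t_j}$-mass. Viewing the $T_j$ as currents on the total space $\mathcal{X}$ and invoking weak-$*$ compactness of the unit-mass ball, I would extract a subsequential limit $T_\infty$, a positive $(2,2)$-current carried by the central fibre $X_0$. The boundary condition is preserved in the limit, since annihilation of closed test forms is weak-$*$ closed. Away from the nodes the biholomorphism $X \setminus \bigcup_i C_i \cong X_0 \setminus \{p_1, \ldots, p_k\}$ identifies $T_\infty$ with a positive exact current on $X \setminus \bigcup_i C_i$, which I would extend across the exceptional curves --- analytic subsets of complex codimension two --- by a Skoda--El Mir type extension theorem, obtaining a positive exact $(2,2)$-current $\widetilde T$ on $X$.

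The delicate point, and the step I expect to be the main obstacle, is to guarantee that $\widetilde T \neq 0$. As $t_j \to 0$ the conifold necks --- the shrinking vanishing three-spheres in the local model $\{z_1^2 + z_2^2 + z_3^2 + z_4^2 = t_j\}$ --- collapse onto the nodes $p_1, \ldots, p_k$, and a priori the entire mass of $T_j$ could concentrate there, so that $T_\infty$ vanishes or is supported on the collapsing cycles and annihilated upon transfer to $X$. To exclude this I would derive, from the explicit geometry of the smoothing, uniform lower bounds for the mass of $T_j$ on a fixed region bounded away from the necks, showing that a positive exact current cannot have all of its mass trapped in the shrinking region (its mass on the homologically trivial vanishing cycles being controlled by the neck geometry). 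Granting this non-degeneracy, $\widetilde T$ is a non-zero positive $(2,2)$-boundary current on the K\"ahler threefold $X$, contradicting the vanishing established above. Hence $X_t$ is balanced for all sufficiently small $t \neq 0$.
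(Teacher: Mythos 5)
Your strategy is genuinely different from the one the paper points to: the proof in \cite{FuLiYau} is not a duality-and-contradiction argument but an explicit construction. Fu, Li and Yau glue the square of the Candelas--de la Ossa Ricci-flat K\"ahler form on the local smoothing $\{z_1^2+z_2^2+z_3^2+z_4^2=t\}$ near each node to the square of a K\"ahler form induced from $X$ on $X_0\setminus\{p_1,\ldots,p_k\}$ away from the nodes, correct by exact $(2,2)$-terms so that the result is a closed $(2,2)$-form on $X_t$, prove positivity for small $t$ by estimates on the neck region, and then take the positive square root, using the same linear-algebra fact invoked in the proof of Proposition \ref{prop:submer}. Measured against this, your proposal has two gaps which I do not believe can be repaired softly.

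First, the step you yourself flag and then grant --- a uniform lower bound for the mass of $T_j$ away from the necks --- is not a technical verification but the entire analytic content of the theorem, and the proposal offers no mechanism for it. The obstruction currents provided by Michelson's criterion come from an abstract separation (Hahn--Banach) argument, and nothing ties their mass distribution to the geometry of the smoothing. Concentration at the nodes cannot be excluded by support theorems either: a positive current of bidimension $(1,1)$ need not be normal, and $\delta_{p_i}\otimes\tau$, for $\tau$ a positive $(2,2)$-covector at $p_i$, is a non-zero positive current supported at a single point; if $T_\infty$ is of this form, its transfer to $X\setminus\bigcup_i C_i$ is zero and no contradiction arises. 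There is also a structural reason to distrust any soft limiting argument: as the paper recalls, the balanced property is \emph{not} closed under holomorphic deformations \cite{COUV,FiOU}, so the specific conifold geometry must enter quantitatively --- which is precisely what is missing.

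Second, the currents Michelson's criterion produces are components of boundaries, $T=\partial S+\dbar \bar{S}$; they are positive but in general not $d$-closed, and this breaks the proposal at several points. Weak-$*$ convergence preserves the property of annihilating closed $(1,1)$-forms pulled back from the total space $\mathcal{X}$, but not the property of being a boundary, unless one has mass bounds on the primitives $S_j$, which one does not; and annihilation of forms coming from $\mathcal{X}$ does not suffice, because the K\"ahler form $\omega_X$ of $X$ does not extend to a closed form on $\mathcal{X}$ (the nearby fibres are non-K\"ahler), so the crucial pairing $\widetilde{T}(\omega_X)=0$ is never reached: exactness is only known against test forms compactly supported away from the curves $C_i$, and $\omega_X$ is not such a form. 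Moreover, the Skoda--El Mir theorem extends \emph{closed} positive currents across analytic sets; for non-closed positive currents one needs the far more delicate machinery of Alessandrini--Bassanelli \cite{AlBas1,AlBas2} (this is the technical heart of Theorem \ref{th:birational}), and even then the trivial extension can acquire extra terms supported on $\bigcup_i C_i$, destroying the exactness your final contradiction requires. Finally, a smaller point: $T_\infty$ is a current on $\mathcal{X}$ supported on $X_0$, and one needs a support argument (using positivity) before it may legitimately be regarded as a current on $X_0$ and restricted to $X_0\setminus\{p_1,\ldots,p_k\}$.
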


Reid speculated \cite{Reid} that all k\"ahlerian Calabi-Yau threefolds (that can be deformed to
Moishezon manifolds) are parametrized by a single universal moduli space in which families of
smooth Calabi-Yau threefolds of different homotopy types are connected by conifold transitions. In order to develop a metric approach to \emph{Reid's fantasy}, Yau has proposed to study special types of balanced metrics which endow the Calabi-Yau threefolds with a preferred geometry \cite{FuYau,LiYau}. Note here that, similarly as  in K\"ahler geometry, balanced metrics arise in infinite-dimensional families, each of them parametrized by a class in the \emph{balanced cone} in Bott-Chern cohomology \cite{FuXiao} (see Remark \ref{rem:balanceddef}). As we will see in Section \ref{sec:Strominger}, a natural way to rigidify balanced metrics is imposing conditions on the torsion $3$-form of the Bismut connection $d^c \omega$. An alternative (and in some sense orthogonal) approach in the literature is to fix the volume form $\omega^n/n!$ of the balanced metric \cite{SzToWe}. In particular, this gives special solutions of the dilatino equation, where the left and right hand side of \eqref{eq:dilatinosec2} vanish independently. 

\section{Hermite-Einstein metrics on balanced manifolds}\label{sec:HermEin}

\subsection{The Hermite-Einstein equation and stability}\label{sec:HK}

Let $X$ be a compact complex manifold of dimension $n$ endowed with a balanced metric $g$. Let $\cE$ be a holomorphic vector bundle over $X$ of rank $r$, with underlying smooth complex vector bundle $E$. We will denote by $\Omega^{p,q}(E)$ the space of $E$-valued $(p,q)$-forms on $X$.  The holomorphic structure on $E$ given by $\cE$ is equivalent to a Dolbeault operator
$$
\dbar_\cE \colon \Omega^0(E) \to \Omega^{0,1}(E)
$$
satisfying the integrability condition $\dbar_\cE^2 = 0$ (see e.g. \cite{Gauduchon}). 

Given a hermitian metric $h$ on $E$, there is an associated unitary \emph{Chern connection} $A$ compatible with the holomorphic structure $\cE$, uniquely defined by the properties
\begin{align*}
d_A^{0,1} & = \dbar_\cE,\\
d(s,t)_h & = (d_A s,t)_h + (s,d_A t)_h,
\end{align*}
for $s,t \in \Omega^0(E)$. Here, $d_A \colon \Omega^0(E) \to \Omega^1(E)$ is the covariant derivative defined by $A$. In a local holomorphic frame $\{e_j\}_{j=1}^r$, the Chern connection is given by the matrix-valued $(1,0)$-form
$$
h^{-1}\partial h,
$$
where $h_{ij} = (e_j,e_i)_h$. The curvature of the Chern connection is a $(1,1)$-form with values in the skew-hermitian endomorphisms $\End (E,h)$ of $E$, defined by
$$
F_h = d_A^2 \in \Omega^{1,1}(\End (E,h)).
$$
In a local holomorphic frame, we have the formula
\begin{equation}\label{eq:curvaturelocal}
F_h = \dbar (h^{-1}\partial h).
\end{equation}

\begin{definition}\label{def:HE}
The Hermite-Einstein equation, for a hermitian metric $h$ on $E$, is
\begin{equation}\label{eq:HE}
i \Lambda_\omega F_h = \lambda \Id.
\end{equation}
\end{definition}

In equation \eqref{eq:HE}, $\Lambda_\omega$ is the contraction operator \eqref{eq:Lambda}, $\Id$ denotes the identity endomorphism on $E$, and $\lambda \in \RR$ is a real constant. The Hermite-Einstein equation is a non-linear second-order partial differential equation for the hermitian metric $h$, as it follows from \eqref{eq:curvaturelocal}.

To understand the existence problem for the Hermite-Einstein equation, the first basic observation is that, in order for a solution to exist, the constant $\lambda \in \RR$ must take a specific value fixed by the topology of $(X,g)$ and $E$, in the following sense. Consider the balanced class of $\omega^{n-1}$ in de Rham cohomology
\begin{equation}\label{eq:balancedclassdR}
\tau := [\omega^{n-1}] \in H^{2n-2}(X,\RR),
\end{equation}
determined by the balanced metric $g$. Recall that the first Chern class $c_1(E)$ of $E$ is represented by
$$
c_1(E) = [i\tr F_h/2\pi] \in H^2(X,\ZZ),
$$
for any choice of hermitian metric $h$.

\begin{definition}\label{def:degree}
The $\tau$-degree of $E$ is
\begin{equation}\label{eq:slope}
\deg_\tau(E) = c_1(E) \cdot \tau,
\end{equation}
where $c_1(E) \cdot \tau \in H^{2n}(X,\RR) \cong \RR$ denotes the cup product in cohomology.
\end{definition}
Taking the trace in \eqref{eq:HE} and integrating against $ \frac{\omega^n}{n!} $ we obtain
\begin{equation}\label{eq:lambdavalue}
\lambda = \frac{2\pi}{(n-1)!}\frac{\deg_\tau(E)}{r \Vol_\omega},
\end{equation}
where $\Vol_\omega = \int_X \frac{\omega^n}{n!}$ is the volume. When $E$ is a line bundle, the Hermite-Einstein equation can always be solved for this value of $\lambda$.

\begin{proposition}
For a holomorphic line bundle $\cE$ the Hermite-Einstein equation \eqref{eq:HE} admits a unique solution $h$ up to homothety, provided that $\lambda$ is given by \eqref{eq:lambdavalue}.
\end{proposition}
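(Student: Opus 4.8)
The plan is to reduce \eqref{eq:HE} to a single scalar linear elliptic equation on functions and to solve it by the Fredholm alternative, with the balanced hypothesis entering through an integration-by-parts identity. Fix a background hermitian metric $h_0$ on $\cE$, with Chern curvature $F_{h_0}$. Since $\cE$ has rank one, every hermitian metric is of the form $h = e^f h_0$ for a real $f \in C^\infty(X)$, and by \eqref{eq:curvaturelocal} the curvature transforms as $F_h = F_{h_0} + \dbar\partial f$. Thus \eqref{eq:HE} is equivalent to
\[
P(f) \defeq i\Lambda_\omega \dbar\partial f = \lambda - i\Lambda_\omega F_{h_0} \eqqcolon \rho,
\]
where, using $\overline{i\dbar\partial f} = i\dbar\partial f$ for $f$ real, $P$ is a \emph{real}, second-order linear operator whose principal symbol equals that of the complex Laplacian. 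In particular $P$ is elliptic and has no zeroth-order term, so the constant functions lie in $\ker P$.

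Next I would record the key integral identity, which is where the balanced condition $d\omega^{n-1}=0$ is used. From the pointwise identity $\Lambda_\omega\alpha \,\tfrac{\omega^n}{n!} = \alpha\wedge\tfrac{\omega^{n-1}}{(n-1)!}$ for $(1,1)$-forms $\alpha$, together with $d(i\partial f\wedge\omega^{n-1}) = i\dbar\partial f\wedge\omega^{n-1}$ (the term $i\partial f\wedge d\omega^{n-1}$ vanishing by balancedness), Stokes' theorem yields
\[
\int_X P(f)\,\frac{\omega^n}{n!} = \frac{1}{(n-1)!}\int_X i\dbar\partial f\wedge\omega^{n-1} = 0
\]
for every $f$. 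Written through the $L^2$-pairing with respect to $\omega^n/n!$ this says $\langle P(f),1\rangle = \langle f, P^*1\rangle = 0$ for all $f$, hence $P^*1 = 0$; evaluating the formal adjoint $P^*$ on the constant $1$ returns its zeroth-order coefficient, so $P^*$ also has vanishing zeroth-order term.

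Then I would invoke elliptic theory on the compact connected manifold $X$. Since $P$ and $P^*$ are elliptic with no zeroth-order term, the strong (Hopf) maximum principle forces $\ker P = \ker P^* = \{\text{constants}\}$, both one-dimensional. The Fredholm alternative then gives that $P$ has closed range with
\[
\operatorname{image}(P) = (\ker P^*)^\perp = \Bigl\{\rho \in C^\infty(X) : \textstyle\int_X \rho\,\tfrac{\omega^n}{n!} = 0\Bigr\}.
\]
It remains only to check this one linear constraint for our $\rho$, and this is exactly the role of \eqref{eq:lambdavalue}: using $\int_X i\Lambda_\omega F_{h_0}\tfrac{\omega^n}{n!} = \int_X iF_{h_0}\wedge\tfrac{\omega^{n-1}}{(n-1)!} = \tfrac{2\pi}{(n-1)!}\,c_1(\cE)\cdot\tau = \tfrac{2\pi}{(n-1)!}\deg_\tau(\cE)$ (recall \eqref{eq:slope} and $\tau=[\omega^{n-1}]$) together with \eqref{eq:lambdavalue} for $r=1$ gives $\int_X\rho\,\tfrac{\omega^n}{n!} = \lambda\Vol_\omega - \tfrac{2\pi}{(n-1)!}\deg_\tau(\cE) = 0$. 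Hence a solution $f$ exists and $h = e^f h_0$ solves \eqref{eq:HE}.

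Finally, uniqueness up to homothety is immediate: if $h_1 = e^{f_1}h_0$ and $h_2 = e^{f_2}h_0$ both solve \eqref{eq:HE}, subtracting the scalar equations gives $P(f_1 - f_2) = 0$, so $f_1 - f_2 \in \ker P$ is constant and the two metrics differ by a positive scalar. Because $\cE$ is a line bundle the problem is linear, so there is no genuine analytic difficulty; the crux of the argument is rather conceptual, namely the identification of $\ker P^*$ and $\operatorname{image}(P)$, which rests squarely on the balanced identity $\int_X P(f)\,\omega^n/n! = 0$ and on the topological matching of $\lambda$ with $\deg_\tau(\cE)$. On a merely Gauduchon manifold this step would have to be modified, replacing $\deg_\tau$ by the Gauduchon degree and adjusting the integration by parts.
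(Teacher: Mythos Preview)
Your proof is correct and follows the same overall strategy as the paper: write $h = e^f h_0$, reduce \eqref{eq:HE} to the scalar equation $i\Lambda_\omega\dbar\partial f = \lambda - i\Lambda_\omega F_{h_0}$, and solve by Fredholm theory once the right-hand side is shown to be orthogonal to the kernel of the adjoint. The one substantive difference is in how the operator $P = i\Lambda_\omega\dbar\partial$ is analysed. The paper invokes Gauduchon's identity \cite[eq.~(25)]{Gau1984}, valid on balanced manifolds, that $2i\Lambda_\omega\dbar\partial = \Delta_\omega$; this makes $P$ self-adjoint outright, so $\ker P = \ker P^* = \RR$ is immediate from Hodge theory. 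You instead prove $P^*1 = 0$ directly via the integration-by-parts identity $\int_X P(f)\,\omega^n/n! = 0$ (using $d\omega^{n-1}=0$), and then appeal to the strong maximum principle to conclude that both $\ker P$ and $\ker P^*$ consist of constants. Your route is more elementary in that it does not quote the Gauduchon identity, and in principle it adapts more readily to non-self-adjoint situations; the paper's route is shorter and identifies $P$ with a familiar operator. Either way the conclusion and the role of the balanced hypothesis are the same.
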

\begin{proof}
Fixing a reference hermitian metric $h_0$ any other metric $h$ on the line bundle is given by $h = e^f h_0$ and, using this, equation \eqref{eq:HE} is equivalent to 
\begin{equation}\label{eq:iddbar}
i \Lambda_\omega \dbar \partial f = \lambda - i \Lambda_\omega F_h.
\end{equation}
By \cite[eq. (25)]{Gau1984}, the balanced condition can be alternatively written as an equality of differential operators on smooth functions on $X$
$$
2 i \Lambda_\omega \dbar \partial = \Delta_\omega := dd^* + d^*d,
$$
and therefore $i \Lambda_\omega \dbar \partial$ is self-adjoint, elliptic, with Kernel given by $\RR \subset C^\infty(X)$. By \eqref{eq:lambdavalue}, 
$$
\int_X (\lambda - i \Lambda_\omega F_h) \frac{\omega^n}{n!} = 0,
$$
so $\lambda - i \Lambda_\omega F_h$  is orthogonal to $\RR$ in $C^\infty(X)$. We conclude that \eqref{eq:iddbar} has a unique smooth solution $f$ with $\int_X f \frac{\omega^n}{n!} = 0$.
\end{proof}

For higher rank bundles, the existence of solutions of the Hermite-Einstein equation relates to an algebraic numerical condition for $\cE$ -- originally related to the theory of quotients of algebraic varieties by complex reductive Lie groups, known as Geometric Invariant Theory \cite{MFK}. To state the precise result, we need to extend Definition \ref{def:degree} to arbitrary torsion-free coherent sheaves of $\mathcal{O}_X$-modules (for the basic definitions we refer to \cite[Section 5-6]{Kob}). Given such a sheaf $\cF$ of rank $r_\cF$, the determinant of $\cF$, defined by 
$$
\det \cF := (\Lambda^{r_\cF} \cF)^{**}
$$
is a holomorphic line bundle 
(such that $\cF = \det \cF$ when $\cF$ is torsion-free of rank $1$), and we can extend Definition \ref{def:degree} setting
$$
\deg_\tau (\cF) := \deg_\tau (\det \cF).
$$
We define the $\tau$-slope of a torsion-free coherent sheaf $\cF$ of $\mathcal{O}_X$-modules by
\begin{equation}\label{eq:slopecoh}
\mu_\tau(\cF) = \frac{\deg_\tau(\cF)}{r_\cF}.
\end{equation}

\begin{definition}\label{def:stable}
A torsion-free sheaf $\cF$ over $X$ is 
\begin{enumerate}[i)]
\item $\tau$-(semi)\emph{stable} if for every subsheaf $\cF' \subset \cF$ with $0 < r_{\cF'} < r_{\cF}$ one has 
$$\mu_\tau(\cF') < (\leqslant) \mu_\tau(\cF),$$
\item  $\tau$-\emph{polystable} if $\cF = \bigoplus_j \cF_j$ with $\cF_j$ stable and $\mu_\tau(\cF_i) = \mu_\tau(\cF_j)$ for all $i,j$.
\end{enumerate}
\end{definition}

When $X$ is projective and $g$ is a K\"ahler Hodge metric, with K\"ahler class associated to a hyperplane section of $X$, this definition coincides with the original definition of slope stability due to Mumford and Takemoto (see e.g. \cite{MFK}).

We can state now the characterization of the existence of solutions for the Hermite-Einsten equation \eqref{eq:HE}.

\begin{theorem}\label{th:LiYau}
There exists a Hermite-Einstein metric $h$ on $\cE$ if and only if $\cE$ is $\tau$-polystable.
\end{theorem}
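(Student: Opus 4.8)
The statement is the Hitchin--Kobayashi correspondence in the balanced setting, so the plan is to treat the two implications separately: the implication ``Hermite--Einstein $\Rightarrow$ $\tau$-polystable'' by a Chern--Weil argument, and the much deeper converse by the continuity/heat-flow method of Donaldson--Uhlenbeck--Yau, as extended to the Gauduchon setting by Li and Yau.

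For the easy direction I would start from the Chern--Weil formula for the $\tau$-degree of a saturated coherent subsheaf $\cF \subset \cE$. Off an analytic subset of codimension $\geqslant 2$, $\cF$ is a subbundle with smooth $h$-orthogonal projection $\pi \in \Omega^0(\End(E,h))$ satisfying $\pi = \pi^* = \pi^2$ and the weak-holomorphicity relation $(\Id - \pi)\dbar\pi = 0$. The Gauss--Codazzi identity then gives, pointwise, $i\Lambda_\omega \tr F_\cF = \tr(\pi \, i\Lambda_\omega F_h) - |\dbar\pi|^2_\omega$, and integrating against $\omega^n/n!$ --- here is where I would use the balanced condition $d\omega^{n-1}=0$, which makes $\tau = [\omega^{n-1}]$ a genuine cohomology class and legitimizes the Chern--Weil integration by parts --- yields
$$
\deg_\tau(\cF) = \frac{(n-1)!}{2\pi}\int_X \left( \tr(\pi \, i\Lambda_\omega F_h) - |\dbar\pi|_\omega^2 \right)\frac{\omega^n}{n!}.
$$
Substituting the Hermite--Einstein equation $i\Lambda_\omega F_h = \lambda\Id$, using $\tr\pi = r_\cF$ together with the value of $\lambda$ from \eqref{eq:lambdavalue}, this becomes
$$
\mu_\tau(\cF) = \mu_\tau(\cE) - \frac{(n-1)!}{2\pi \, r_\cF}\int_X |\dbar\pi|_\omega^2 \, \frac{\omega^n}{n!} \leqslant \mu_\tau(\cE),
$$
so $\cE$ is $\tau$-semistable. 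Moreover equality forces $\dbar\pi = 0$, whence $\pi$ is a holomorphic projection and $\cE$ splits $h$-orthogonally and holomorphically as $\cF \oplus \ker\pi$, each factor inheriting a Hermite--Einstein metric of the same slope; an induction on the rank then delivers $\tau$-polystability.

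For the converse, the plan is to produce the metric by the continuity method, equivalently by Donaldson's heat flow $h^{-1}\dot h = -2(i\Lambda_\omega F_h - \lambda\Id)$. Assuming $\cE$ stable, I would establish long-time existence of the flow and then convergence to a solution of \eqref{eq:HE}; for the polystable case one runs the argument on each stable factor and reassembles. The key technical inputs are: a priori $C^0$ control of the metric along the flow, higher-order estimates by elliptic/parabolic regularity, and --- the decisive point --- the Uhlenbeck--Yau regularity theorem identifying a weakly holomorphic $L^2_1$-projection obtained as an analytic limit with an honest coherent subsheaf. The main obstacle is precisely this last step combined with the a priori estimate: if the flow fails to converge, one must extract from the degenerating metrics a weakly holomorphic subbundle and show, via Uhlenbeck--Yau regularity, that it defines a destabilizing subsheaf $\cF$ with $\mu_\tau(\cF) \geqslant \mu_\tau(\cE)$, contradicting stability. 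The balanced (more generally Gauduchon) hypothesis enters throughout to guarantee that $\deg_\tau$ is well defined and that the integral slope inequalities survive the limiting process; for the hard estimates I would appeal to the analysis of Li--Yau.
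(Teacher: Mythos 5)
Your proposal is correct, but on the one direction the paper actually proves it takes a genuinely different (and more classical) route. For Hermite--Einstein $\Rightarrow$ $\tau$-polystable you use the Chern--Weil/Gauss--Codazzi identity and the Atiyah--Bott principle that curvature decreases in holomorphic subbundles; this is precisely the ``standard proof'' that the paper mentions (see \cite{lt,AB}) and deliberately sets aside. The paper's own argument (Theorem \ref{th:necessity}, following \cite{MR}) is a moment-map computation: starting from the same $L^2_1$ weakly holomorphic projection $\pi$ of a reflexive subsheaf $\cF$, it forms $\zeta = i(\pi - \nu(\Id-\pi))$ with $\nu = r_\cF/(r-r_\cF)$, degenerates the Dolbeault operator along $\dbar_{\cE_t} = e^{it\zeta}\cdot\dbar_\cE$, and evaluates the maximal weight $w(\dbar_\cE,\zeta)$ in two ways: directly, giving $2\pi(n-1)!\,\tfrac{r r_\cF}{r-r_\cF}\,\bigl(\mu_\tau(\cE)-\mu_\tau(\cF)\bigr)$, and by integrating the monotonicity formula $\tfrac{d}{dt}\langle\mu(\dbar_{\cE_t}),\zeta\rangle = |Y_{\zeta}|^2 \geq 0$ from the initial value $\langle\mu(\dbar_\cE),\zeta\rangle = 0$ supplied by the Hermite--Einstein condition, giving $\tfrac{1}{2}(1+\nu)\|\pi\dbar_\cE(\Id-\pi)\|_{L^2}^2 \geq 0$. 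That last quantity is, up to a constant, your $\int_X|\dbar\pi|^2_\omega$, so the two proofs agree at the level of the basic estimate and share all remaining ingredients: the balanced condition justifying the degree computation, elliptic regularity to smooth $\pi$ in the equality case, the orthogonal holomorphic splitting, and induction on the rank. Your version is shorter and gauge-theory-free; the paper's is longer but exhibits the Kempf--Ness/symplectic-quotient picture, which it then exploits to put a K\"ahler structure on the moduli space of stable bundles at the end of Section \ref{sec:gauge}. For the converse (the hard direction) the paper gives no proof at all, only the references \cite{NS,Kob1982,D3,UY,Buchdahl,LiYauHYM}; your heat-flow/continuity-method outline, resting on Uhlenbeck--Yau regularity and on Li--Yau's estimates in the hermitian (Gauduchon) setting, is a faithful summary of that literature, though as written it is a sketch of the strategy rather than a self-contained argument.
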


This result was first proved by Narasimhan and Seshadri in the case of curves \cite{NS}. The `only if part' was proved in higher dimensions by Kobayashi \cite{Kob1982}. The `if part' was proved for algebraic surfaces by Donaldson \cite{D3}, and for higher dimensional compact K\"ahler manifolds by Uhlenbeck and Yau \cite{UY}. Buchdahl extended Donaldson's result to arbitrary compact complex surfaces in \cite{Buchdahl}, and Li and Yau generalized Uhlenbeck and Yau's theorem to any compact complex hermitian manifold in \cite{LiYauHYM}. 

\begin{remark}\label{rem:degree}
The statement of Li-Yau Theorem \cite{LiYauHYM} for an arbitrary hermitian metric $g$ uses that the existence of a Hermite-Einstein metric only depends on the conformal class of $g$ (see \cite[Lem. 2.1.3]{lt}), and therefore $g$ can be assumed to be Gauduchon. As for the stability condition, when $g$ is a Gauduchon metric one defines the $g$-slope of $\cE$ by $\mu_g(\cE) = \deg_g(\cE)/r$, where
$$
\deg_g(\cE) =  \frac{i}{2\pi}\int_X  \tr F_h \wedge \omega^{n-1}
$$
for a choice of hermitian metric $h$. The $g$-degree $\deg_g(\cE)$ is independent of $h$ by Stokes Theorem, since $dd^c \omega^{n-1} = 0$ and $\tr F_h = \tr F_{h_0} + \dbar \partial f$ for $h = e^f h_0$. 
In general, $\deg_g$ is not topological, and depends on the holomorphic structure of $\det \cE$ (e.g. for $n=2$ the $g$-degree is topological if and only if $X$ is k\"ahlerian \cite[Cor. 1.3.14]{lt}).
\end{remark}

On general grounds, an effective check of any of the two equivalent conditions in Theorem \ref{th:LiYau} is a difficult problem. We comment on a class of examples, given by deformations of (essentially) tangent bundles of algebraic Calabi-Yau manifolds. We postpone the examples of solutions of the Hermite-Einstein equations on non-k\"ahlerian balanced manifolds to Section \ref{sec:existence}.

\begin{example}\label{ex:CYHE}
Given a K\"ahler-Einstein metric $g$ on $X$, it is easy to check that the induced Chern connection on $TX$ is Hermite-Einstein. In particular, by Yau's solution of the Calabi Conjecture \cite{Yau1977}, the tangent bundle of any k\"ahlerian Calabi-Yau manifold is polystable with respect to any K\"ahler class, and it is stable if $b_1(X) = 0$. Since being stable is an open condition, any deformation of the tangent bundle of a simply connected Calabi-Yau manifold is stable. As concrete examples (see e.g. \cite{Huyb}): in dimension $2$, the tangent bundle of a $K3$ surface has unobstructed deformations and, in dimension $3$, the tangent bundle of a simply connected complete intersection Calabi-Yau threefold has unobstructed deformations. For a quintic hypersurface $X \subset \CC\PP^4$, the space of deformations of $TX$ is $224$-dimensional. For $X$ the generic quintic (see Example \ref{ex:Candelas}), Huybrechts has proved that the polystable bundle $TX \oplus \mathcal{O}_X$ admits stable holomorphic deformations \cite{Huyb}, with the following property: they have non-trivial restriction to any rational curve of degree one.
\end{example}

It is interesting to compare the previous example with a result by Chuan \cite{Chuan}. Let $X \to X_0 \dashrightarrow X_t$ be a conifold transition between Calabi-Yau threefolds, as in Section \ref{sec:conifoldtr}. Let $\cE$ be a stable holomorphic vector bundle over $X$ with $c_1(\cE) = 0$, and assume that $\cE$ is trivial in a neighbourhood of the exceptional rational curves. Assume further the exists a stable bundle $\cE_t$ over $X_t$, given by a deformation of the push-forward of $\cE$ along $X \to X_0$. Then, Chuan has proved under this hypothesis that $\cE_t$ is stable with respect to the balanced metric constructed by Fu, Li and Yau \cite{FuLiYau}.

\subsection{Gauge theory, K\"ahler reduction and the necessity of stability}\label{sec:gauge}

In this section we give a (lengthy) geometric proof of the `only if part' of Theorem \ref{th:LiYau}, following \cite{MR}. This method of proof is based on the correspondence between symplectic quotients and GIT quotients, given by the Kemp-Ness Theorem \cite{KN} and uses some of the ingredients required for the `if part' of Theorem \ref{th:LiYau}. We note that the standard proof of the `only if part' 
(see e.g. \cite{lt}) -- which turns on the principle that ``curvature decreases in holomorphic sub-bundles and increases in holomorphic quotients'' \cite{AB} -- is shorter, and does not use any gauge-theoretical methods. Nonetheless, we find the proof of \cite{MR} more pedagogical, and better suited for the purposes of these notes.
By the end, we discuss briefly the implications of this method for the geometry of the moduli space of stable vector bundles.

Let $E$ be a smooth complex vector bundle of rank $r$ over $X$. Let $\cG^c$ be the gauge group of $E$, that is, the group of diffeomorphisms of $E$ projecting to the identity on $X$ and $\CC$-linear on the fibres. Consider the space $\cC$ of Dolbeault operators
$$
\dbar_\cE \colon \Omega^0(E) \to \Omega^{0,1}(E)
$$
on $E$, which is a complex affine space modelled on $\Omega^{0,1}(\End E)$. Then, $\cG^c$ acts on $\cC$ by
$$
g \cdot \dbar_\cE = g \dbar_\cE g^{-1},
$$
and preserves the (constant) complex structure on $\cC$.

Fix now a hermitian metric $h$ on $E$. Consider the space $\cA$ of unitary connections on $(E,h)$, which is an affine space modelled on $\Omega^1(\End (E,h))$. The \emph{unitary gauge group} $\cG \subset \cG^c$, given by automorphisms of $E$ preserving $h$, acts on $\cA$ by
$$
g \cdot d_A = g d_A g^{-1},
$$
preserving the symplectic structure
\begin{equation}\label{eq:SymfC}
\omega_{\cA}(a,b) = - \int_X \tr(a \wedge b) \wedge \omega^{n-1}
\end{equation}
for $a,b \in T_A \cA = \Omega^1(\End (E,h))$, with $A\in\cA$. 

There is a real affine bijection between the two infinite-dimensional spaces $\cA$ and $\cC$, defined by 
\begin{equation}\label{eq:affine}
\cA \to \cC \colon A \to \dbar_A = (d_A)^{0,1},
\end{equation}
with inverse given by the Chern connection of $h$ in $\cE = (E,\dbar_\cE)$. Under this bijection, the integrability condition $\dbar_A^2 = 0$ is equivalent to
\begin{equation}\label{eq:FA02}
F_A^{0,2} = 0,
\end{equation}
and the complex structure on $\cC$ translates to $a \to Ja$, for $J$ the almost complex structure on $X$. The symplectic form \eqref{eq:SymfC} is compatible with this complex structure, and induces a K\"ahler structure on $\cC$.

Using the bijection \eqref{eq:affine} and the K\"ahler form \eqref{eq:SymfC}, we can now give a geometric interpretation to the Hermite-Einstein equation \eqref{eq:HE}. The following observation is due to Atiyah and Bott \cite{AB} when $X$ is a Riemann surface, and was generalized by Donaldson \cite{D3} to higher dimensional K\"ahler manifolds. As pointed out by L\"ubke and Teleman \cite[Sec. 5.3]{lt}, remarkably the construction only needs that $g$ is balanced.

\begin{proposition}\label{prop:mmap}
The $\cG$-action on $\cA$ is Hamiltonian, with equivariant moment map $\mu\colon \cA\to
(\LieG)^*$ given by
\begin{equation}
\label{eq:momentmap-cG}
  \langle\mu(A),\zeta\rangle = - \int_X \tr \zeta  (\Lambda_\omega F_A + i\lambda \Id) \frac{\omega^n}{n},
\end{equation}
where $\zeta \in \Omega^0(\End (E,h)) \cong \Lie \cG$.
\end{proposition}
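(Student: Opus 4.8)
The plan is to verify the two defining properties of an equivariant moment map: the infinitesimal condition $d\langle\mu(\cdot),\zeta\rangle = \iota_{X_\zeta}\omega_{\cA}$ for every $\zeta\in\Lie\cG = \Omega^0(\End(E,h))$, where $X_\zeta$ denotes the fundamental vector field of the $\cG$-action, together with the equivariance $\mu(g\cdot A) = \Ad^*_g\mu(A)$. The whole computation rests on one algebraic identity: for any $2$-form $\beta$ on $X$,
$$
\beta\wedge\omega^{n-1} = (\Lambda_\omega\beta)\frac{\omega^n}{n},
$$
which follows from the primitive decomposition (only the $(1,1)$-part of $\beta$ contributes to both sides, since $\beta^{2,0}\wedge\omega^{n-1}=\beta^{0,2}\wedge\omega^{n-1}=0$ and $\Lambda_\omega$ kills the $(2,0)$ and $(0,2)$ parts), and which also explains the normalization $\omega^n/n$ in \eqref{eq:momentmap-cG}.

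First I would compute the fundamental vector field. Differentiating $e^{t\zeta}\cdot d_A = e^{t\zeta}d_A e^{-t\zeta}$ at $t=0$ gives $X_\zeta|_A = [\zeta,d_A] = -d_A\zeta \in \Omega^1(\End(E,h)) = T_A\cA$. Next, along the path $A+ta$ the curvature varies by its linearization, $\tfrac{d}{dt}\big|_{t=0}F_{A+ta} = d_A a$. Hence, applying the identity above to $\tr(\zeta\,d_A a)$ (a $2$-form, with $\zeta$ a $0$-form that commutes with both $\Lambda_\omega$ and $\tr$),
$$
d\langle\mu(\cdot),\zeta\rangle|_A(a) = -\int_X \tr(\zeta\,\Lambda_\omega d_A a)\frac{\omega^n}{n} = -\int_X \tr(\zeta\,d_A a)\wedge\omega^{n-1},
$$
whereas, directly from \eqref{eq:SymfC},
$$
\iota_{X_\zeta}\omega_{\cA}|_A(a) = \omega_{\cA}(-d_A\zeta,a) = \int_X \tr(d_A\zeta\wedge a)\wedge\omega^{n-1}.
$$

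The heart of the argument is to match these two expressions, and this is precisely where the balanced hypothesis enters. Since $\tr$ annihilates the connection terms, the Leibniz rule for $d_A$ yields the ordinary identity $d\,\tr(\zeta a) = \tr(d_A\zeta\wedge a) + \tr(\zeta\,d_A a)$. Wedging with $\omega^{n-1}$, using $d\omega^{n-1}=0$ (Definition \ref{def:KBG}) and Stokes' theorem,
$$
\int_X \big[\tr(d_A\zeta\wedge a) + \tr(\zeta\,d_A a)\big]\wedge\omega^{n-1} = \int_X d\big(\tr(\zeta a)\wedge\omega^{n-1}\big) = 0,
$$
so the two displayed quantities agree and the infinitesimal moment map condition holds. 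I expect this Stokes step — together with the careful bookkeeping of signs and form-degrees in the Leibniz rule — to be the main technical point: the vanishing of the term $\tr(\zeta a)\wedge d\omega^{n-1}$ is exactly the reason why the balanced condition (and not merely the Gauduchon condition $dd^c\omega^{n-1}=0$) is what the construction requires.

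Finally, equivariance follows formally. The $\cG$-action conjugates the curvature, $F_{g\cdot A} = gF_Ag^{-1}$, and commutes with the pointwise operator $\Lambda_\omega$, while $g$ fixes $\Id$; by $\Ad$-invariance of the trace,
$$
\langle\mu(g\cdot A),\zeta\rangle = -\int_X \tr\big(g^{-1}\zeta g\,(\Lambda_\omega F_A + i\lambda\Id)\big)\frac{\omega^n}{n} = \langle\mu(A),\Ad_{g^{-1}}\zeta\rangle,
$$
which is the asserted equivariance $\mu(g\cdot A)=\Ad^*_g\mu(A)$. It remains only to observe that $\Lambda_\omega F_A + i\lambda\Id$ is genuinely skew-Hermitian (as $\Lambda_\omega$ is real and $i\lambda\Id$ is anti-Hermitian), so that $\mu$ indeed takes values in $(\Lie\cG)^*$ through the invariant trace pairing.
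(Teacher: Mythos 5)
Your proof is correct and follows essentially the same route as the paper's: identify the infinitesimal action $Y_\zeta(A) = -d_A\zeta$, linearize the curvature via $\delta_a F_A = d_A a$, and integrate by parts using the balanced condition $d\omega^{n-1}=0$, with equivariance coming from $F_{g\cdot A} = g F_A g^{-1}$. The only difference is one of exposition: you spell out the pointwise identity $\beta\wedge\omega^{n-1} = (\Lambda_\omega\beta)\,\tfrac{\omega^n}{n}$ and the Leibniz/Stokes step, which the paper's proof uses implicitly.
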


\begin{proof}
The $\cG$-equivariance follows from $F_{g \cdot A} = g F_A g^{-1}$ for any $g \in \cG$. Thus, given $a \in \Omega^1(\End (E,h))$ we need to prove that
$$
\langle d\mu(a),\zeta\rangle = \omega_\cA(Y_\zeta,a)
$$
where $Y_\zeta$ denotes the infinitesimal action of $\zeta$, given by
$$
Y_\zeta(A) = - d_A \zeta.
$$
Using $\delta_a F_A = d_A a$ and $d \omega^{n-1} = 0$, integration by parts gives
\begin{align*}
\langle d\mu(a),\zeta\rangle & = - \int_X \tr \zeta  d_A a \wedge \omega^{n-1}\\
& = \int_X \tr d_A \zeta \wedge  a \wedge \omega^{n-1} = \omega_\cA(Y_\zeta,a).
\end{align*}
\end{proof}

\begin{definition}\label{def:HYM}
A unitary connection $A \in \cA$ is called a \emph{Hermite-Yang-Mills connection} if it satisfies $A \in \mu^{-1}(0)$ and $\dbar_A^2 = 0$, that is,
$$
i \Lambda_\omega F_A = \lambda \Id, \qquad \qquad F_A^{0,2} = 0.
$$
\end{definition}

For a Hermite-Yang-Mills connection $A$, the metric $h$ on $\cE = (E,\dbar_A)$ is Hermite-Einstein. Using this fact, we can now prove the following.

\begin{theorem}\label{th:necessity}
If there exists a Hermite-Einstein metric $h$ on $\cE$, then $\cE$ is $\tau$-polystable.
\end{theorem}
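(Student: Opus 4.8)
The plan is to exploit the moment-map description of the Hermite-Einstein equation established in Proposition \ref{prop:mmap}, together with the Kempf-Ness principle: at a zero of the moment map $\mu$ every ``maximal weight'' of the complexified action of $\cG^c$ is nonnegative, and this nonnegativity is exactly the slope inequality of Definition \ref{def:stable}. Concretely, a Hermite-Einstein metric $h$ produces an integrable connection $A$ with $\mu(A)=0$, i.e. a Hermite-Yang-Mills connection in the sense of Definition \ref{def:HYM}, and each destabilizing subsheaf of $\cE$ will be encoded as a direction in $\Lie\cG^c$ along which the associated weight is governed by its $\tau$-slope. Integrability $F_A^{0,2}=0$ guarantees that $(E,\dbar_A)\cong\cE$, so that holomorphic subobjects are intrinsic to $A$.

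The computational core is a Chern-Weil identity for subobjects. First I would reduce to subbundles: a saturated coherent subsheaf $\cF'\subset\cE$ is, off an analytic subset $Z\subset X$ of complex codimension $\geqslant 2$, a holomorphic subbundle, represented by the smooth orthogonal projection $\pi\in\Omega^0(\End(E,h))$ onto it, with $\pi^2=\pi=\pi^*$ and $(\Id-\pi)\dbar_A\pi=0$. For such a projection the first Chern form of the subbundle yields
\begin{equation*}
\deg_\tau(\cF') = \frac{i}{2\pi}\int_{X}\tr(\pi\,\Lambda_\omega F_A)\,\frac{\omega^n}{n} \;-\; \frac{1}{2\pi}\int_{X}|\dbar_A\pi|^2\,\frac{\omega^n}{n!},
\end{equation*}
where the second fundamental form $\dbar_A\pi$ contributes the manifestly nonnegative second term (its precise constant being immaterial), and the integrals are taken over $X\setminus Z$. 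Inserting the Hermite-Einstein condition $i\Lambda_\omega F_A=\lambda\Id$ and $\tr\pi=\rk(\cF')$ into the first term, and using the value of $\lambda$ from \eqref{eq:lambdavalue}, a direct computation gives
\begin{equation*}
\frac{i}{2\pi}\int_{X}\tr(\pi\,\Lambda_\omega F_A)\,\frac{\omega^n}{n} = \frac{\lambda}{2\pi}\,\rk(\cF')\int_{X}\frac{\omega^n}{n} = \mu_\tau(\cE)\,\rk(\cF').
\end{equation*}
Hence $\deg_\tau(\cF')\leqslant\mu_\tau(\cE)\,\rk(\cF')$, that is $\mu_\tau(\cF')\leqslant\mu_\tau(\cE)$, which is $\tau$-semistability. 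This is precisely where the hypothesis $d\omega^{n-1}=0$ is used: to make $\deg_\tau$ well defined and to justify the integration by parts behind the Chern-Weil identity, exactly as in Proposition \ref{prop:mmap}.

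For polystability I would analyse the equality case. If $\mu_\tau(\cF')=\mu_\tau(\cE)$ for some $\cF'$, the displayed formula forces $\dbar_A\pi=0$ on $X\setminus Z$; combined with $\pi=\pi^*$ this forces $\partial_A\pi=0$ as well, hence $d_A\pi=0$, so $\pi$ extends to a $d_A$-parallel holomorphic projection. Thus $\cE$ splits orthogonally and holomorphically as $\cF'\oplus(\cF')^\perp$, each summand carrying the restricted Hermite-Einstein metric of the same slope $\mu_\tau(\cE)$. Iterating on the summands, whose ranks strictly decrease, decomposes $\cE$ into stable bundles of equal slope, which is $\tau$-polystability.

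The main obstacle is the reduction to the subbundle picture: passing from an arbitrary coherent subsheaf to a projection with enough regularity to run Chern-Weil, and controlling the singular locus $Z$, requires the theory of weakly holomorphic subbundles and the regularity results underlying the Uhlenbeck-Yau correspondence. In the moment-map language this is the step that makes the maximal weight $\lim_{t\to\infty}\langle\mu(e^{ts}\cdot A),\zeta\rangle$ well defined and identifies its limit with the $\tau$-slope; once this is in place, the remainder is the linear-algebraic bookkeeping of the Kempf-Ness mechanism.
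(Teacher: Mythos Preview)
Your argument is correct, but it is not the one the paper gives. You frame things in Kempf--Ness language and then execute the direct Chern--Weil inequality: the curvature of the induced connection on a holomorphic subbundle is $\pi F_A\pi$ minus a nonnegative second-fundamental-form term, so under $i\Lambda_\omega F_A=\lambda\Id$ one reads off $\mu_\tau(\cF')\leqslant\mu_\tau(\cE)$ immediately. The paper explicitly flags this as the ``standard proof \ldots which turns on the principle that curvature decreases in holomorphic sub-bundles'' and says it is shorter; it then deliberately chooses the longer route. Namely, the paper builds the one-parameter family $\dbar_{\cE_t}=e^{it\zeta}\cdot\dbar_\cE$ for $\zeta=i(\pi-\nu(\Id-\pi))$, computes the maximal weight $w(\dbar_\cE,\zeta)=\lim_{t\to\infty}\langle\mu(\dbar_{\cE_t}),\zeta\rangle$ as a constant times $\mu_\tau(\cE)-\mu_\tau(\cF)$, and derives the sign from monotonicity $\tfrac{d}{dt}\langle\mu(\dbar_{\cE_t}),\zeta\rangle=|Y_\zeta|^2\geqslant 0$ together with $\langle\mu(\dbar_\cE),\zeta\rangle=0$. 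What your approach buys is brevity and transparency of the inequality; what the paper's approach buys is an explicit illustration of the Kempf--Ness mechanism (zeros of $\mu$ have nonnegative weights), which is the pedagogical point of \S\ref{sec:gauge}. Both rely on the same analytic input you identify, the weakly holomorphic subbundle description of reflexive subsheaves, and both treat the equality case by showing $\pi\dbar_\cE(\Id-\pi)=0$ forces $\pi$ to be globally smooth and $\dbar_\cE$-parallel; the paper does this via elliptic regularity for $i\Lambda_\omega\dbar_\cE\partial_\cE$, which is what underlies your extension of $\pi$ across $Z$.
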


\begin{proof}
Let $h$ be a Hermite-Einstein metric on $\cE$ and $\cF \subset \cE$ a coherent subsheaf with $0 < r_\cF < r$. We can assume that $\cF$ is reflexive \cite[Prop. 1.4.5]{lt}. Using a characterization of reflexive sheaves in terms \emph{weakly holomorphic subbundles} \cite{Simpson,UY} (see also \cite[p. 81]{lt}), there exists an analytic subset $S \subset X$ of codimension $\geqslant 2$ and $\pi \in L^2_1(\End E)$ such that
\begin{equation}\label{eq:conditionspi}
\pi^* = \pi = \pi^2, \qquad (\Id - \pi) \dbar_\cE \pi = 0
\end{equation}
on $L^1(\End E)$, $\pi_{|X \backslash S}$ is smooth and satisfies \eqref{eq:conditionspi}, and 
$$
\cF' = \cF_{|X \backslash S} = \operatorname{Im} \pi_{|X \backslash S}
$$ 
is a holomorphic subbundle of $\cE' = \cE_{|X \backslash S}$ . Furthermore, the curvature of $h_{|X \backslash S}$ on $\cF'$ defines a closed current on $X$ which represents $-i2\pi c_1(\cF) \in H^2(X,\CC)$. 

Using the orthogonal projection $\pi$, we can define a \emph{weak element} in the Lie algebra of $\cG$ by
$$
\zeta = i(\pi - \nu(\Id - \pi)).
$$
Associated to $\zeta$, there is a $1$-parameter family of (singular) Dolbeault operators 
$$
\dbar_{\cE_t} = e^{it\zeta}\cdot \dbar_\cE,
$$
and, since $\CC^*\Id \subset \cG^c$ acts trivially on $\cC$, we can assume the normalization
$$
\nu = \frac{r_\cF}{r - r_\cF}.
$$
We want to calculate the \emph{maximal weight}
\begin{equation}\label{eq:weightineq}
w(\dbar_\cE,\zeta):= \lim_{t \to +\infty} \langle \mu(\dbar_{\cE_t}),\zeta\rangle = \lim_{t \to +\infty} - \int_X \tr (\zeta F_{h,\dbar_{\cE_t}})\omega^n/n.
\end{equation}
Note that we can do the calculation in the smooth locus of $\pi$, since $S$ is codimension $2$. Away from the singularities of $\pi$, we have
$$
\dbar_{\cE_t} = \pi \dbar_\cE \pi + (\Id - \pi) \dbar_\cE (\Id - \pi) + e^{-t(1+\nu)}\pi \dbar_\cE (\Id - \pi)
$$
and therefore
$$
\dbar_{E_\infty} := \lim_{t \to +\infty} \dbar_{\cE_t} = \pi \dbar_\cE \pi + (\Id - \pi) \dbar_\cE (\Id - \pi),
$$
which corresponds to the direct sum
$$
(E, \dbar_{E_\infty})_{|X \backslash S} \cong \cF' \oplus \cE' / \cF'.
$$
Using this, we have
\begin{equation}\label{eq:weight}
\begin{split}
w(\dbar_\cE,\zeta) &= - \int_X  (i \tr \Lambda_\omega F_{h,\dbar_{\cF'}})\omega^n/n + \nu \int_X  (i \tr \Lambda_\omega F_{h,\dbar_{\cE'/\cF'}})\omega^n/n\\
& = 2\pi (n-1)! \(- \deg_\tau (\cF) + \frac{r_\cF}{r - r_\cF} (\deg_\tau (\cE) - \deg_\tau (\cF))\)\\
& = 2\pi (n-1)! \frac{r r_\cF}{r - r_\cF}(\mu_\tau(\cE) - \mu_\tau(\cF)).
\end{split}
\end{equation}
The key point of the proof is the monotonicity of $\langle \mu(\dbar_{\cE_t}),\zeta \rangle$, which follows from the positivity of the K\"ahler form \eqref{eq:SymfC},
$$
\frac{d}{dt} \langle \mu(\dbar_{\cE_t}),\zeta \rangle = |Y_{\zeta|\dbar_{\cE_t}}|^2 = (1+\nu)^2e^{-2t(1+\nu)}\|\pi \dbar_\cE (\Id - \pi)\|_{L_2}^2
$$
combined with the Hermite-Einstein condition, which gives  $\langle \mu(\dbar_\cE),\zeta \rangle = 0$. Combining these two facts,
\begin{equation}\label{eq:weightineq2}
w(\dbar_\cE),\zeta) =  \int_0^\infty |Y_{\zeta|e^{it\zeta}u_{\underline s}}|^2dt = \frac{1}{2}(1 + \nu) \|\pi \dbar_\cE (\Id - \pi)\|_{L_2}^2 \geq 0,
\end{equation}
and therefore $\mu_\tau(\cE) \geqslant \mu_\tau(\cF)$ by \eqref{eq:weight}. We conclude that $\cE$ is semistable. 

Suppose now that $\cE$ is not stable and that we have an equality in \eqref{eq:weightineq2}. Then, $\pi \dbar_\cE (\Id - \pi) = 0$ and
$$
\dbar_\cE(\pi) := \dbar_\cE\pi - \pi \dbar_\cE = \dbar_\cE \pi - \pi \dbar_\cE (\pi + (\Id - \pi)) = 0,
$$
and therefore $\pi^*$ is in the Kernel of the elliptic operator $i \Lambda_\omega \dbar_\cE \partial_\cE$ (see \cite[Lem. 7.2.3]{lt}) and hence it is smooth on $X$. We conclude that $\cF$ and $\cE / \cF$ are holomorphic vector bundles, and we have an orthogonal decomposition
$$
\cE \cong \cF \oplus \cE / \cF
$$
with $\mu_\tau(\cE) = \mu_\tau(\cF) = \mu_\tau(\cE/\cF)$. In addition, $\cF$ and $\cE / \cF$ inherit Hermite-Einstein metrics by restriction. Induction on the rank of $\cE$ completes the argument.
\end{proof}

To conclude, we discuss briefly the implications of the existence of the previous infinite-dimensional K\"ahler structure, for the geometry of the moduli space of stable vector bundles and Hermite-Yang-Mills connections. Let
$$
\cC^s \subset \cC
$$
be the subset of integrable Dolbeault operators $\dbar_\cE$ which define $\tau$-stable holomorphic vector bundles $\cE$. Two integrable Dolbeault operators are in the same $\cG^c$-orbit if and only if define isomorphic vector bundles, and therefore the $\cG^c$-action preserves $\cC^s$. We define the \emph{moduli space of $\tau$-stable vector bundles} by the quotient
$$
\cC^s/\cG^c.
$$ 
This quotient has a natural Hausdorff topology, and can be endowed with a finite dimensional complex analytic structure (which may be non-reduced) \cite[Cor. 4.4.4]{lt}.

Let $\cA^* \subset \cA$ be the $\cG$-invariant subset of irreducible connections which satisfy the integrability condition \eqref{eq:FA02}. Recall that a connection $A$ is irreducible if the Kernel of the induced covariant derivative $d_A$ in $\End (E,h)$ equals $i \RR \Id$. For $A \in \mu^{-1}(0) \cap \cA^*$, $h$ is a Hermite-Einstein metric on $\cE = (E,\dbar_A)$. We define the \emph{moduli space of irreducible Hermite-Yang-Mills connections} as the quotient
$$
\mu^{-1}(0) \cap \cA^*/\cG.
$$ 
This set has a natural Hausdorff topology, and can be endowed with a finite dimensional real analytic structure (which may be non-reduced) \cite[Prop. 4.2.7]{lt}. Furthermore, by Proposition \ref{prop:mmap} the moduli space inherits a real-analytic symplectic structure away from its singularities \cite[Cor. 5.3.9]{lt}.

Building on the proof of Theorem \ref{th:necessity}, one can prove that for $\mu^{-1}(0) \cap \cA^*$ one has $\dbar_A \in \cC^s$ (see \cite[Remark 2.3.3]{lt}) and therefore there is a natural map
$$
\mu^{-1}(0) \cap \cA^*/\cG \to \cC^s/\cG^c.
$$
This map induces a real analytic isomorphism by Theorem \eqref{th:LiYau}, and a K\"ahler structure on $\cC^s/\cG^c$ away from its singularities (see \cite[Cor. 4.4.4]{lt} and \cite[Cor. 5.3.9]{lt}).

\section{The Strominger system}\label{sec:Strominger}

\subsection{Definition and first examples}\label{sec:defi}

Let $(X,\Omega)$ be a Calabi-Yau manifold of dimension $n$, with underlying smooth manifold $M$ and almost complex structure $J$. Let $\cE$ be a holomorphic vector bundle over $X$, with underlying smooth complex vector bundle $E$. To define the Strominger system, we consider integrable Dolbeault operators $\dbar_T$, that is, satisfying $\dbar_T^2 = 0$, on the smooth complex vector bundle $(TM,J)$.

\begin{definition}\label{def:Stromingersystem}
The \emph{Strominger system}, for a hermitian metric $g$ on $(X,\Omega)$, a hermitian metric $h$ on $\cE$, and an integrable Dolbeault operator $\dbar_T$ on $(TM,J)$, is
\begin{equation}\label{eq:Stromingersystem}
\begin{split}
\Lambda_\omega F_h & = 0,\\
\Lambda_\omega R & = 0,\\
d^*\omega - d^c \log \|\Omega\|_\omega & = 0,\\
dd^c \omega - \alpha \(\tr R \wedge R - \tr F_h \wedge F_h\) & = 0. 
\end{split}
\end{equation} 
\end{definition}

Here, $\alpha$ is non-vanishing real constant and $R = R_{g,\dbar_T}$ denotes the curvature of the Chern connection of $g$, regarded as a hermitian metric on the holomorphic vector bundle $\cT = (TM,J,\dbar_T)$. The first two equations in the system correspond to the Hermite-Einstein condition for the curvatures $F_h$ and $R$ with vanishing constant $\lambda$, with respect to the K\"ahler form $\omega$ (see Definition \ref{def:HE}). The third equation, involving the K\"ahler form $\omega$ and the holomorphic volume form $\Omega$, is the dilatino equation \eqref{eq:dilatinosec2}. The new ingredient in the system is an equation for $4$-forms, known as the \emph{Bianchi identity}
\begin{equation}\label{eq:bianchi}
dd^c \omega - \alpha \(\tr R \wedge R - \tr F_h \wedge F_h\) = 0,
\end{equation}
which intertwines the exterior differential of the torsion $- d^c\omega$ of the Bismut connection of $g$, with the curvatures $F_h$ and $R$. 

\begin{remark}
The second equation in \eqref{eq:Stromingersystem}, that is, $\Lambda_\omega R = 0$, is often neglected in the literature. In this case, $R$ is typically taken to be the Chern connection of $g$ on the holomorphic tangent bundle $TX$ (see e.g. \cite{LiYau}). Motivation for considering the Hermite-Einstein condition for $R$ comes from physics, and it will be explained in Section \ref{sec:sugra}.
\end{remark}

Fixing the holomorphic structure $\dbar_T$ on the smooth complex vector bundle $(TM,J)$ essentially determines the hermitian metric which solves the Hermite-Einstein equation on $(TM,J,\dbar_T)$ (see Section \ref{sec:gauge}), and therefore lead us to an overdetermined system of equations. To put the bundles $E$ and $(TM,J)$ on equal footing, it is convenient to take a gauge-theoretical point of view, by fixing the hermitian metric $h$ on $E$ and substitute the unknowns $h$ and $\dbar_T$ in Definition \ref{def:Stromingersystem} by a unitary connection $A$ on $(E,h)$ and a unitary connection $\nabla$ on $(TM,J,g)$, with curvature $R_\nabla$ (cf. Section \ref{sec:gauge}). This lead us to the following equivalent definition of the Strominger system.

\begin{definition}\label{def:Stromingersystem2}
The \emph{Strominger system}, for a hermitian metric $g$ on $(X,\Omega)$, a unitary connection $A$ on $(E,h)$ and a unitary connection $\nabla$ on $(TM,J,g)$, is
\begin{equation}\label{eq:Stromingersystem2}
\begin{split}
\Lambda_\omega F_A & = 0, \qquad F_A^{0,2} = 0 \\
\Lambda_\omega R_\nabla & = 0, \qquad R_\nabla^{0,2} = 0 \\
d( \|\Omega\|_\omega \omega^{n-1}) & = 0,\\
dd^c \omega - \alpha \(\tr R_\nabla \wedge R_\nabla - \tr F_A \wedge F_A\) & = 0. 
\end{split}
\end{equation} 
\end{definition}

From this alternative point of view, the Strominger system couples a pair of Hermite-Yang-Mills connections $A$ and $\nabla$ (see Definition \ref{def:HYM}) with a conformally balanced metric $\omega$, by means of the Bianchi identity \eqref{eq:bianchi}. For the equivalence between \eqref{eq:Stromingersystem} and \eqref{eq:Stromingersystem2}, we use Li-Yau characterization of the dilatino equation \eqref{eq:dilatinosec2} in terms of the conformally balanced equation \eqref{eq:LiYau}.

The system \eqref{eq:Stromingersystem2} makes more transparent three types of necessary conditions for the existence of solutions of the Strominger system. Firstly, for $(X,\Omega,E)$ to admit a solution of the Strominger system there are some evident cohomological obstructions on the Chern classes
\begin{equation}\label{eq:c1}
\deg_\tau(E) = 0, \qquad  c_1(X) = 0,
\end{equation}
and also
\begin{equation}\label{eq:c2BC}
ch_2(E) = ch_2(X) \in H^{2,2}_{BC}(X,\RR),
\end{equation}
where $ch_2(E)$ and $ch_2(X)$ denote the second Chern character of $E$ and $X$. On a general complex manifold, the condition \eqref{eq:c2BC} depends on the complex structure of $X$, since the natural map
$$
H^{2,2}_{BC}(X,\RR) \to H^4(X,\RR)
$$
may have a Kernel. Secondly, we have two further conditions on the complex structure on $X$, that is, the complex manifold must have trivial canonical bundle and it must be balanced. Recall that the balanced condition for $X$ can be expressed as a positivity condition on the homology of $X$, which involves complex currents (see Proposition \ref{prop:cohomol} and \cite{Michel}). Finally, if $(g,\nabla,A)$ is a solution with balanced class
$$
\tau = [\|\Omega\|_\omega \omega^{n-1}] \in H^{n-1,n-1}_{BC}(X,\RR),
$$
Theorem \ref{th:LiYau} implies that the holomorphic bundle $\cE = (E,\dbar_A)$ and the holomorphic bundle $\cT = (TM,J,\dbar_\nabla)$ must be $\tau$-polystable.

In the rest of this section we discuss some basic examples of solutions of the Strominger system. We postpone more complicated existence results to Section \ref{sec:existence}. Let us start with complex dimension $1$. 

\begin{example}\label{ex:curve}
When $n=1$, $X$ is forced to be an elliptic curve and the Bianchi identity is an empty condition. Furthermore, the dilatino equation reduces to the K\"ahler Ricci-flat condition on $X$ (see Section \ref{sec:dilatino}). Hence, by Theorem \ref{th:LiYau} solutions of the Strominger system with a prescribed K\"ahler class are given by degree-zero polystable holomorphic vector bundles over the elliptic curve $X$ (see e.g. \cite{Tu}).
\end{example}

The next example shows that the system can always be solved for Calabi-Yau surfaces, provided that \eqref{eq:c1} and \eqref{eq:c2BC} are satisfied. Note that a compact Calabi-Yau surface must be a $K3$ surface or a complex torus, and therefore is always k\"ahlerian. Hence, the condition \eqref{eq:c2BC} in this case is topological. We follow an argument of Strominger in \cite{Strom} and do not assume that the connection $\nabla$ is unitary (to the knowledge of the author, with the unitary assumption there is no general existence result for $n=2$).

\begin{example}\label{ex:surface}
Let $(X,\Omega)$ be a compact Calabi-Yau manifold of dimension $2$. Let $g$ be a K\"ahler Ricci-flat metric on $X$, with K\"ahler form $\omega$. By Example \ref{ex:CY}, $g$ solves the dilatino equation. Let $\nabla$ be a unitary Hermite-Yang-Mills connection on the smooth hermitian bundle $(TM,J,g)$ (e.g. we can take the Chern connection of $g$). Let $(E,h)$ be a smooth complex hermitian vector bundle satisfying \eqref{eq:c1} and \eqref{eq:c2BC}, with a Hermite-Yang-Mills connections $A$. We want to show that we can find a metric $\tilde g$ on the conformal class of $g$ solving the Strominger system. Consider $\tilde g = e^f g$ for $f \in C^\infty(X)$. Using that $n = 2$ we have
$$
\|\Omega\|_{\tilde \omega} \tilde \omega^{n-1} = \|\Omega\|_\omega \omega^{n-1},
$$
and therefore $\tilde \omega$ solves the dilatino equation. Furthermore, $\nabla$ and $A$ are Hermite-Yang-Mills connections also for $\tilde \omega$ (note that $\nabla$ is no longer $\tilde \omega$-unitary). Hence, to solve the Strominger system \eqref{eq:Stromingersystem2} with this ansatz, we just need to solve the Bianchi identity \eqref{eq:bianchi} for $\tilde g$. Now, using that $g$ is K\"ahler, \eqref{eq:bianchi} is equivalent to
$$
\Delta_\omega (e^f) = \Lambda_\omega^2 \alpha (\tr R_\nabla \wedge R_\nabla - \tr F_A \wedge F_A).
$$
The obstruction to solve this equation is $ch_2(E) = ch_2(X)$ in $H^{4}(X,\RR)$, which holds by assumption. Note that we can assume our solution to be positive, since $X$ is compact. As a concrete example, we can consider $X$ to be a $K3$ surface and take $\cT = TX$ and $\cE$ a small holomorphic deformation of $TX$ or $TX \oplus \mathcal{O}_X$ \cite{Huyb}, and apply Theorem \ref{th:LiYau}.
\end{example}

We discuss next the arguably most basic examples of solutions of the Strominger system in dimension $n \geq 3$.

\begin{example}\label{ex:standard}
Let $(X,\Omega)$ be a compact k\"ahlerian Calabi-Yau manifold of dimension $n$. Let $g$ be a K\"ahler Ricci-flat metric on $X$. By Example \ref{ex:CY}, $g$ solves the dilatino equation and by Example \ref{ex:CYHE}, the Levi-Civita connection $\nabla^g$ is Hermite-Einstein. Set $\nabla = \nabla^g$ and denote by $h_0$ a constant hermitian metric on the trivial bundle $\oplus_{i=1}^{r-n} \mathcal{O}_X$ over $X$. Define
$$
\cE = TX \bigoplus (\oplus_{i=1}^{r-n} \mathcal{O}_X),
$$
that we consider endowed with the hermitian metric $g \oplus h_0$, with Chern connection $A = \nabla \oplus d$. Then, it is immediate that $g \oplus h_0$ is Hermite-Einstein. Finally, the Bianchi identity \eqref{eq:bianchi} is satisfied, because $tr R^2 = tr F_A^2$ and $dd^c \omega = 0$, since $g$ is K\"ahler.
\end{example}

When $n = r = 3$, these are called \emph{standard embedding solutions} in the physics literature, based on the natural monomorphism of Lie algebras
$$
\mathfrak{su}(3) \rightarrow \mathfrak{e}_8,
$$
where $\mathfrak{e}_8$ is the Lie algebra of the compact exceptional group $E_8$.

\subsection{Existence results}\label{sec:existence}

The Strominger system is a fully non-linear coupled system of partial differential equations for the hermitian metric $g$, and the unitary connections $\nabla$ and $A$. Note that the system is of mixed order, since the Hermite-Yang-Mills equations are of first order in $\nabla$ and $A$, the dilatino equation is of order one in the K\"ahler form $\omega$, while the Bianchi identity is of second order in $\omega$. The most demanding and less understood condition of the Strominger system is, indeed, the Bianchi identity \eqref{eq:bianchi}. In dimension $n \geqslant 3$, this condition is the ultimate responsible of the non-K\"ahler nature of this PDE problem, as the non-vanishing of the \emph{Pontryagin term} $\operatorname{tr} R_\nabla^2 - \operatorname{tr} F_A^2$ prevents the hermitian form $\omega$ to be closed and hence allows the complex manifold $X$ to be non-k\"ahlerian. To the present day, we have a very poor understanding of the Bianchi identity from an analytical point of view.

The first solutions of the system for $\cE$ a stable holomorphic vector bundle with rank $r = 4,5$ over an algebraic Calabi-Yau threefold were found by Li and Yau \cite{LiYau} (cf. Example \ref{ex:standard}). Solutions in non-k\"ahlerian threefolds were first obtained by Fu and Yau \cite{FuYau}, on suitable torus fibrations over a $K3$ surface. Further solutions in non-k\"ahlerian homogeneous spaces, specially on nilmanifolds, have been found over the last years (see \cite{FIVU,FeiYau,Grant,OUV} and references therein). For examples in non-compact threefolds we refer to \cite{FTY,Fei1,Fei2,FIUVa}.

There are essentially three known methods to solve the Strominger system in a compact complex threefold: by perturbation in k\"ahlerian manifolds, by reduction in a non-k\"ahlerian fibration over a K\"ahler manifold, and the method of invariant solutions in homogeneous spaces. The aim of this section is to illustrate these three methods with concrete results and examples. By the end, we will comment on a conjecture by Yau, which is one of the main open problems in this topic.

We start with a general result on k\"ahlerian manifolds \cite{AGF1}, that builds in the seminal work of Li and Yau in \cite{LiYau}.

\begin{theorem}[\cite{AGF1}]\label{thm:existence0}
Let $(X,\Omega)$ be a compact Calabi-Yau threefold endowed with a K\"ahler Ricci-flat metric $\omega_\infty$ with holonomy $SU(3)$. Let $\cE$ be a holomorphic vector bundle over $X$ satisfying \eqref{eq:c1} and \eqref{eq:c2BC}. If $\cE$ is stable with respect to $[\omega_\infty^2]$, then there exists a $1$-parameter family of solutions $(h_\delta,\omega_\delta,\dbar_\delta)$ of the Strominger system \eqref{eq:Stromingersystem} such that $\frac{\omega_\delta}{\delta}$ converges to $\omega_\infty$ as $\delta \to \infty$.
\end{theorem}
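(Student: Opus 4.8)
The plan is to construct the solutions by a perturbation argument around the large-volume limit, treating $t = 1/\delta$ as a small parameter and applying the implicit function theorem in suitable H\"older (or Sobolev) spaces. The limiting data at $\delta = \infty$ is supplied by the K\"ahler Ricci-flat structure: by Example \ref{ex:CY} the metric $\om_\infty$ solves the dilatino equation, by Example \ref{ex:CYHE} its Chern connection $\nabla_\infty = \nabla^{g_\infty}$ on $TX$ is Hermite-Einstein, and since $\cE$ is stable with respect to $[\om_\infty^2]$, Theorem \ref{th:LiYau} provides a Hermite-Einstein metric $h_\infty$ on $\cE$ with Chern connection $A_\infty$. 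The holonomy hypothesis $SU(3)$ forces $b_1(X) = 0$ and hence, by Example \ref{ex:CYHE}, the stability of $TX$, so both bundles are simple. The scaling that underlies the statement is the following: since $dd^c \om_\infty = 0$, writing $\om_\delta = \delta\, \om_\infty + \eta_\delta$ the Bianchi identity \eqref{eq:bianchi} becomes $dd^c \eta_\delta = \alpha(\tr R_\nabla \wedge R_\nabla - \tr F_A \wedge F_A)$, whose right-hand side remains of order $O(1)$ in $\delta$; thus $\eta_\delta = O(1)$ and $\om_\delta/\delta \to \om_\infty$ as $\delta \to \infty$.

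I would then decouple the system as follows. For any conformally balanced metric $\om$ close to $\delta\, \om_\infty$ in balanced class, the stability of $\cE$ and of $TX$ is an open condition, so Theorem \ref{th:LiYau} produces Hermite-Yang-Mills data depending smoothly on $\om$: a Hermite-Einstein metric $h = h(\om)$ on $\cE$ and, allowing the holomorphic structure $\dbar_T$ on $(TM,J)$ to deform, a Hermite-Einstein Chern connection with curvature $R = R(\om)$. This reduces the Strominger system to the Bianchi identity \eqref{eq:bianchi} coupled with the conformally balanced constraint \eqref{eq:LiYau} for the metric alone. Fixing the balanced class proportional to $[\om_\infty^2]$ (recall $\|\Om\|_{\om_\infty}$ is constant by Ricci-flatness) and using the parametrization of Remark \ref{rem:balanceddef}, I would look for $\|\Om\|_\om \om^2 = c_\delta\, \om_\infty^2 + dd^c \varphi$ for a real $(1,1)$-form $\varphi$ and take the positive square root, turning the problem into a single nonlinear elliptic PDE for $\varphi$ with $t = 1/\delta$ a regular parameter after rescaling.

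The crux is the implicit function theorem at $t = 0$. In the limit the equation for $\varphi$ linearizes to $dd^c \varphi = \alpha(\tr R_{\nabla_\infty} \wedge R_{\nabla_\infty} - \tr F_{A_\infty} \wedge F_{A_\infty})$, which is solvable precisely because its right-hand side is $dd^c$-exact; this is exactly the cohomological hypothesis \eqref{eq:c2BC}, $ch_2(\cE) = ch_2(X)$ in $H^{2,2}_{BC}(X,\RR)$. The main obstacle is to prove that the full linearized operator of the coupled system is surjective with a uniformly bounded right inverse: beyond the $dd^c$-type operator from the Bianchi identity, the linearization mixes in the derivatives of the connection maps $\om \mapsto (h(\om), R(\om))$, whose kernels and cokernels are controlled by $H^1(X, \End \cE)$ and $H^1(X, \End TX)$ together with the infinitesimal Hermite-Einstein data. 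It is here that the $SU(3)$ holonomy assumption is essential, since it guarantees that $TX$ is stable and that $\cE$ and $TX$ are simple, $H^0(\End) = \CC$, which pins down the kernel of the linearization and lets the residual coupling terms be absorbed. Once the linearized operator is an isomorphism transverse to the gauge symmetries, the implicit function theorem yields a smooth family $(h_\delta, \om_\delta, \dbar_\delta)$ for all sufficiently large $\delta$, with $\om_\delta/\delta \to \om_\infty$, completing the proof.
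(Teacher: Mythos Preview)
Your proposal is correct and follows essentially the same route as the paper's sketch: rescale $\omega \to \delta\omega$ so that the Bianchi identity acquires the small parameter $\epsilon = \alpha/\delta$, identify the $\epsilon = 0$ limit as the K\"ahler Ricci-flat metric together with Hermite--Yang--Mills connections on $\cE$ and $TX$, and perturb via the Implicit Function Theorem, with the condition \eqref{eq:c2BC} being precisely the first-order obstruction. The paper likewise notes that the holomorphic structure on $\cE$ stays fixed while $\dbar_T$ is shifted by a complex gauge transformation, matching your handling of the $TX$ side; your additional step of decoupling (solving the Hermite--Yang--Mills equations first as smooth functions of $\omega$, then reducing to an equation for $\varphi$ in a fixed balanced class) is a reasonable implementation detail not spelled out in the paper, but it does not change the underlying strategy.
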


To explain the main idea, we note that the Strominger system is invariant under rescaling of the hermitian form $\omega$, except for the Bianchi identity. Given a positive real constant $\delta$, if we change $\omega\to\delta\omega$ and define $\epsilon:= \alpha/\delta$ we obtain a new system, with all the equations unchanged except for the Bianchi identity, which reads
$$
dd^c\omega - \epsilon(\tr(R_\nabla\wedge R_\nabla)-\tr(F_A\wedge F_A)) =0.
$$
In the \emph{large volume limit} $\delta\to\infty$, a solution of the system is given by prescribing degree zero stable holomorphic vector bundles $\cE$ and $\cT$ over a Calabi-Yau threefold with hermitian metric $\omega_\infty$ satisfying
$$
d(\|\Omega\|_{\omega_\infty}\omega_\infty^2) = 0, \qquad dd^c \omega_\infty =0.
$$
The combination of these two conditions implies that $\omega_\infty$ is actually K\"ahler Ricci-flat, and by the Hermite-Yang-Mills condition for $\nabla$ we also have that $\cT \cong TX$ (see \cite[Lem. 4.1]{AGF1}). The final step is to perturb a given solution with $\epsilon=0$ to a solution with small $\epsilon > 0$, that is,
with large $\delta$, provided that \eqref{eq:c2BC} is satisfied. This is done via the Implicit Function Theorem in Banach spaces. The perturbation leaves the holomorphic structure of $\cE$ unchanged while the one on $TX$ is shifted by a complex gauge transformation and so remains isomorphic to the initial one.

We give a concrete example where the hypothesis of Theorem \ref{thm:existence0} are satisfied. For further examples of stable bundles on algebraic Calabi-Yau threefolds satisfying \eqref{eq:c1} and \eqref{eq:c2BC} we refer to \cite{AGF1,AGF2,Jardim}.

\begin{example}
For $X$ a generic quintic in $\mathbb{CP}^4$ (see Example \ref{ex:Candelas}), any K\"ahler Ricci-flat metric has holonomy $SU(3)$. Then, by a result of Huybrechts \cite{Huyb}, the bundle $TX \oplus \mathcal{O}_X$ admits stable holomorphic deformations $\cE$, which therefore have the same Chern classes as $TX$. The application of Theorem \ref{thm:existence0} in this example recovers \cite[Th. 5.1]{LiYau}.
\end{example}

We recall next the reduction method of Fu and Yau \cite{FuYau}, based on the non k\"ahlerian fibred threefolds constructed by Goldstein and Prokushkin \cite{GoPro}. This result does not impose the Hermite-Yang-Mills condition on $\nabla$, that is taken to be the Chern connection of the hermitian metric on $X$. Let $(S,\Omega_S)$ be a $K3$ surface with a K\"ahler Ricci-flat metric $g_S$ and K\"ahler form $\omega_S$. Let $\omega_1$ and $\omega_2$ be anti-self-dual $(1,1)$-forms on $S$ such that 
$$
[\omega_i/2\pi] \in H^2(S,\ZZ).
$$
Let $X$ be the total space of the fibred product of the $U(1)$ line bundles determined by $[\omega_1/2\pi]$ and $[\omega_2/2\pi]$. Given a function $u$ on $S$, consider the hermitian form
\begin{equation}\label{eq:FuYau}
\omega_u = p^*(e^u \omega_S) + \frac{i}{2} \theta \wedge \overline{\theta},
\end{equation}
where $\theta$ is a connection on $X$ such that $i F_\theta = \omega_1 + \omega_2$, and the complex threeform
$$
\Omega = \Omega_S \wedge \theta.
$$
Then, using that $\omega_1$ and $\omega_2$ are anti-self-dual, it is easy to check that $\omega_u$ satisfies the dilatino equation \eqref{eq:dilatinosec2} and $d \Omega = 0$. Let $\cE_S$ be a degree zero $[\omega_S]$-stable holomorphic vector bundle over $S$. Define $\cE = p^*\cE_S$ and $h = p^*h_S$, where $h_S$ is the Hermite-Einstein metric on $\cE_S$. Then, $h$ is a Hermite-Einstein metric for $\omega_u$ and hence with this ansatz the Strominger system reduces to the Bianchi identity. This identity is actually equivalent to the following complex Monge-Amp\`ere equation on $S$
$$
dd^c (e^u \omega - \alpha e^{-u}\rho) + \frac{1}{2}dd^c u \wedge dd^c u = \mu \omega_S^2/2,
$$
where $\rho$ is a smooth real $(1,1)$-form on $S$ independent of $u$ and
$$
\mu \omega_S^2 = (|\omega_1|^2 + |\omega_2|^2)\omega_S^2 + \alpha (\tr F_h \wedge F_h - R_{\omega_S} \wedge R_{\omega_S}).
$$
Here, $R_{\omega_S}$ denotes the curvature of the Chern connection of $\omega_S$ on $S$.

\begin{theorem}[\cite{FuYau}]\label{thm:FuYau}
The equation \eqref{eq:FuYau} admits a solution for $\alpha > 0$, provided that
$$
0 = \int_S \mu \omega_S^2 = \int_S (|\omega_1|^2 + |\omega_2|^2)\omega_S^2 - 8\pi^2 \alpha (24 - c_2(\cE_S)).
$$ 
\end{theorem}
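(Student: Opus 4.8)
The plan is to reduce the statement to solving, on the compact K\"ahler surface $(S,\omega_S)$, the scalar complex Monge--Amp\`ere type equation
\[
dd^c(e^u\omega_S - \alpha e^{-u}\rho) + \tfrac{1}{2}\,dd^c u \wedge dd^c u = \tfrac{1}{2}\mu\,\omega_S^2
\]
for an unknown $u\in C^\infty(S)$, since by the preceding discussion such a $u$ produces, via \eqref{eq:FuYau}, a hermitian form $\omega_u$ solving the Strominger system on $X$. First I would record that the stated constraint is necessary: integrating the equation over $S$, the $dd^c$--exact term $dd^c(e^u\omega_S-\alpha e^{-u}\rho)$ vanishes by Stokes' theorem, and so does $\int_S dd^c u\wedge dd^c u=\int_S d(d^c u\wedge dd^c u)$, leaving exactly $\int_S\mu\,\omega_S^2=0$. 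Thus the integrability condition is the only obstruction at the level of total integrals, and the task is to prove it sufficient.

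I would attack existence by the continuity method. Introduce a one--parameter family of equations $E_t$, $t\in[0,1]$, interpolating between a trivially solvable equation $E_0$ and the target equation $E_1$, with the right--hand sides arranged so that the integrability condition persists for every $t$. Let $T\subset[0,1]$ be the set of parameters for which $E_t$ admits a smooth solution lying in the admissible class where $e^u$ dominates the Hessian, so that the fully nonlinear operator is elliptic. Openness of $T$ follows from the implicit function theorem in H\"older spaces, provided one checks that the linearization---a second--order elliptic operator whose leading part is a positive combination of the Laplacian and the linearized Monge--Amp\`ere operator---is invertible between the appropriate spaces; here the $e^{\pm u}$ weights and the preserved integral constraint are used to dispose of the kernel and cokernel.

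The heart of the proof is the uniform a priori estimates giving closedness of $T$, which I would establish in the classical order. For the $C^0$ estimate the maximum principle is not directly available, since the equation is not monotone in $u$; instead I would multiply by suitable powers $e^{ku}$ and integrate, exploiting the favourable sign of the gradient terms generated by the $e^u\omega_S$ and the Hessian terms, and run a Moser iteration to bound $\|u\|_{C^0}$ from both sides after normalizing, say, $\inf_S u=0$. The gradient estimate then follows from a Bochner--type computation combined with the $C^0$ bound. The decisive and most delicate step is the second--order estimate controlling the full complex Hessian $i\partial\dbar u$: one applies the maximum principle to an auxiliary quantity built from the largest Hessian eigenvalue together with exponentials of $u$ and of the gradient, and the non--concave, fully nonlinear structure forces a careful treatment of the third--order commutator terms. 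Crucially, this is where $\alpha>0$ and the positivity of $e^u$ enter to guarantee \emph{uniform} ellipticity along the whole path; without a lower bound keeping the solution inside the admissible cone the Hessian estimate degenerates.

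Once a uniform $C^2$ bound is secured, Evans--Krylov theory (or its complex Monge--Amp\`ere analogue) together with Schauder estimates upgrade it to uniform $C^{2,\beta}$ and then $C^\infty$ bounds, yielding closedness of $T$; connectedness of $[0,1]$ then gives $1\in T$ and hence a solution. The main obstacle, as indicated, is precisely the second--order a priori estimate combined with maintaining uniform ellipticity: the Fu--Yau equation is not of standard complex Monge--Amp\`ere type, as it couples $e^u$, $e^{-u}$ and a quadratic Hessian term, so neither Yau's concavity argument from the Calabi conjecture nor a direct convexity argument applies, and controlling $i\partial\dbar u$ requires structure specific to this equation \cite{FuYau}.
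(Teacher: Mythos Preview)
The paper does not prove this theorem: it is stated as a result of Fu and Yau \cite{FuYau} and no proof is given in these lecture notes, so there is nothing to compare your argument against. Your sketch is a fair outline of the strategy in the original reference---reduction to the scalar Fu--Yau equation on $S$, continuity method, Moser-type integral estimates for $C^0$, and the delicate second-order estimate as the crux---but within the present paper the statement functions purely as a citation.
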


The case $\alpha < 0$ was proved in \cite{FuYau2}, and more recently in \cite{Phongslope} with different methods.

We consider now the method of invariant solutions in homogeneous spaces. Following \cite{FeiYau,OUV}, we describe an explicit solution of the form $X = \SL(2,\CC)/\Gamma$, for $\Gamma$ a cocompact lattice in $\SL(2,\CC)$. The group $\SL(2,\CC)$ is unimodular and therefore it admits a biinvariant holomorphic volume form. Furthermore, any right invariant metric on $\SL(2,\CC)$ is balanced, and solves the dilatino equation (cf. Example \ref{example:balanced2}). With the ansatz $F_A = 0$ for the connection $A$, the Strominger system \eqref{eq:Stromingersystem2} reduces to the conditions
\begin{equation}\label{eq:StromingerSL}
\begin{split}
\Lambda_\omega R_\nabla & = 0, \qquad R_\nabla^{0,2} = 0 \\
dd^c \omega - \alpha \(\tr R_\nabla \wedge R_\nabla\) & = 0. 
\end{split}
\end{equation} 
We want to check that the system is satisfied for $\nabla = \nabla^g - \frac{1}{2}g^{-1}d^c\omega$ the Bismut connection of $\omega$. To see this, consider a right invariant basis $\{\sigma^1,\sigma^2,\sigma^3\}$ of $(1,0)$-forms satisfying
$$
d \sigma^1 = \sigma^2 \wedge \sigma^3, \qquad d \sigma^2 = - \sigma^1 \wedge \sigma^3, \qquad d \sigma^3 = \sigma^1 \wedge \sigma^2.
$$
Consider the biinvariant holomorphic volume form
$$
\Omega = \sigma^1 \wedge \sigma^2 \wedge \sigma^3
$$
and the right invariant hermitian metric
$$
\omega_t = \frac{i}{2}t^2 (\sigma^1 \wedge \overline{\sigma}^1 + \sigma^2 \wedge \overline{\sigma}^2 + \sigma^3 \wedge \overline{\sigma}^3)
$$
for $t \in \RR \backslash \{0\}$. Define a real basis of right invariant $1$-forms by
$$
e^1 + i e^2 = t \sigma^1, \qquad e^3 + i e^4 = t \sigma^2, \qquad e^5 + i e^6 = t \sigma^3.
$$
Then, a direct calculation shows that (see \cite[Th. 4.3]{OUV} and \cite[eq. (8)]{FeiYau}
$$
dd^c \omega_t = -\frac{4}{t^2}(e^{1234} + e^{1256} + e^{3456}),
$$
and for $\nabla$ the Bismut connection
$$
\alpha \tr R_\nabla \wedge R_\nabla = - \alpha \frac{16}{t^4}(e^{1234} + e^{1256} + e^{3456}).
$$
Therefore, for $\alpha > 0$ taking $t$ such that $\alpha = t^2/4$ we obtain a solution of the Bianchi identity. Furthermore, by \cite[Prop. 4.1]{OUV} the Bismut connection is also a solution of the Hermite-Yang-Mills equations.

Although the three methods we have just explained provide a large class of solutions of the Strominger system in complex dimension $3$, the existence problem is widely open. The following conjecture by Yau is one of the main open problems in this topic.

\begin{conjecture}[Yau \cite{Yau2010}]\label{conj:Yau}
Let $(X,\Omega)$ be a compact Calabi-Yau threefold endowed with a balanced class $\tau$. Let $\cE$ be a holomorphic vector bundle over $X$ satisfying \eqref{eq:c1} and \eqref{eq:c2BC}. If $\cE$ is stable with respect to $\tau$, then $(X,\Omega,\cE)$ admits a solution of the Strominger system.
\end{conjecture}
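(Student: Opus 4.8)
The plan is to attack Conjecture \ref{conj:Yau} by a continuity method that decouples the gauge-theoretic part from the metric part and concentrates the analytic difficulty in the Bianchi identity \eqref{eq:bianchi}. The first step is to exploit Theorem \ref{th:LiYau}: since $\cE$ is $\tau$-stable and $c_1(X)=0$, every balanced metric $\omega$ with balanced class $\tau$ determines, by the Li--Yau correspondence, a Hermite--Yang--Mills connection $A(\omega)$ on $\cE$, unique up to unitary gauge; and, provided the holomorphic tangent bundle $\cT=(TM,J,\dbar_\nabla)$ is $\tau$-polystable (which holds in the K\"ahler limit and is preserved under small deformation, as in the proof of Theorem \ref{thm:existence0}), a connection $\nabla(\omega)$ on $TX$. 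In this way the first two (first-order) equations of \eqref{eq:Stromingersystem2} are solved identically, and the unknown is reduced to the balanced metric $\omega$ itself, which by Remark \ref{rem:balanceddef} we parametrize inside the class $\tau$ by $\omega^2\mapsto\omega^2+dd^c\varphi$ for a real $(1,1)$-form potential $\varphi$. The conformally balanced equation then becomes automatic, and the entire system collapses to the single fully nonlinear equation obtained by substituting $A(\omega),\nabla(\omega)$ into the Bianchi identity.

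Second, I would set up the continuity path by scaling the balanced class, writing $\omega\to\delta\omega$ and $\epsilon:=\alpha/\delta$ as in the discussion preceding Theorem \ref{thm:existence0}, so that the Bianchi identity reads $dd^c\omega=\epsilon(\tr R_\nabla\wedge R_\nabla-\tr F_A\wedge F_A)$ and one runs $\epsilon$ from $0$ up to the given value $\alpha$. Here the genuine difficulty already surfaces: at $\epsilon=0$ the equation is $dd^c\omega=0$, a pluriclosed-type constraint that a general balanced class need \emph{not} admit. On a k\"ahlerian $X$ this is precisely the starting point of Theorem \ref{thm:existence0}, and the openness step is the Implicit Function Theorem; in the non-k\"ahlerian case I would instead build an approximate solution for small $\epsilon$ from a fibration or gluing ansatz modelled on the Fu--Yau construction of Theorem \ref{thm:FuYau}, and then correct it. Checking that the linearization of the reduced equation is an isomorphism transverse to the gauge and automorphism directions would supply openness along the path.

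Third---and this is where the bulk of the work and the principal obstacle lie---one must establish a priori estimates for the reduced Bianchi equation, uniform in $\epsilon$, so that openness combines with closedness to reach $\epsilon=\alpha$. Written out, the equation is a complex Hessian-type equation for $\varphi$ of the same flavour as the Fu--Yau equation of Theorem \ref{thm:FuYau}, but lacking the symmetry that makes that case tractable; moreover the dependence of $F_A$ and $R_\nabla$ on $\omega$ through the Hermite--Yang--Mills condition renders it a \emph{non-local} fully nonlinear equation. The hard part will be the second-order ($C^2$) estimate: the Pontryagin term $\tr R_\nabla\wedge R_\nabla-\tr F_A\wedge F_A$ carries no fixed sign, so the equation is not concave and Evans--Krylov theory does not apply directly, while the zeroth-order ($C^0$) estimate is obstructed by the absence of a maximum principle adapted to the non-K\"ahler torsion $-d^c\omega$. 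I expect that controlling these estimates---equivalently, preserving the positivity of $\omega$ along the path and preventing the curvature terms from destroying ellipticity---is exactly the analytic content that keeps Conjecture \ref{conj:Yau} open, and any proof will have to provide either a new energy estimate for the coupled system or a structural reason why the Pontryagin term stays subdominant within the balanced class $\tau$.
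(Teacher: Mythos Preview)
The statement you are addressing is a \emph{conjecture}, not a theorem, and the paper offers no proof: immediately after stating it the author writes that ``even for k\"ahlerian manifolds, Conjecture \ref{conj:Yau} is not completely understood'' and that ``for non-k\"ahlerian manifolds, Yau's Conjecture is widely open.'' The paper then isolates Question \ref{question} as a basic analytic sub-problem that would have to be settled first, and points to the geometric-flow approach of \cite{Phong} as a promising line of attack. There is therefore nothing to compare your proposal against; what you have written is a research strategy, not a proof, and you yourself say as much in your final paragraph.

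As a strategy your outline is reasonable in spirit---reduce to a single equation for the metric by feeding the Li--Yau correspondence back into the Bianchi identity, then run a continuity path in the rescaled coupling $\epsilon=\alpha/\delta$---but several steps are not as automatic as you suggest. First, the class $\tau$ in the conjecture is the balanced class of $\|\Omega\|_\omega\omega^{2}$, not of $\omega^{2}$; parametrizing by $\omega^{2}\mapsto\omega^{2}+dd^c\varphi$ does not make the conformally balanced equation ``automatic'', and you must instead parametrize $\|\Omega\|_\omega\omega^{2}$ inside $\tau$ and recover $\omega$ implicitly (this is genuinely nonlinear in $\varphi$). Second, your appeal to $\tau$-polystability of $\cT$ ``as in the proof of Theorem \ref{thm:existence0}'' presupposes a K\"ahler Ricci-flat reference metric with holonomy $\SU(3)$; on a non-k\"ahlerian $X$ there is no such starting point, and you give no mechanism producing the required $\nabla$. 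Third, and most seriously, the map $\omega\mapsto(A(\omega),\nabla(\omega))$ given by Theorem \ref{th:LiYau} is not known to depend smoothly (or even continuously) on $\omega$ in any useful topology once you leave a small neighbourhood, so the ``reduced Bianchi equation'' you want to run continuity on is not obviously a well-posed PDE for $\varphi$; the non-locality you flag in your third paragraph is not a nuisance to be estimated away but an obstruction to even writing down the linearization you need for openness. These are precisely the reasons the problem remains open, and your proposal, while a sensible roadmap, does not close any of these gaps.
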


Even for k\"ahlerian manifolds, Conjecture \ref{conj:Yau} is not completely understood. In this setup, Theorem \ref{thm:existence0} provides a solution of Conjecture \ref{conj:Yau} for balanced classes of the form $\tau = [\omega]^2$, where $[\omega]$ is a K\"ahler class on $X$. We note however that Fu and Xiao \cite{FuXiao} have proved that for projective Calabi-Yau $n$-folds the cohomology classes
$$
[\beta] \in H^{1,1}(X,\RR)
$$   
such that $[\beta]^n > 0$ -- known as \emph{big classes} -- satisfy that $[\beta]^{n-1}$ is a balanced class. An interesting example of a big class which is not K\"ahler is provided by Example \ref{ex:Candelas} on conifold transitions. With the notation stated there, if $L$ is the pull-back of any ample divisor on $X_0$, then $c_1(L)$ is a big (and nef) class on $X$. By a result of Tosatti \cite{Tosatti}, the smooth Ricci-flat metrics on $X$ with classes approaching $c_1(L)$ have a well-defined limit, given by the pull-back of the unique singular Ricci-flat metric on $X_0$. It is plausible that this result combined with Theorem \ref{thm:existence0} can be used to prove Conjecture \ref{conj:Yau} for algebraic Calabi-Yau threefolds. We should also note that the method of Theorem \ref{thm:existence0} does not have any control on the balanced class of the final solution. On general grounds, it is expected that the solution predicted by Conjecture \ref{conj:Yau} has balanced class $\tau$.

For non-k\"ahlerian manifolds, Yau's Conjecture is widely open. To illustrate this, we state a basic question that should be addressed before dealing with the more general Conjecture \ref{conj:Yau}.

\begin{question}\label{question}
Let $X$ be a compact complex manifold with balanced class $\tau \in H^{n-1,n-1}_{BC}(X,\RR)$. Let $\rho \in \Omega^{n-1,n-1}$ be a real $dd^c$-exact form on $X$. Is there a balanced metric $g$ on $X$ with balanced class $\tau$ solving the following equation?
\begin{equation}\label{eq:Bianchiputative}
dd^c \omega = \rho.
\end{equation}
\end{question}

A promising approach to Conjecture \ref{conj:Yau} using geometric flows -- which in particular treats a question closely related to Question \ref{question} -- has been recently proposed in \cite{Phong}.

Setting Question \ref{question} in the affirmative in the case of Clemens-Friedman non-k\"ahlerian complex manifolds would provide important support for Yau's proposal of a metric approach to Reid's Fantasy (see Section \ref{sec:conifoldtr}). Equation \eqref{eq:Bianchiputative} (as well as the Strominger system) pins down a particular solution of the dilatino equation \eqref{eq:dilatinosec2} in a given balanced class, via a condition on the torsion $-d^c\omega$ of the Bismut connection. As we have pointed out earlier in this section, when $\rho = 0$ the combination of \eqref{eq:Bianchiputative} with the dilatino equation \eqref{eq:dilatinosec2} is equivalent to the metric being Calabi-Yau. The mechanism whereby a conifold transition creates a $4$-form $\rho$ which couples to the metric is still unknown (in physics, $\rho$ can be interpreted as the Poincar\'e dual four-current of a holomorphic submanifold wrapped by a NS$5$-brane \cite{TsengYau}).

\section{Physical origins and string classes}\label{sec:physics}

\subsection{The Strominger system in heterotic supergravity}\label{sec:sugra}

The Strominger system arises in the low-energy limit of the heterotic string theory. 
This theory is described by a $\sigma$-model, a quantum field theory with fields given by smooth maps $C^\infty(S,N)$, from a smooth surface $S$ -- the worldsheet of the string -- into a target manifold $N$. From the point of view of the worldsheet, the theory leads to a superconformal field theory. In the low-energy limit, the heterotic string can be described from the point of view of $N$, yielding a supergravity theory. We start with a discussion of classical heterotic supergravity, which allows a rigorous derivation of the Strominger system (and completely omits perturbation theory). We postpone a conceptual explanation of the Bianchi identity 
to the next section, where we adopt the worldsheet approach.

Heterotic supergravity is a ten-dimensional supergravity theory coupled with super Yang-Mills theory (see e.g. \cite[p. 1101]{QFS}). It is formulated on a $10$-dimensional spin manifold $N$, i.e. oriented, with vanishing
second Stiefel-Whitney class
$$
w_2(N) = 0,
$$
and with a choice of element in $H^1(N,\mathbb Z_2)$. The manifold is endowed with a principal bundle $P_K$, with compact structure group $K$, contained in $SO(32)$ or $E_8 \times E_8$. We will assume $K = SU(r)$.

The (bosonic) field content of the theory is given by a metric $g_0$ of signature $(1,9)$ (in the string frame), a (dilaton) function $\phi \in C^\infty(N)$, a $3$-form $H \in \Omega^3$ and a (gauge) connection $A$ on $P_K$. We ignore the fermionic fields in our discussion. The equations of motion can be written as
\begin{equation}\label{eq:motionHetsugra}
  \begin{split}
    \operatorname{Ric}^{g_0} - 2 \nabla^{g_0}(d\phi) - \frac{1}{4} H \circ H + \alpha \tr F \circ F - \alpha \tr R \circ R & = 0,\\
    d^*(e^{2\phi}H) & = 0,\\
    d^*_A(e^{2\phi}F) + \frac{e^{2\phi}}{2} *(F \wedge *H) & = 0,\\
    S^{g_0} - 4 \Delta \phi - 4 |d\phi|^2 - \frac{1}{2}|H|^2 + \alpha(|R|^2
- |F|^2) & = 0,
  \end{split}
\end{equation}
where $\alpha$ as positive real constant -- the \emph{slope string parameter} --, $F$ is the curvature of $A$,  and $R$ is the curvature of an
auxiliary connection $\nabla_0$ on $TN$. Here, $H \circ H$ is a symmetric $2$-tensor constructed by contraction with the metric (and similarly for $\tr F \circ F - \tr R \circ R$)
$$
(H \circ H)_{mn} = g_0^{ij}g_0^{kl}H_{ikm} H_{jln}.
$$

The introduction of the connection $\nabla_0$ -- which is not considered as a physical field -- is due to the cancellation of anomalies (this is the failure of a classical symmetry to be a symmetry of the quantum theory). The Green-Schwarz mechanism of anomaly cancellation \cite{GreenSchwarz} sets a particular local ansatz for the three-form
\begin{equation}\label{eq:Hsugra}
  H = db - \alpha(CS(\nabla_0) - CS(A)),
\end{equation}
in terms of a $2$-form $b \in \Omega^2$ and the Chern-Simons $3$-forms of $A$ and $\nabla_0$. We use the convention $d CS(A) = -\tr F_A \wedge F_A$. Up to the Chern-Simons term, $H$ can be regarded as the \emph{field strength} of the locally-defined \emph{$B$-field} $b$. Although \eqref{eq:Hsugra} fails to be globally well-defined on $N$, it imposes the global \emph{Bianchi identity} constraint
\begin{equation}\label{eq:dHsugra}
  dH = \alpha(\tr R \wedge R - \tr F \wedge F).
\end{equation}
We postpone the conceptual explanation of this equation to the worldsheet approach.

We note that the equations \eqref{eq:motionHetsugra} do not arise as critical points of any functional, e.g. due to the term $*(F \wedge *H)$ in the third equation. Rather, physicists consider the \emph{pseudo-action} 
$$
\int_N e^{-2\phi}(S^{g_0} + 4|d\phi|^2 - \frac{1}{12}|H|^2 +
\frac{\alpha'}{2}(|R|^2- |F|^2))\operatorname{Vol}_{g_0},
$$
where the norm squared of $F$ and $R$ is taken with respect to the
Killing form $-\tr$. Remarkably, calculating the critical points of this functional and taking the local form \eqref{eq:Hsugra} of $H$ into account, yields the equations of motion \eqref{eq:motionHetsugra}.

In supergravity theories, supersymmetry distinguishes special solutions of the equations of motion which are fixed by the action of a (super) Lie algebra on the space of fields. Generators for this action are typically given in terms of spinors. In the case of heterotic supergravity, ($N=1$) supersymmetry requires the existence of a non-vanishing Majorana-Weyl spinor $\epsilon$ with positive chirality (see \cite[p. 9]{Figueroa}), satisfying
the \emph{Killing spinor equations}
\begin{equation}\label{eq:susy10d}
  \begin{split}
    F \cdot \epsilon & = 0\\
    \nabla^- \epsilon & = 0,\\
    (H + 2d\phi) \cdot \epsilon & = 0,
  \end{split}
\end{equation}
Here $\nabla^-$ is the metric connection with skew torsion $-H$
obtained from the Levi-Civita connection $\nabla^{g_0}$
\begin{equation}\label{eq:nablaminus}
\nabla^- = \nabla^{g_0} - \frac{1}{2}g_0^{-1}H.
\end{equation}

The relation between the heterotic supergravity equations and the Strominger system arises via a mechanism called \emph{compactification}, whereby the $10$-dimensional theory is related to a theory in $4$-dimensions. Strominger \cite{Strom} and Hull \cite{HullTurin} characterized the geometry of a very general class of compactifications of heterotic supergravity, inducing a $4$-dimensional supergravity theory with $N=1$ supersymmetry. The geometric conditions that they found in the so called \emph{internal space} is what is known today as the Strominger system. Mathematically, Strominger-Hull compactifications amount to the following ansatz: the space-time manifold is a product
$$
N = \RR^4 \times M,
$$
where $M$ is a compact smooth oriented spin $6$-dimensional manifold -- the internal space --, with metric given by
$$
g_0 = e^{2(f - \phi)}(g_{1,3} \times g)
$$ 
for $g_{1,3}$ a flat Lorentz metric, $g$ a riemannian metric on $M$ and $f \in C^\infty(M)$ a smooth function on $M$. The fields $H$ and $A$ are pull-back from $M$, and $\nabla_0$ is the product of the Levi-Civita connection of $g^{1,3}$ with a connection $\nabla$ on $TM$ compatible with $g$. The condition of $N =1$ supersymmetry in $4$-dimensions imposes that $f = \phi$ and also
$$
\epsilon = \zeta \otimes \eta + \zeta^* \otimes \eta^*,
$$ 
where $\zeta$ is a positive chirality spinor for $g_{1,3}$ and $\eta$ is a positive chirality spinor for $g$ (living in complex representations of the corresponding real $\operatorname{Spin}$ group), while $\zeta^*$ and $\eta^*$ denote their respective conjugates. With this ansatz, the equations \eqref{eq:motionHetsugra}, \eqref{eq:susy10d} and \eqref{eq:dHsugra} are equivalent, respectively, to equations for $(g,\phi,H,\nabla,A,\eta)$ on $M$:
\begin{equation}\label{eq:motion6d}
  \begin{split}
    \operatorname{Ric}^g - 2 \nabla^g(d\phi) - \frac{1}{4} H \circ H - \alpha F_A \circ F_A + \alpha R_\nabla \circ R_\nabla & = 0,\\
    d^*(e^{2\phi}H) & = 0,\\
    d^*_A(e^{2\phi}F_A) + \frac{e^{2\phi}}{2} *(F_A \wedge *H)& = 0,\\
S^{g} - 4 \Delta \phi - 4 |d\phi|^2 - \frac{1}{2}|H|^2 + \alpha(|R_\nabla|^2
- |F_A|^2) & = 0
  \end{split}
\end{equation}

\begin{equation}\label{eq:susy6d}
  \begin{split}
    \nabla^- \eta & = 0,\\
    (d\phi + \frac{1}{2}H) \cdot \eta & = 0,\\
    F_A \cdot \eta & = 0,\\
  \end{split}
\end{equation}

\begin{equation}\label{eq:dH6d}
    dH - \alpha(\tr R_\nabla \wedge R_\nabla - \tr F_A \wedge F_A) = 0.
\end{equation}

The following characterization of \eqref{eq:susy6d} and \eqref{eq:dH6d} in terms of $SU(3)$-structures is due to Strominger and Hull.

\begin{theorem}[\cite{HullTurin,Strom}]\label{th:strom}
A solution $(g,\phi,H,\nabla,A,\eta)$ of \eqref{eq:susy6d} and \eqref{eq:dH6d} is equivalent to a Calabi-Yau structure $(X,\Omega)$ on $M$, with hermitian metric $g$ and connection $A$ on $P_K$ solving
\begin{equation}\label{eq:Stromingersystemoriginal}
\begin{split}
\Lambda_\omega F_A & = 0, \qquad F_A^{0,2} = 0, \\
d^*\omega - d^c \log \|\Omega\|_\omega & = 0,\\
dd^c \omega - \alpha \(\tr R_\nabla \wedge R_\nabla - \tr F_A \wedge F_A\) & = 0. 
\end{split}
\end{equation}
where
$$
H = d^c \omega, \qquad d \phi = - \frac{1}{2} d \log \|\Omega\|_\omega.
$$
\end{theorem}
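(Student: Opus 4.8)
The plan is to set up the standard dictionary between spinors and $\SU(3)$-structures and to translate each of the three Killing spinor equations in \eqref{eq:susy6d}, together with the Bianchi identity \eqref{eq:dH6d}, into the corresponding line of \eqref{eq:Stromingersystemoriginal}. The starting point is that on the $6$-manifold $M$ the spin representation of $\Spin(6) \cong \SU(4)$ is the fundamental $\CC^4$, so a nowhere-vanishing spinor $\eta$ --- which we normalise to unit length --- has stabiliser $\SU(3)$ and hence defines a reduction of the frame bundle to $\SU(3)$. Concretely this is an almost complex structure $J$, a compatible hermitian form $\omega$, and a unit $(3,0)$-form $\psi$, all built as bilinears in $\eta$. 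The key algebraic input is the behaviour of Clifford multiplication by $p$-forms on the pure spinor $\eta$ relative to this decomposition: a $1$-form acts injectively, a $2$-form annihilates $\eta$ precisely when it is primitive of type $(1,1)$, and the trace and $(3,0)+(0,3)$ parts of a $3$-form act non-trivially while its primitive $(2,1)+(1,2)$ part is constrained. I would first establish these three Clifford-algebra facts, as they are what convert linear spinor equations into type and primitivity conditions.

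With this dictionary in hand, I would treat the gravitino equation $\nabla^-\eta=0$ first, as it carries most of the geometric content. Since $\nabla^-$ is a metric connection with skew torsion $-H$ (see \eqref{eq:nablaminus}) and annihilates $\eta$, it preserves the whole $\SU(3)$-structure; in particular $\nabla^- J = 0$. By the uniqueness of the hermitian connection with skew-symmetric torsion recalled in \eqref{eq:Bismut}, this forces $J$ to be integrable and identifies $\nabla^-$ with the Bismut connection $\nabla^B$, whence $H = d^c\omega$. Moreover the parallel unit form $\psi$ reduces $hol(\nabla^B)$ to $\SU(3)$ as in \eqref{eq:hol}, equivalently the Bismut connection is flat on $K_X$; on the compact manifold $X=(M,J)$ this trivialises $K_X$ holomorphically and produces the holomorphic volume form $\Omega$, giving the Calabi-Yau structure, and by Proposition \ref{prop:dilatino} it is equivalent to the dilatino equation $d^*\omega = d^c\log\|\Omega\|_\omega$, the third line of \eqref{eq:Stromingersystemoriginal}.

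The remaining two Killing spinor equations are handled by the same Clifford dictionary. The gaugino equation $F_A\cdot\eta = 0$ holds exactly when $F_A$ is a primitive $(1,1)$-form, that is $F_A^{0,2}=0$ and $\Lambda_\omega F_A = 0$; this is the Hermite-Yang-Mills condition in the first line of \eqref{eq:Stromingersystemoriginal}. For the dilatino equation $(d\phi + \tfrac12 H)\cdot\eta = 0$ I would substitute $H=d^c\omega$ and match the $1$-form component of the Clifford action: the trace part of $d^c\omega$ acting on $\eta$ reproduces the Lee form, which by the dilatino equation already established equals $-\,d\log\|\Omega\|_\omega$, so the equation fixes the dilaton via $d\phi = -\tfrac12 d\log\|\Omega\|_\omega$. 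Finally, substituting $H=d^c\omega$ into \eqref{eq:dH6d} gives $dd^c\omega = \alpha(\tr R_\nabla\wedge R_\nabla - \tr F_A\wedge F_A)$, the last line of \eqref{eq:Stromingersystemoriginal}. The converse runs the construction backwards: starting from a solution of \eqref{eq:Stromingersystemoriginal}, Proposition \ref{prop:dilatino} makes $\|\Omega\|_\omega^{-1}\Omega$ parallel for $\nabla^B$, so its associated unit spinor $\eta$ satisfies the gravitino equation with $H=d^c\omega$, while the gaugino and dilatino equations follow from the same correspondence with $\phi = -\tfrac12\log\|\Omega\|_\omega$.

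I expect the main obstacle to be the spinorial linear algebra of the first paragraph rather than any analysis: one must pin down, with consistent conventions, exactly how Clifford multiplication by $1$-, $2$- and $3$-forms acts on the pure spinor $\eta$, since the entire translation --- primitivity of $F_A$, the identification $H=d^c\omega$, and the determination of $\phi$ --- rests on these identities. A secondary technical point is the passage from $hol(\nabla^B)\subset\SU(3)$ to the existence of a genuinely holomorphic $\Omega$, where compactness of $X$ is used; this is precisely the place where Proposition \ref{prop:dilatino} enters and closes the argument.
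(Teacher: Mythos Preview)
Your overall strategy matches the paper's: reduce to an $\SU(3)$-structure via the stabiliser of $\eta$ in $\Spin(6)$, and translate each Killing spinor equation into the corresponding piece of \eqref{eq:Stromingersystemoriginal}. The handling of the gaugino equation, the Bianchi identity, and the role of Proposition~\ref{prop:dilatino} are all in line with the paper's sketch.

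There is, however, a genuine gap in your argument for the integrability of $J$. You write that $\nabla^- J = 0$ together with $\nabla^-$ having skew torsion, combined with ``the uniqueness of the hermitian connection with skew-symmetric torsion recalled in \eqref{eq:Bismut}'', forces $J$ to be integrable. This is not correct: the uniqueness statement around \eqref{eq:Bismut} is formulated for a \emph{complex} manifold $X$ and says nothing about almost-hermitian structures. In the almost-hermitian setting, hermitian connections with totally skew torsion can and do exist without $J$ being integrable --- nearly K\"ahler $6$-manifolds are the standard example --- so the existence of such a connection is not by itself a criterion for integrability.

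The paper handles exactly this point differently. Instead of invoking uniqueness, it quotes Gauduchon's formula \cite[Eq.~(2.5.2)]{Gau1}, valid in the almost-hermitian setting: since $\nabla^-$ is unitary with skew torsion $-H$, one has
\[
H \;=\; -\,N \;+\; (d^c\omega)^{2,1+1,2},
\]
where $N$ is (the $3$-form associated to) the Nijenhuis tensor of $J$. This makes explicit that the $(3,0)+(0,3)$ part of $H$ is governed by $N$, so that integrability is equivalent to a type condition on $H$. The vanishing of $N$ --- and with it the identification $H = d^c\omega$ --- then requires a further step, for which the paper refers to \cite[Th.~6.10]{GRT}. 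In your write-up you should isolate this step: either show directly, via your Clifford dictionary for $3$-forms acting on the pure spinor, that the combined Killing spinor equations force the $(3,0)+(0,3)$ component of $H$ to vanish, or cite Gauduchon's formula and explain what kills $N$. Without this, the passage from $\nabla^-\eta = 0$ to ``$J$ is integrable and $H = d^c\omega$'' is unjustified.
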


For the proof, one can use that the stabilizer of $\eta$ in $\operatorname{Spin}(6)$ is $\SU(3)$, and write the equations in terms of the corresponding $\SU(3)$-structure. The key point is that, since $\nabla^-$ is unitary and has totally skew-torsion $-H$, by \cite[Eq. (2.5.2)]{Gau1},
$$
H = -N + (d^c\om)^{2,1+1,2},
$$
where $N$ denotes the Nijenhuis tensor of the almost complex structure determined by $\eta$. We refer to \cite[Th. 6.10]{GRT} for a detailed proof of this result. We note that the same result holds on an arbitrary even-dimensional manifold $M$, provided that the spinor $\eta$ is pure (see \cite[Lem. 9.15]{LaMi} and \cite[Rem. 9.12]{LaMi}).

Supersymmetric vacuum of heterotic supergravity compactified on $M$ -- with the Strominger-Hull ansatz -- correspond to solutions of the system of equations formed by \eqref{eq:motion6d}, \eqref{eq:susy6d} and \eqref{eq:dH6d}. Therefore, finding a solution of \eqref{eq:Stromingersystemoriginal} is a priori not enough to find a supersymmetric classical solution of the theory. 
This problem was understood by Fernandez, Ivanov, Ugarte and Villacampa \cite{FIVU,Ivan09}, who provided a characterization of the solutions of the Killing spinor equations \eqref{eq:susy6d} and the Bianchi identity \eqref{eq:dH6d} which also solve the equations of motion \eqref{eq:motion6d}.

\begin{theorem}[\cite{FIVU,Ivan09}]\label{th:FIVU}
A solution of the Killing spinor equations \eqref{eq:susy6d} and the Bianchi identity \eqref{eq:dH6d} is a solution of the equations of motion \eqref{eq:motion6d} if and only if
  \begin{equation}\label{eq:Rinst1}
    R_\nabla \cdot \epsilon = 0.
  \end{equation}
\end{theorem}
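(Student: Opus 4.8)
The plan is to convert the spinorial Killing spinor equations into tensorial statements about the underlying $\SU(3)$-structure and then to test each of the four equations of motion in \eqref{eq:motion6d} in turn. By Theorem \ref{th:strom} the gravitino and dilatino equations in \eqref{eq:susy6d} are equivalent to $(X,\Omega)$ being Calabi-Yau with $g$ conformally balanced, $H = d^c\omega$ and $d\phi = -\frac12 d\log\|\Omega\|_\omega$; moreover the stabiliser of the pure spinor $\eta$ (which is what $\epsilon$ denotes on the internal space $M$) is $\SU(3)$, so Clifford multiplication by a $2$-form annihilates $\eta$ precisely when that form is primitive of type $(1,1)$. In particular the gaugino equation $F_A\cdot\eta = 0$ is equivalent to $\Lambda_\omega F_A = 0$, $F_A^{0,2} = 0$, and the same dictionary shows that the extra condition $R_\nabla\cdot\epsilon = 0$ is equivalent to $\Lambda_\omega R_\nabla = 0$, $R_\nabla^{0,2} = 0$, that is, to $\nabla$ being Hermite-Yang-Mills. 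Thus the theorem identifies the instanton condition on $\nabla$ --- the first two lines of \eqref{eq:Stromingersystem2}, so often dropped in the literature --- as exactly what upgrades a supersymmetric solution of the Bianchi identity to a genuine critical point.

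The main tool is the integrability of the gravitino equation: since $\nabla^-\eta = 0$, commuting covariant derivatives gives $R^-_{mn}\cdot\eta = 0$, where $R^-$ is the curvature of the connection $\nabla^-$ with skew torsion $-H$ from \eqref{eq:nablaminus}. Feeding this into the standard Bismut--Gauduchon identities, which express $\operatorname{Ric}^g$, the codifferential $d^*H$, and the scalar curvature $S^g$ in terms of $\operatorname{Ric}^-$, the quadratic torsion term $H\circ H$ and $\nabla^g(d\phi)$, and using the dilatino equation to trade $\nabla^g(d\phi)$ for torsion data, I would first dispatch the equations that do not see $\nabla$ directly. The three-form equation $d^*(e^{2\phi}H) = 0$ follows from the antisymmetric part of the $\nabla^-$-Ricci together with $d\phi = -\frac12 d\log\|\Omega\|_\omega$, hence from the Killing spinor equations alone. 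The gauge-field equation reduces, once $F_A\cdot\eta = 0$ is known, to the torsion-corrected Yang--Mills identity $d^*_A(e^{2\phi}F_A) + \frac{e^{2\phi}}2 *(F_A\wedge *H) = 0$ satisfied by any $\SU(3)$-instanton on a conformally balanced manifold with $H = d^c\omega$, which is a routine computation using $\Lambda_\omega F_A = 0$ and $d(\|\Omega\|_\omega\omega^{n-1}) = 0$. Finally, the scalar (dilaton) equation is not independent: it will drop out as a combination of the trace of the Einstein equation with the three-form equation.

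This leaves the Einstein equation as the crux, and it is here that the instanton condition on $\nabla$ enters. I would substitute the anomaly relation $dH = \alpha(\operatorname{tr}R_\nabla\wedge R_\nabla - \operatorname{tr}F_A\wedge F_A)$ from \eqref{eq:dH6d} into the integrability expression for $\operatorname{Ric}^g$, so that the quadratic curvature terms $-\alpha F_A\circ F_A$ and $+\alpha R_\nabla\circ R_\nabla$ of \eqref{eq:motion6d} must be matched against the contributions of $\operatorname{tr}F_A\wedge F_A$ and $\operatorname{tr}R_\nabla\wedge R_\nabla$ arriving through $dH$. The decisive input is a purely algebraic identity: for a $2$-form $F$ with $F\cdot\eta = 0$ the symmetric endomorphism $F\circ F$ is completely determined by $F\wedge F$ through the $\SU(3)$-structure, so the $F_A$-contributions cancel identically because $F_A$ is already an instanton by the gaugino equation. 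The $R_\nabla$-contributions cancel in exactly the same way if and only if $R_\nabla$ satisfies the same constraint $R_\nabla\cdot\eta = 0$; if $\nabla$ fails to be an instanton, the residual symmetric tensor built from the non-$(1,1)$ and non-primitive parts of $R_\nabla$ cannot be absorbed by any other term and forces the Einstein equation to fail. This yields both directions of the equivalence.

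The hard part is precisely this last algebraic identity and the bookkeeping it requires: one must disentangle the two distinct roles of $\nabla$ --- as a source in the Bianchi identity through $\operatorname{tr}R_\nabla\wedge R_\nabla$, and as a quadratic term in the equations of motion through $R_\nabla\circ R_\nabla$ and $|R_\nabla|^2$ --- and verify that the single instanton condition reconciles them, all while tracking the conformal weights $e^{2\phi}$ and the factor $\|\Omega\|_\omega$ relating $H$, $d\phi$ and $\omega$. Because $\alpha$ is fixed and non-vanishing, the $R_\nabla$ terms cannot be treated perturbatively, so the cancellation must be exact; making it rigorous is the technical heart of \cite{FIVU,Ivan09}.
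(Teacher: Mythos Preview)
The paper does not prove this theorem: it is quoted from \cite{FIVU,Ivan09} and stated without proof, so there is no ``paper's own proof'' to compare your proposal against. Your outline is therefore not competing with anything in the text; it is a sketch of what the cited references do.

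As a sketch, your plan is broadly in line with the strategy of \cite{Ivan09}: use the integrability $R^-\cdot\eta = 0$ of the gravitino equation to control the Ricci and scalar curvature of $\nabla^-$, observe that the three-form and gauge equations follow from the Killing spinor equations and the conformally balanced condition without any hypothesis on $\nabla$, and then isolate the Einstein equation as the place where the instanton condition on $\nabla$ is needed because the Bianchi identity feeds $\tr R_\nabla\wedge R_\nabla$ into $dH$. One caution: the ``purely algebraic identity'' you invoke --- that for an instanton $F$ the symmetric tensor $F\circ F$ is determined by $F\wedge F$ via the $\SU(3)$-structure --- is the right kind of statement, but in \cite{Ivan09} the argument is organised somewhat differently, through a Lichnerowicz-type formula for the Dirac operator of $\nabla^-$ and explicit curvature identities for connections with skew torsion, rather than through a direct $\SU(3)$-representation-theoretic matching. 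Your version would work, but the bookkeeping you flag at the end (tracking the $e^{2\phi}$ weights, the two roles of $\nabla$, and the exact coefficients in the Bismut--Gauduchon identities) is genuinely where the proof lives, and your proposal does not carry it out; it correctly defers that to the references.
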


The \emph{instanton condition} \eqref{eq:Rinst1} is equivalent to $\nabla$ being a Hermite-Yang-Mills connection, that is,
\begin{equation}\label{eq:Rinst2}
  R_\nabla^{0,2}=0, \ \ \ R_\nabla \wedge \omega^2=0.
\end{equation}
Combined with Theorem \ref{th:strom}, Theorem \ref{th:FIVU} establishes the link between the equations in heterotic supergravity and the Strominger system \eqref{eq:Stromingersystem2}. To state a precise result, which summarizes the previous discussion, we observe that it follows from the proof of Theorem \ref{th:FIVU} that the instanton condition $R_\nabla \cdot \epsilon = 0$ jointly with \eqref{eq:susy6d} and \eqref{eq:dH6d} implies the following \emph{equation of motion for $\nabla$}
$$
d^*_\nabla(e^{2\phi}R_\nabla) + \frac{e^{2\phi}}{2} *(R_\nabla \wedge *H) = 0.
$$

\begin{theorem}\label{th:strom2}
A solution $(g,\phi,H,\nabla,A,\eta)$ of the system
\begin{equation}\label{eq:heterotic6d}
  \begin{split}
    \nabla^- \eta & = 0,\\
    (d\phi + \frac{1}{2}H) \cdot \eta & = 0,\\
    F_A \cdot \eta & = 0,\\
    R_\nabla \cdot \eta & = 0,\\
    dH - \alpha(\tr R_\nabla \wedge R_\nabla - \tr F_A \wedge F_A) & = 0.
\end{split}
\end{equation}
is equivalent to a Calabi-Yau structure $(X,\Omega)$ on $M$, with hermitian metric $g$ and connection $A$ on $P_K$ solving the Strominger system \eqref{eq:Stromingersystem2}, where
$$
H = d^c \omega, \qquad d \phi = - \frac{1}{2} d \log \|\Omega\|_\omega.
$$
Furthermore, any solution of \eqref{eq:heterotic6d} solves the equations of motion
\begin{equation}\label{eq:motion6d-2}
  \begin{split}
    \operatorname{Ric}^g - 2 \nabla^g(d\phi) - \frac{1}{4} H \circ H - \alpha F_A \circ F_A + \alpha R_\nabla \circ R_\nabla & = 0,\\
    d^*(e^{2\phi}H) & = 0,\\
    d^*_A(e^{2\phi}F_A) + \frac{e^{2\phi}}{2} *(F_A \wedge *H)& = 0,\\
    d^*_\nabla(e^{2\phi}R_\nabla) + \frac{e^{2\phi}}{2} *(R_\nabla \wedge *H)& = 0,\\
    S^{g} - 4 \Delta \phi - 4 |d\phi|^2 - \frac{1}{2}|H|^2 + \alpha(|R_\nabla|^2
- |F_A|^2) & = 0.
\end{split}
\end{equation}

\end{theorem}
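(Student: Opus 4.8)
The plan is to assemble the statement from the structural results already in place, observing that \eqref{eq:heterotic6d} is precisely the Killing spinor equations \eqref{eq:susy6d} together with the Bianchi identity \eqref{eq:dH6d}, augmented by the instanton condition $R_\nabla \cdot \eta = 0$. First I would establish the equivalence with the Strominger system. Applying Theorem \ref{th:strom} to the first three equations of \eqref{eq:heterotic6d} together with the Bianchi identity (the fifth equation) produces a Calabi-Yau structure $(X,\Omega)$ on $M$, with hermitian metric $g$ and connection $A$ solving \eqref{eq:Stromingersystemoriginal}, under the dictionary $H = d^c\omega$ and $d\phi = -\frac{1}{2} d\log\|\Omega\|_\omega$. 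This already yields the first, third, and fourth lines of \eqref{eq:Stromingersystem2}, using the Li-Yau reformulation of the dilatino equation as the conformally balanced equation \eqref{eq:LiYau}. It then remains to read off the fourth equation $R_\nabla \cdot \eta = 0$: since $\eta$ is the pure spinor determining the $\SU(3)$-structure, this instanton condition is equivalent to the Hermite-Yang-Mills equations \eqref{eq:Rinst2}, namely $R_\nabla^{0,2} = 0$ and $\Lambda_\omega R_\nabla = 0$, which is exactly the missing second line of \eqref{eq:Stromingersystem2}. Running the dictionary in reverse recovers \eqref{eq:heterotic6d} from any solution of \eqref{eq:Stromingersystem2}, closing the equivalence.

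For the final claim I would invoke Theorem \ref{th:FIVU}. A solution of \eqref{eq:heterotic6d} satisfies \eqref{eq:susy6d}, \eqref{eq:dH6d}, and the instanton condition \eqref{eq:Rinst1}, so Theorem \ref{th:FIVU} gives directly the equations of motion \eqref{eq:motion6d}, that is, all of \eqref{eq:motion6d-2} except the newly added fourth line. For the remaining equation of motion for $\nabla$, I would observe that once $R_\nabla \cdot \eta = 0$ holds, the auxiliary connection $\nabla$ is Hermite-Yang-Mills and so stands on exactly the same footing as the gauge connection $A$. The spinorial argument in the proof of Theorem \ref{th:FIVU} that deduces the $A$-equation $d^*_A(e^{2\phi}F_A) + \frac{e^{2\phi}}{2}*(F_A \wedge *H) = 0$ depends only on the curvature $F_A$ satisfying the instanton condition, not on $A$ being a dynamical field; repeating that computation with $F_A$ replaced by $R_\nabla$ yields the desired identity $d^*_\nabla(e^{2\phi}R_\nabla) + \frac{e^{2\phi}}{2}*(R_\nabla \wedge *H) = 0$.

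The main obstacle is precisely this last point. Because $\nabla$ is an auxiliary connection rather than a physical field, heterotic supergravity imposes no equation of motion on it a priori; the content of the furthermore is that the instanton condition $R_\nabla \cdot \eta = 0$ forces $\nabla$ to satisfy the same Yang-Mills-Bianchi balance as $A$. Making this rigorous requires tracing through the spinorial identities underlying Theorem \ref{th:FIVU} and confirming that each step uses only the instanton property of the curvature and the symmetric way in which $R_\nabla$ and $F_A$ enter the Bianchi identity \eqref{eq:dH6d} and the Ricci-type and scalar equations of \eqref{eq:motion6d-2}, so that the derivation transfers cleanly from $F_A$ to $R_\nabla$. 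Everything else is bookkeeping across the dictionary $H = d^c\omega$, $d\phi = -\frac{1}{2}d\log\|\Omega\|_\omega$ and the two cited theorems.
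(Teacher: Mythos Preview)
Your proposal is correct and matches the paper's approach essentially verbatim: the paper presents Theorem \ref{th:strom2} as a summary obtained by combining Theorem \ref{th:strom} with the equivalence \eqref{eq:Rinst1}$\Leftrightarrow$\eqref{eq:Rinst2} for the first part, and by invoking Theorem \ref{th:FIVU} for the equations of motion, noting explicitly (just before the theorem statement) that the additional equation of motion for $\nabla$ follows from the proof of Theorem \ref{th:FIVU} once the instanton condition $R_\nabla\cdot\eta=0$ is assumed. Your identification of the ``main obstacle'' and its resolution via the symmetry between $A$ and $\nabla$ is exactly the observation the paper makes.
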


The formal symmetry of the equations \eqref{eq:heterotic6d} in the connections $\nabla$ and $A$ (which flips a sign in the Bianchi identity), seems to be crucial for the understanding of the geometry and the moduli problem for the Strominger system, that we review in Section \ref{sec:moduli}.

\begin{remark}\label{rem:Stromoriginal}
We note that in the formulation of the Strominger system in \cite{Strom} the condition \eqref{eq:Rinst2} was not included, probably relying on the general principle that supersymmetry implies the equations of motion of the theory (which is not valid for the heterotic string even in perturbation theory \cite{OssaSvanes2}). The analysis in \cite{HullTurin,Hull2} takes this fact into account and proposes $\nabla = \nabla^+$ (given by changing $H \to -H$ in \eqref{eq:nablaminus}) as the preferred connection to solve \eqref{eq:dH6d}. From a mathematical perspective, this last statement has to be taken rather formally. It can be regarded as a perturbative version of Theorem \ref{th:FIVU}, in the following sense: expanding $g$ and $H$ in a formal power series in the parameter $\alpha$ it follows that
$$
R_{\nabla^+} \cdot \eta = O(\alpha),
$$
provided that $\nabla^- \eta = 0$ is satisfied (see e.g. \cite[App. C]{OssaSvanes2}).
\end{remark}


\subsection{The worldsheet approach and string classes}\label{sec:worldsheet}

A more fundamental approach to heterotic string theory is provided by the (non-linear) $\sigma$-model. We start with a brief (and rather naive) description of this theory, in order to explain the Bianchi identity in heterotic supergravity and its relation with the notion of \emph{string class}. 

The heterotic non-linear $\sigma$-model is a two-dimensional quantum field theory with (bosonic) fields given by smooth maps $f \in C^\infty(S,N)$ from an oriented surface $S$ into a target manifold $N$. To describe the classical action, we fix a metric $\gamma$ on $S$ -- with volume $\Vol_{\gamma}$ and scalar curvature $R_{\gamma}$ -- and \emph{background fields} $(g^0,\phi,b,A)$ on $N$. Here, $(g^0,\phi,A)$ are as in the previous section, and $b$ is a \emph{$B$-field}, that by now we treat as a (local) two-form on $N$ (more invariantly, it will correspond to a trivialization of a bundle $2$-gerbe with connection \cite{Waldorf}). The action is
\begin{equation}\label{eq:actionsigma}
\frac{1}{4\pi \alpha}\int_S |df|^2 \Vol_{\gamma} + f^*b - \frac{\alpha}{2}\phi R_{\gamma} \Vol_{\gamma} + \ldots
\end{equation}
where $|df|^2$ denotes the norm square of $df$ with respect to $\gamma$ and $g$. The terms denoted $\ldots$ correspond to the fermionic part of the action -- depending on the connections $A$ and $\nabla^- = \nabla^g - \frac{1}{2}g^{-1}db$, and a choice of spin structure on $S$ -- that we omit for simplicity (see e.g. \cite{Strom}).

The constant $\alpha$ in \eqref{eq:actionsigma} is $2 \ell^2$, where $\ell$ is the Planck length scale 
(and hence positive). The background fields in the $\sigma$-model appear as coupling functions (generalizing the notion of coupling constant). The value of the dilaton $\phi$ at a point determines the string coupling constant, i.e. the strength with which strings interact with each other. The dilaton is a special field, as the term
$$
\int_S \phi R_{\gamma} \Vol_{\gamma}
$$
destroys the conformal invariance of the action \eqref{eq:actionsigma} (classical Weyl invariance). Nonetheless, the inclusion of this term is crucial for the conformal invariance of the theory at the quantum level \cite{CFMP}.

The quantum theory constructed from the action \eqref{eq:actionsigma} is defined perturbatively, in an expansion in powers of $\alpha$. Conformal invariance of the \emph{effective action} corresponds to the vanishing of the \emph{$\beta$-functions}, which in the critical dimension $\dim N = 10$ are given by
\begin{equation}\label{eq:betafunctions}
\begin{split}
\beta^G & = \operatorname{Ric}^{g_0} - 2 \nabla^{g_0}(d\phi) - \frac{1}{4} H \circ H + \alpha \tr F \circ F - \alpha \tr R \circ R + O(\alpha^2),\\
\beta^B & = d^*(e^{2\phi}H)  + O(\alpha^2),\\
\beta^A & = d^*_A(e^{2\phi}F) + \frac{e^{2\phi}}{2} *(F \wedge *H)  + O(\alpha^2),\\
\beta^\phi & = S^{g_0} - 4 \Delta \phi - 4 |d\phi|^2 - \frac{1}{2}|H|^2 + \alpha(|R|^2 - |F|^2)  + O(\alpha^2),
\end{split}
\end{equation}
where $H$ is a three-form on $N$ and $R$ is the curvature of an auxiliary connection $\nabla_0$ on $TN$, locally related with $b$ and $A$ by the Green-Schwarz ansatz \eqref{eq:Hsugra}. The sudden appearance of the extra connection $\nabla_0$ is explained by the way the fields are treated in perturbation theory, as formal expansions in the parameter $\alpha$: even though the connection in \eqref{eq:Hsugra} in the perturbation theory analysis is $\nabla^+$ \cite{HullTownsend}, the truncation to second order in $\alpha$-expansion enables to remove its dependence from the rest of fields (cf. Remark \ref{rem:Stromoriginal}). 

We observe that the classical equations of motion of the heterotic supergravity in the target \eqref{eq:motionHetsugra} are given by first-order conditions for conformal invariance (in $\alpha$-expansion) of the quantum theory in the worldsheet of the string. Similarly, the killing spinor equations \eqref{eq:susy10d} are obtained as first order conditions in $\alpha$-expansion in order to define a supersymmetric theory \cite{Hull2,Strom}. When the $\beta$-functions vanish for a choice of background fields, the heterotic $\sigma$-model is expected to yield a two-dimensional superconformal field theory (with $(0,2)$-supersymmetry, when the killing spinor equations are satisfied). Unfortunately, to the present day a closed form of \eqref{eq:betafunctions} to all orders in $\alpha$-expansion is unknown. Despite of this fact, several rigorous attempts to construct this theory by indirect methods can be found in the literature (see e.g. \cite{Donagi,GSVI,MSV}).

Aside from the disturbing perturbation theory, the (fermionic) terms omitted in the classical action \eqref{eq:actionsigma} provide a conceptual explanation of the Bianchi identity \eqref{eq:dHsugra}, that we have ignored so far. The path integral quantization of the $\sigma$-model yields a (determinant) line bundle $\cL$ over a space of Bose fields 
$$
B = \textrm{Conf}(S) \times C^\infty(S,N)/\Diff_0(S),
$$
where $\textrm{Conf}(S)$ is the space of conformal structures on $S$ and $\Diff_0(S)$ is the identity component of the diffeomorphism group of $S$. There is a canonical section $s_0$ of $\cL$ -- determined by the fermionic terms in the action -- that should be integrated over $B$, and hence one tries to find a trivialization of $\cL$, so as to express $s_0$ as a function on $B$. The obstruction to finding a trivialization is called the \emph{anomaly}. This connection between anomalies and determinant line bundles was pioneered by Atiyah and Singer, in close relation to the Index Theorem. Here we follow closely a refined geometric version by Witten \cite{WittenCMP} and Freed \cite{Freed}. 

The construction of the line bundle $\cL$ in \cite{Freed} assumes the compactification ansatz $N = \RR^4 \times M^6$, discussed in the previous section. Further, $M$ is endowed with an integrable almost complex structure $J$ compatible with $g$, such that $c_1(TM,J) = 0$. Let $(E,h)$ be a smooth hermitian vector bundle over $M$. We assume in this section that $c_1(E) = 0$ and that $E$ has rank $16$. The aim is to give an explanation of the \emph{topological Bianchi identity}
\begin{equation}\label{eq:bianchitop}
dH = \alpha \(\tr R_\nabla \wedge R_\nabla - \tr F_A \wedge F_A\),
\end{equation} 
as an equation for an arbitrary three-form $H \in \Omega^3$ on $M$ and a pair of special unitary connections $\nabla$ on $(TM,J)$ and $A$ on $(E,h)$.  Observe that \eqref{eq:bianchitop} implies a condition in the real first Pontryagin class $p_1(M)$ of $M$, namely
\begin{equation}\label{eq:c2dR}
p_1(E) = p_1(M) \in H^{4}(M,\RR).
\end{equation}

Given the data $(g,\nabla,A)$, Freed constructs in \cite{Freed} a (Pfaffian) complex line bundle
$$
\cL \to B,
$$
which is trivializable provided that
\begin{equation}\label{eq:p1dRZ}
\frac{1}{2}p_1(E) = \frac{1}{2}p_1(M) \in H^{4}(M,\ZZ).
\end{equation}
Furthermore, this line bundle is endowed with a natural connection $\mathbb{A}^0$ on $\cL$ whose curvature $F_{\mathbb{A}^0}$ can be identified with
$$
4\pi i F_{\mathbb{A}^0} \equiv \tr R_\nabla \wedge R_\nabla - \tr F_A \wedge F_A.
$$
Note here that a $4$-form $\rho$ on $Y$ can be regarded as a $2$-form $\psi$ on $C^\infty(S,N)$, by
$$
\psi(V_1,V_2) = \int_S f^* \iota_{V_1}\iota_{V2} \rho,
$$
where $f \in C^\infty(S,N)$ and $V_1,V_2 \in T_f C^\infty(S,N)$, where the tangent space at $f$ is identified with $C^\infty(S,f^*TN)$.

Assuming that \eqref{eq:p1dRZ} is satisfied, we try to parametrize flat connections with trivial holonomy on $\cL$.
This is an important question from a physical perspective, as different trivializations of $\cL$ correspond to different partition functions of the heterotic string theory. 
The answer is closely related to the notion of \emph{string class} \cite{Redden}, that we introduce next. Let $P_g$ be the bundle of special unitary frames of the hermitian metric $g$ on $(TM,J)$, with structure group $\SU(3)$, and let $P_h$ be the bundle of special unitary frames of the hermitian metric $g$, with structure group $\SU(r)$. Consider the principal bundle $p \colon P \to M$ given by 
$$
P = P_g \times_M P_h,
$$
with structure group $G = \SU(3) \times \SU(r)$. Let $\sigma$ denote the (left) Maurer-Cartan $1$-form on $G$. We fix a biinvariant pairing on $\mathfrak{g}$
$$
c = \alpha (\tr_{\mathfrak{su}(3)} - \tr_{\mathfrak{u}(r)}).
$$
We assume that $c$ is suitably normalized so that the $[\sigma_3] \in H^3(G,\ZZ)$, where
$$
\sigma_3 = \frac{1}{6}c(\sigma,[\sigma,\sigma]).
$$ 

\begin{definition}[\cite{Redden}]\label{def:stringclass}
A \emph{string class} on $P$ is a class $[\hat H] \in H^3(P,\ZZ)$ such that the restriction of $[\hat H]$ to any fibre of $P$ yields the class $[\sigma_3] \in H^3(G,\ZZ)$.
\end{definition}

String classes form a torsor over $H^3(M,\ZZ)$, where the action is defined by pull-back and addition \cite[Prop. 2.16]{Redden}: 
$$
[\hat H] \to [\hat H] + p^*[H],
$$
where $p \colon P \to M$ is the canonical projection on the principal bundle $P$ and $[H] \in H^3(M,\ZZ)$. 

Flat connections with trivial holonomy on the line bundle $\cL$ were interpreted by Bunke in \cite{Bunke} as (very roughly) enriched representatives of a string class in $P$. Here we give a simple-minded version of his construction, by choosing special $3$-form representatives. Let $[\hat H] \in H^3(P,\ZZ)$ be a string class. For our choice of connection $\theta = \nabla \times A$ on $P$ we can take a $G$-invariant representative $\hat H \in \Omega^3(P)$ 
of the form (see Section \ref{sec:gg})
\begin{equation}\label{eq:hatH}
\hat H = p^* H - CS(\theta)
\end{equation}
for a choice of $3$-form $H \in \Omega^3(M)$, determined up to addition of an exact three-form, where
$$
CS(\theta) = - \frac{1}{6}c(\theta \wedge [\theta,\theta]) + c(F_\theta \wedge \theta).
$$
Using that $d \hat H = 0$, we obtain that $(H,\nabla,A)$ solves the topological Bianchi identity \eqref{eq:bianchitop}, since $d CS(\theta) = c(F_\theta \wedge F_\theta)$. In conclusion, up to addition of an exact three-form, a string class determines a preferred solution of the topological Bianchi identity for a fixed connection $\theta$.

We construct now a flat connection $\mathbb{A}^H$ on $\cL$ using this fact. The connection $\mathbb{A}^H$ is defined by modification of $\mathbb{A}^0$ as follows
$$
d_{\mathbb{A}^H} \log s = d_{\mathbb{A}^0} \log s - \frac{\alpha^{-1}}{4\pi i} \int_S f^* H,
$$
where the left hand side is evaluated at the point $[(x,f)] \in B$ for $f \in C^\infty(S,N)$. The curvature of $\mathbb{A}^H$ can be identified with
$$
4\pi i F_{\mathbb{A}^H} \equiv \tr R_\nabla \wedge R_\nabla - \tr F_A \wedge F_A - \alpha^{-1} dH,
$$
and therefore $\mathbb{A}^H$ is flat. In \cite[Th. 4.14]{Bunke}, it is proved that $\mathbb{A}^H$ admits a parallel unit norm section $s$, therefore providing a trivialization of $\cL$. Note here that if we chose a different three-form $H + db$ on $M$ to represent $[\hat H]$, then this corresponds to a gauge transformation
$$
s \to e^{\frac{\alpha^{-1}}{4\pi i} \int_S f^* b} s
$$
of the section $s$.

String classes were introduced in \cite{Redden} to parametrize \emph{string structures} up to homotopy. String structures emerged from two-dimensional supersymmetric field theories in work by Killingback and Witten, and several definitions have been proposed so far. Given a spin bundle $P$ over $M$, McLaughlin defines a string structure on $P$ as a lift of the structure group of the looped bundle $LP = C^\infty(S^1,P)$ over the loop space
$$
LM = C^\infty(S^1,M)
$$ 
from $\operatorname{LSpin}(k) = C^\infty(S^1,\operatorname{Spin}(k))$ to its universal Kac-Moody central extension. Stolz and Teichner interpreted a string structure as a lift of the structure group of $P$ from $\operatorname{Spin}(k)$ to a certain three-connected extension, the topological group $\operatorname{String}(k)$. For recent developments on this topic, in relation to the topological Bianchi identity \eqref{eq:bianchitop}, we refer the reader to \cite{Sati,SSS}.


\section{Generalized geometry and the moduli problem}\label{sec:moduli}

\subsection{The Strominger system and generalized geometry}\label{sec:gg}

In this section we review on recent developments on the geometry of the Strominger system, based on joint work of the author with Rubio and Tipler \cite{GRT}. As we will see, the interplay of the Strominger system with the notion of string class (see Definition \ref{def:stringclass}) leads naturally to an interesting relation with Hitchin's theory of generalized geometry \cite{Hit1}, proposed in \cite{GF}.

To start the discussion, we draw a parallel between the Strominger system and Maxwell equations in electromagnetism (cf. \cite{TsengYau}). The Maxwell equations in a $4$-manifold $Y$ take the form
\begin{align*}
dF & = 0\\
d*F & = j_e
\end{align*}
where $F$ is a two-form -- the electromagnetic field strength -- and $j_e$ is the three-form \emph{electric current}. The cohomology class $[F] \in H^2(Y,\RR)$ is known as the \emph{magnetic flux} of the solution. Whereas in classical electromagnetism the magnetic flux  is allowed to take an arbitrary value in $H^2(Y,\RR)$, in the quantum theory Dirac's law of charge/flux quantization implies that magnetic fluxes are constrained to live in a full lattice inside $H^2(Y,\RR)$, namely
$$
[F/2\pi] \in H^2(Y,\ZZ).
$$
This changes the geometric nature of the problem. A geometric model which implements flux quantization takes the electromagnetic field to be the $i/2\pi$ times the curvature of a connection $A$ on a $U(1)$ line bundle over $Y$.
More generally, fixing the class $[F/2\pi] \in H^2(Y,\RR)$ amounts to fix the isomorphism class of a Lie algebroid over $Y$, and we can regard $A$ as a `global splitting' of the sequence defining the Lie algebroid. 

Given a solution of the Strominger system \eqref{eq:Stromingersystem2}, the three-form $d^c \omega$ and the connections $\nabla$ and $A$ determine a solution of the Bianchi identity \eqref{eq:bianchitop}. Relying on the discussion in Section \eqref{sec:worldsheet}, this data determines a \emph{real string class} (the analogue of Definition \ref{def:stringclass} in real cohomology). The aim here is to understand how the global nature of the geometric objects involved in the Strominger system changes, upon fixing the real string class of the solutions.

We will consider a simplified setup, where the principal bundle considered in Definition \ref{def:stringclass} is not necessarily related to the tangent bundle of the manifold. Let $M$ be a compact spin manifold $M$ of dimension $2n$. Let $P$ be a principal bundle, with  structure group $G$. We assume that there exists a non-degenerate pairing $c$ on the Lie algebra $\mathfrak{g}$ of $G$ such that the corresponding first Pontryagin class of $P$ vanishes
\begin{equation}\label{eq:p1moduli}
p_1(P) = 0 \in H^4(M,\RR).
\end{equation}
Let $\cA$ denote the space of connections $\theta$ on $P$. We denote by $\Omega^3_0 \subset \Omega^3_\CC$ the space of complex $3$-forms $\Omega$ such that
$$
T^{0,1}:=\lbrace V\in TM \otimes \CC \;\vert\; \iota_V \Om =0 \rbrace
$$
determines an almost complex structure $J_\Omega$ on $M$, that we assume to be non-empty. Consider the parameter space
$$
\cP \subset \Omega_0^3 \times \cA \times \Omega^2,
$$
defined by
$$
\cP=\lbrace (\Om,\theta,\omega)\;\vert\; \om \text{ is }
J_\Om-\text{compatible} \rbrace.
$$
The points in $\cP$ are regarded as unknowns for the system of
equations
\begin{equation}\label{eq:stromnonabelian}
  \begin{split}
    d\Om & =0, \ \ \ \ \ \ \ \ \ \ \ \ \ \ d(\vert\vert \Om \vert \vert_\om \om^2)  =  0,\\
    F_\theta^{0,2} & = 0, \ \ \ \ \ \ \ \ \ \ \ \ \ \ \ \ \ \ \ F_\theta\wedge \omega^2 = 0  ,\\
    dd^c \om- c(F_\theta\wedge F_\theta) & = 0,
  \end{split}
\end{equation}
where $F_\theta$ denotes the curvature of $\theta$, given explicitly by
$$
F_\theta = d \theta + \frac{1}{2}[\theta,\theta] \in \Omega^2(\ad P),
$$
where $\theta$ is regarded as a $G$-invariant $1$-form in $P$ with
values in $\mathfrak{g}$ and the bracket is the one on the Lie
algebra. The induced covariant derivative on the bundle of Lie
algebras $\ad P = P \times_G \mathfrak{g}$ is
$$
\iota_V d_\theta r = [\theta^\perp V,r],
$$
which satisfies $d_\theta \circ d_\theta = [F_\theta,\cdot]$.
 
To see the relation with the Strominger system, consider the particular case that $P$ is the fibred product of the bundle of oriented frames of $M$ and an $SU(r)$-bundle, with 
\begin{equation}\label{eq:pairingc}
  c = \alpha(- \tr -  c_{\mathfrak{gl}}).
\end{equation}
Here, $c_{\mathfrak{gl}}$ is a non-degenerate invariant metric on $\mathfrak{gl}(2n,\RR)$, which extends the non-degenerate Killing form $-\tr$ on $\mathfrak{sl}(2n,\RR) \subset \mathfrak{gl}(2n,\RR)$. Then, solutions $(\Omega,\omega,\theta)$ of the system \eqref{eq:stromnonabelian} correspond to solutions of \eqref{eq:Stromingersystem2}, provided that $\theta$ is a product connection $\nabla \times A$ and $\nabla$ is compatible with the hermitian structure $(\Omega,\omega)$. The compatibility between $\nabla$ and $(\Omega,\omega)$ leads to some difficulties in the construction, that we shall ignore here.

Going back to the general case, following Definition \ref{def:stringclass} we denote
$$
H^3_{str}(P,\RR) \subset H^3(P,\RR)
$$
the set of \emph{real string classes} in $P$. By condition \eqref{eq:p1moduli} this set is non-empty, and it is actually a torsor over $H^3(M,\RR)$. We note that any solution $x = (\Omega,\omega,\theta) \in \cP$ of \eqref{eq:stromnonabelian} satisfies
$$
dd^c\omega - c (F_{\theta} \wedge F_{\theta}) = 0,
$$
and therefore $x$ induces a string class
$$
[\hat H_x] \in H^3_{str}(P,\RR),
$$
where
$$
\hat H_x = p^* d^c\omega - CS(\theta).
$$

To understand the geometric meaning of the set of solutions with fixed string class, we note that a choice $[\hat H] \in H^3_{str}(P,\RR)$ determines an isomorphism class of exact Courant algebroids over $P$ (see e.g. \cite{G1annals}). More explicitly, for a choice of representative $\hat H \in [\hat H]$, the isomorphism class of exact Courant algebroids is represented by
$$
\hat E = TP \oplus T^*P,
$$
with (Dorfman) bracket
$$
[\hat X + \hat \xi,\hat Y + \hat \eta] = [\hat X, \hat Y] + L_{\hat X}\hat \eta - \iota_{\hat Y} d \hat \xi + \iota_{\hat Y}\iota_{\hat X} \hat H,
$$
and pairing
$$
\langle \hat X + \hat \xi, \hat X + \hat \xi \rangle = \hat \xi(\hat X),
$$
for vector fields $\hat X, \hat Y$ and $1$-forms $\hat \xi, \hat \eta$ on $P$. 

The exact Courant algebroid $\hat E$ comes equipped with additional structure, corresponding to the string class condition for $[\hat H]$. Firstly, we note that $[\hat H]$ is fixed by the $G$-action on $P$ -- as it always admits a $G$-invariant representative of the form \eqref{eq:hatH} -- and therefore $\hat E$ is $G$-equivariant. Secondly, $\hat E$ admits a \emph{lifted $G$-action} \cite{BuCaGu}, given by an algebra morphism $\rho\colon \mathfrak{g} \to \Omega^0(\hat E)$ making commutative the diagram
\begin{equation}
\label{eq:extendeddiagram}
\begin{gathered}
  \xymatrix{\mathfrak{g} \ar[r]^-{\rho} \ar[dr]_-{\psi} & \Omega^0(\hat E) \ar[d]^{\pi} \\  & \Omega^0(TP)}
\end{gathered}
\end{equation}
and such that the infinitesimal $\mathfrak{g}$-action on $\Omega^0(\hat E)$ induced by the Courant bracket integrates to a (right) $G$-action on $\hat E$ lifting the action on $P$. The previous data is determined up to isomorphism by the choice of real string class (see \cite[Prop. 3.7]{BarHek}).

To be more explicit, writing $\rho(z) = Y_z + \xi_z$ for $z \in \mathfrak{g}$ we have
$$
d\xi_z = i_{Y_z} \hat H
$$
Then, for a choice of connection $\theta$ on $P$ there exists a $2$-form $\hat b$ on $P$ such that
$$
\rho(z) = e^{\hat b}(Y_z - c(z,\theta \cdot)),
$$
and
$$
\hat H = p^*H - CS(\theta) + d \hat b,
$$
where $e^{\hat b} (\hat X + \hat \xi) = \hat X + \iota_{\hat X} \hat b + \hat \xi$.

Applying the general theory in \cite{BuCaGu}, the exact Courant algebroid $\hat E$ can be reduced, by means of the lifted action $\rho$, to a (transitive) Courant algebroid $E$ over the base manifold $M$, whose isomorphism class only depends on the choice of string class $[\hat H]$. Any choice of connection $\theta$ on $P$ determines an isomorphism
$$
E \cong TP/G \oplus T^*
$$
and a $3$-form $H$ on $M$, uniquely up to exact $3$-forms on $M$, such that the symmetric pairing on $E$ is given by
$$
\langle \hat X + \xi, \hat Y + \eta\rangle = \frac{1}{2}(i_{X}\eta + i_{Y}\xi) + c(\theta \hat X, \theta \hat Y),
$$
where $p \hat X = X$, and $p \hat Y = Y$, and the Dorfman Bracket is given by
\begin{equation}\label{eq:bracket}
\begin{split}
[\hat X+\xi,\hat Y+\eta] & = [\hat X, \hat Y] + L_{X}\eta - \iota_{Y}d\xi + \iota_{Y}\iota_{X}H\\
& + 2c(d_\theta(\theta \hat X),\theta \hat Y) + 2c(F_\theta (X,\cdot),\theta \hat Y) - 2c(F_\theta(Y,\cdot),\theta \hat X).
\end{split}
\end{equation}

In \cite{GRT} it is proved that solutions of the system \eqref{eq:stromnonabelian} (and hence of the Strominger system \eqref{eq:Stromingersystem2}) with fixed string class $[\hat H]$, can be recasted in terms of natural geometry in the Courant algebroid $E$ over $M$. This implies a drastic change in the symmetries of the problem: the system \eqref{eq:stromnonabelian}, with natural symmetries given by the automorphism group $\Aut P$ of $P$, is preserved by the automorphism group of the Courant algebroid $E$ (see \cite[Prop. 4.7]{GRT}) once we fix the string class. In particular, this includes $B$-field transformations for any closed $2$-form $b \in \Omega^2$ on $M$, given by
$$
\hat X \to \hat X + \iota_X b.
$$
To give the main idea, we have to go back to the physical origins of the Strominger system in supergravity, as explained in Section \ref{sec:sugra}. By Theorem \ref{th:strom2}, the system \eqref{eq:stromnonabelian} is equivalent to the Killing spinor equations 
\begin{equation}\label{eq:heterotic6dabstract}
\begin{split}
    \nabla^- \eta & = 0,\\
    (d\phi + \frac{1}{2}H) \cdot \eta & = 0,\\
    F_\theta \cdot \eta & = 0,\\
    dH - c(F_\theta \wedge F_\theta) & = 0,
\end{split}
\end{equation}
for a tuple $(g,\phi,H,\theta,\eta)$, given by a riemannian metric $g$ on $M$, a function $\phi$, a three-form $H$, a connection $\theta$ on $P$, and a non-vanishing pure spinor $\eta$ with positive chirality. 

\begin{theorem}[\cite{GRT}]\label{th:natural}
Solutions of the Killing spinor equations \eqref{eq:heterotic6dabstract} with fixed string class $[\hat H]$ are  preserved by the automorphism group of $E$.
\end{theorem}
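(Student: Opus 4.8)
The strategy is to show that the entire tuple $(g,\phi,H,\theta,\eta)$ can be encoded as geometric data canonically attached to the transitive Courant algebroid $E$ over $M$, in such a way that the system \eqref{eq:heterotic6dabstract} becomes a set of conditions intrinsic to $E$; invariance under $\Aut E$ is then formal. First I would use that, by the reduction procedure of \cite{BuCaGu} recalled above, fixing the real string class $[\hat H]$ fixes the isomorphism class of $E$, and that a choice of connection $\theta$ on $P$ is the same as a choice of isotropic splitting $E \cong TP/G \oplus T^*$ with pairing and Dorfman bracket \eqref{eq:bracket}. In this splitting, the riemannian metric $g$ together with $\theta$ determine a generalized metric $\mathbb{G}$ on $E$, that is, a self-adjoint involution whose $+1$-eigenbundle $V_+$ projects isomorphically onto $TM$ and pulls back $g$; conversely $\mathbb{G}$ recovers $(g,\theta)$ up to the $B$-field ambiguity inherent in the splitting. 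A crucial bookkeeping point is that the Bianchi identity $dH - c(F_\theta\wedge F_\theta)=0$ is precisely the condition for \eqref{eq:bracket} to satisfy the Jacobi identity, hence it is built into the very existence of $E$ and needs no separate verification under automorphisms.

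Next I would translate the remaining, spinorial equations. The positive-chirality pure spinor $\eta$ reduces the structure group of $V_+\cong TM$ to $\SU(n)$, equivalently it records the compatible almost complex structure $J_\Omega$ and the holomorphic volume form; the dilaton $\phi$ should be repackaged as a divergence operator $\operatorname{div}$ on $E$ weighted by $e^{-2\phi}$. The key computation is to verify that the canonical generalized connection $D$ pinned down by the pair $(\mathbb{G},\operatorname{div})$ differentiates the induced generalized spinor exactly as $\nabla^-$ differentiates $\eta$, so that $\nabla^-\eta = 0$ becomes the statement that $\eta$ is $D$-parallel, while the dilatino equation $(d\phi + \tfrac12 H)\cdot\eta = 0$ together with the instanton condition $F_\theta\cdot\eta = 0$ become the annihilation of the generalized spinor by the associated generalized Dirac operator (here one invokes Theorem \ref{th:strom2} to know these are the right repackaged equations). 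All three conditions are then expressed using only the Dorfman bracket, the pairing $c$, the generalized metric and the divergence.

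The conclusion then follows formally. Given $\varphi\in\Aut E$ covering a diffeomorphism of $M$ and a solution $x$, the transported data $\varphi\cdot\mathbb{G} = \varphi\,\mathbb{G}\,\varphi^{-1}$, $\varphi_*\operatorname{div}$ and $\varphi_*\eta$ again satisfy the intrinsic equations, because $\varphi$ intertwines bracket, pairing, generalized metric and the operators built from them; reading this transformed data back through the dictionary of the previous paragraph produces a genuine new solution of \eqref{eq:heterotic6dabstract}. In particular this recovers invariance under $B$-field transformations by closed $2$-forms and under gauge transformations of $P$, which in a splitting move $(H,\theta)$ but leave $[\hat H]$, and hence $E$, unchanged.

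The main obstacle is the dictionary of the second paragraph: establishing that $\eta$ corresponds to genuinely intrinsic data on $E$, and above all that the generalized connection determined by $(\mathbb{G},\operatorname{div})$ reproduces $\nabla^-$ on this generalized spinor. This requires the structure theory of generalized connections compatible with a generalized metric, which, unlike the Levi-Civita connection, are \emph{not} unique; the subtlety is to fix the residual torsion freedom by the divergence normalization so that exactly $\phi$ and $H$ enter, and to check that the relevant Dirac-type operators are independent of the surviving ambiguity. Once this identification is secured, the interpretation of the Bianchi identity as the Jacobi identity of $E$ renders the whole system manifestly $\Aut E$-equivariant.
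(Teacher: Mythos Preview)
Your proposal is correct and follows essentially the same approach as the paper's sketch: encode $(g,H,\theta)$ as a generalized metric on $E$, use the dilaton $\phi$ to single out a preferred compatible generalized connection (the paper phrases this as a choice of torsion-free compatible connection rather than a divergence operator, but the two packagings are equivalent), and then observe that the Killing spinor equations become intrinsic conditions on $E$. Your explicit identification of the Bianchi identity with the Jacobi identity of the bracket \eqref{eq:bracket}, and your careful flagging of the non-uniqueness of generalized Levi-Civita connections as the main technical obstacle, are exactly the points that the paper leaves to the reference \cite{GRT}.
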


The proof is based on the fact that a tuple $(g,H,\theta)$ satisfying the Bianchi identity $dH = c(F_\theta \wedge F_\theta)$ is equivalent to a generalized metric on $E$, while the function $\phi$ determines a specific choice of torsion-free compatible connection (which in generalized geometry is not unique). Furthermore, in \cite{Minasian,GF} it was proved in that the equations of motion \eqref{eq:motion6d-2} correspond to the Ricci and scalar flat conditions for the metric connection determined by $(g,\phi,H,\theta)$.

\subsection{The moduli problem}\label{sec:modulibis}

In this section we review on the recent progress made in the study of the moduli space of solutions for the Strominger, following \cite{GRT} (see also \cite{AGS,CGT,OssaSvanes}).

The moduli problem for the Strominger system \eqref{eq:Stromingersystem2} in dimension $n=1$ reduces to the study of moduli space of pairs $(X,\cE)$, where $X$ is an elliptic curve and $\cE$ is a polystable vector bundle over $X$, with rank $r$ and degree $0$ (see Section \ref{sec:defi}). Due to results of Atiyah and Tu (see \cite{Tu} and references therein), this moduli space corresponds to the fibred $r$-th symmetric product of the universal curve over the moduli space of elliptic curves
$$
\mathbb{H}/\operatorname{SL}(2,\ZZ),
$$
where $\mathbb{H} \subset \CC$ denotes the upper half-plane. 

For $n = 2$, Example \ref{ex:surface} shows that the moduli problem corresponds essentially to the study of tuples $(X,[\omega],\cE,\cT)$, where $X$ is a complex surface with trivial canonical bundle (a $K3$ surface or a complex torus), $[\omega]$ is a K\"ahler class on $X$, $\cE$ is a degree zero polystable holomorphic vector bundle over $X$ satisfying \eqref{eq:c2BC}, and $\cT$ is a polystable holomorphic vector bundle with the same underlying smooth bundle as $TX$. Although the moduli problem for such tuples is not fully understood even in the algebraic case, it can be tackled with classical methods of algebraic geometry and K\"ahler geometry (see \cite{AGG} and references therein).
 
As observed earlier in this work, the critical dimension for the study of the Strominger system is $n =3$. This is the lowest dimension for which the Calabi-Yau manifold $(X,\Omega)$ may be non-k\"ahlerian, and therefore new phenomena is expected to occur. To see this explicitly, we review the construction of the local moduli for the Strominger system in \cite{GRT,GRT2}. For simplicity, we will follow the setup introduced in the previous section, and deal with the system \eqref{eq:stromnonabelian}, for a six-dimensional compact spin manifold $M$. We start defining the symmetries that we will use to construct the infinitesimal moduli for \eqref{eq:stromnonabelian}. These are given by the group
$$
\Aut_0 P \subset \Aut P 
$$
where $\Aut P$ is the group of automorphism of $P$, that is, the group of $G$-equivariant diffeomorphisms of $P$, and $\Aut_0 P$ denotes the connected component of the identity. Given $g \in \Aut P$  we denote by $\check g \in \Diff(M)$ the diffeomorphism in the base that it covers. Then, $\Aut_0 P$ acts on $\cP$ by
$$
g \cdot (\Omega,\omega,\theta) = (\check g_* \Omega,\check g_*\omega,g\cdot \theta),
$$
preserving the subspace of solutions of \eqref{eq:stromnonabelian}. We define the moduli space of solutions of \eqref{eq:stromnonabelian} as the following set
$$
\mathcal{M} = \Aut_0 P \backslash \{x \in \cP: x\text{ is a solution of } \eqref{eq:stromnonabelian}\}
$$

\begin{theorem}[\cite{GRT}]
The system \eqref{eq:stromnonabelian} is elliptic.
\end{theorem}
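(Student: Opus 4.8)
The plan is to prove ellipticity in the sense of the deformation theory of \eqref{eq:stromnonabelian}: to exhibit the complex computing the infinitesimal moduli at a solution $x = (\Om,\theta,\om) \in \cP$ and to show that its associated principal symbol sequence is exact off the zero section. Since the equations are invariant under the gauge group $\Aut_0 P$, the natural object is the two-term complex
$$
\Lie(\Aut_0 P) \xrightarrow{\ \delta_x\ } T_x\cP \xrightarrow{\ \cL_x\ } \cW,
$$
where $\delta_x$ is the infinitesimal $\Aut_0 P$-action, $\cL_x$ is the linearization of the left-hand sides of \eqref{eq:stromnonabelian}, and $\cW$ is the corresponding space of form-valued outputs. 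The system is elliptic precisely when the rolled-up operator $\delta_x^* \oplus \cL_x$ is overdetermined elliptic, equivalently when this symbol sequence is exact at $T_x\cP$ with $\sigma(\delta_x)$ injective. Because \eqref{eq:stromnonabelian} is of mixed order --- first order in $\theta$ and in $\Om$, first order in $\om$ through the conformally balanced equation, and second order in $\om$ through the Bianchi identity --- ellipticity must be read in the graded Douglis--Nirenberg sense, and the first task is to assign weights to the variables and equations so that all top-order parts match.

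First I would record the ingredients. The Lie algebra $\Lie(\Aut P)$ is the space of $G$-invariant vector fields on $P$, i.e.\ sections of the Atiyah algebroid, sitting in
$$
0 \to \Omega^0(\ad P) \to \Lie(\Aut P) \to \Omega^0(TM) \to 0,
$$
so $\delta_x$ has an $\ad P$-part acting by $d_\theta$ on $\theta$ and a diffeomorphism part acting by Lie derivative on $(\Om,\om)$. Next I would linearize each equation: $d\Om=0$ gives $d\dot\Om=0$; the conformally balanced equation linearizes to a first-order operator in $(\dot\Om,\dot\om)$; the two curvature conditions linearize to $(d_\theta\dot\theta)^{0,2}$ and $d_\theta\dot\theta\wedge\om^2$ plus a zeroth-order $\dot\om$ term; and the Bianchi identity linearizes to $dd^c\dot\om - 2c(F_\theta\wedge d_\theta\dot\theta)$. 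The constraint defining $\cP$ --- that $\om$ be $J_\Om$-compatible --- must be differentiated as well, coupling $\dot\Om$ and $\dot\om$. The relation $\cL_x\circ\delta_x=0$ is then exactly the $\Aut_0 P$-invariance of \eqref{eq:stromnonabelian}, which holds by naturality.

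The heart of the argument is the principal symbol. With Douglis--Nirenberg weights chosen so that the Bianchi identity counts one order higher than the remaining equations while all unknowns carry equal weight, the coupling term $c(F_\theta\wedge d_\theta\dot\theta)$ is subleading relative to $dd^c\dot\om$ and drops out of the top-order symbol, as does the zeroth-order $\dot\om$ contribution to the curvature equations. The symbol sequence then splits into three blocks. The connection block, namely $\sigma(d_\theta)$ followed by $\bigl(\sigma(F^{0,2}),\sigma(\Lambda_\om F)\bigr)$ together with the Coulomb gauge symbol $\sigma(d_\theta^*)$, is the principal symbol of the Hermite--Yang--Mills deformation complex, elliptic by the Dolbeault/Atiyah--Hitchin--Singer theory already invoked in Section \ref{sec:gauge}. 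The complex-structure block, coming from $d\dot\Om=0$ modulo the diffeomorphism gauge and $J_\Om$-compatibility, is the symbol of a $\dbar$-type (de Rham) complex and is exact off the zero section. Finally the $\om$-block reduces to the second-order symbol of $dd^c$ paired with the first-order conformally balanced symbol; here I would use the balanced identity $2i\Lambda_\om\dbar\partial = \Delta_\om$ recorded above to identify the relevant principal symbol with that of the Hodge Laplacian, which is elliptic. Exactness of the assembled graded symbol sequence then yields ellipticity.

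The main obstacle I anticipate is precisely this last assembly: showing that the three blocks do not interact at top order and that the combined, gauge-fixed symbol is injective off the zero section. The difficulties are (i) choosing the diffeomorphism and Coulomb gauge-fixing so that $\sigma(\delta_x)$ is injective and its image is the full symbolic kernel of $\sigma(\cL_x)$; (ii) handling the non-K\"ahler setting, where $\partial$ and $\dbar$ no longer anticommute and $dd^c\om\neq 0$, so that the $\om$-block is genuinely second order and cannot be collapsed by K\"ahler identities; and (iii) tracking the $J_\Om$-compatibility constraint, which entangles $\dot\Om$ and $\dot\om$ and must be imposed before, not after, computing the symbol. Once the weights are fixed and the gauge-fixing chosen, verifying exactness becomes a fibrewise linear-algebra computation on each $T^*_mM\setminus\{0\}$, reducing to the ellipticity of the three model operators above.
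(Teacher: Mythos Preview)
The paper does not include a proof of this theorem. It is stated as a result imported from the reference \cite{GRT}, and the only hint the surrounding text gives about the method is that \cite{GRT} constructs an elliptic complex $S^*$ whose first cohomology $H^1(S^*)$ parametrizes infinitesimal deformations modulo the $\Aut_0 P$-action. There is therefore no in-paper argument against which your proposal can be compared line by line.

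That said, your outline is the standard strategy and is consistent with what the paper indicates: build the two-step complex $\Lie(\Aut_0 P)\to T_x\cP\to\cW$, assign Douglis--Nirenberg weights to accommodate the mixed order, and verify exactness of the symbol sequence off the zero section. Your weight assignment (all unknowns equal, Bianchi one order higher than the remaining equations) correctly makes the curvature coupling $c(F_\theta\wedge d_\theta\dot\theta)$ subleading in the Bianchi row and the zeroth-order $\dot\omega$ contribution subleading in the Hermite--Yang--Mills rows, so the symbol does have the block-triangular shape you describe. The caution I would add is that the $\Omega$- and $\omega$-blocks are \emph{not} fully decoupled at top order: the conformally balanced equation involves $\|\Omega\|_\omega$, and the $J_\Omega$-compatibility constraint on $\cP$ entangles $\dot\Omega$ and $\dot\omega$ before one ever computes a symbol, so treating them as two independent elliptic complexes is not quite right --- the honest computation is for a single block in the $(\dot\Omega,\dot\omega)$ variables. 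You have in fact flagged exactly this as obstacle (iii), so you are aware of it; just do not expect the three-block picture to survive literally. As written, your proposal is a correct plan rather than a proof, and you have identified the places where the genuine work sits.
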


Relying on this result, the moduli space $\mathcal{M}$ is finite-dimensional, provided that it can be endowed with a natural differentiable structure. In order to do this, a finite dimensional vector space $H^1(S^*)$ parametrizing infinitesimal variations of a solution of \eqref{eq:stromnonabelian} modulo the infinitesimal action of $\Aut_0 P$ is constructed in \cite{GRT}, using elliptic operator theory. Further, in \cite{GRT2} the Kuranishi method is applied to build a local slice to the $\Aut_0 P$-orbits in $\cP$ through a point $x \in \cP$ solving \eqref{eq:stromnonabelian}. By general theory, the local moduli space of solutions around $x$ is defined by a (typically singular) analytic subset of the slice, quotiented by the action of the isotropy group of $x$.


The construction in Section \ref{sec:gg} induces a well-defined map from the moduli space to the set of string classes
\begin{equation}
\vartheta \colon \mathcal{M} \to  H^3_{str}(P,\RR).
\end{equation}
Relying on the parallel with Maxwell theory, we call this the \emph{flux map}. We note that $H^3_{str}(P,\RR)$ is in bijection with $H^3(M,\RR)$, which corresponds to the space of infinitesimal variations of the Calabi-Yau structure $\Omega$ on $M$. Thus, potentially, restricting to the level sets of $\vartheta$ on should obtain a manifold of lower dimension (in relation to the physical problem of \emph{moduli stabilitization}). By Theorem \ref{th:natural}, each level set
$$
\vartheta^{-1}([\hat H]) \subset \mathcal{M}
$$
can be interpreted as a moduli space of solutions of the Killing spinor equations \eqref{eq:heterotic6dabstract} on the transitive Courant algebroid $E_{[\hat H]}$. On general grounds, it is expected that the moduli space $\vartheta^{-1}([\hat H])$ is related to a K\"ahler manifold, generalizing the special K\"ahler geometry in the moduli problem for polarised k\"ahlerian Calabi-Yau manifolds. Note here that $\vartheta^{-1}([\hat H])$ contains a family of moduli spaces of $\tau$-stable holomorphic vector bundles -- with varying complex structure and balanced class on $M$ --, each of them carrying a natural K\"ahler structure away from its singularities (see Section \ref{sec:gauge}).

Based on the relation with string structures, it is natural to ask which enhanced geometry can be constructed in the moduli space $\vartheta^{-1}([\hat H])$ using an integral string class
$$
[\hat H] \in H^3_{str}(P,\ZZ).
$$
This integrality condition appears naturally in the theory of $T$-duality for transitive Courant algebroids, as defined by Baraglia and Hekmati \cite{BarHek}, and it should be important for the definition of a Strominger-Yau-Zaslow version of mirror symmetry for the Strominger system \cite{Yau2005}.

\end{document}